\def\ps@pprintTitle{%
 \let\@oddhead\@empty
 \let\@evenhead\@empty
 \def\@oddfoot{}%
 \let\@evenfoot\@oddfoot}
\journal{}
\providecommand*{\Span}{\operatorname{span}}     
\providecommand{\ker}{\operatorname{ker}}
\providecommand*{\Span}[1]{\operatorname{Span}\left\{{#1}\right\}}     
\providecommand{\Supp}{\operatorname{supp}}                            
\providecommand{\supp}{\Supp}
\providecommand{\range}{\operatorname{range}}                
\providecommand{\Id}{\Op{Id}}                     
\providecommand{\diag}{\operatorname{diag}}
\newcommand{\Vf}{{\mathbf{f}}}
\newcommand{\Vg}{{\mathbf{g}}}
\newcommand{\Vu}{{\mathbf{u}}}
\newcommand{\Vv}{{\mathbf{v}}}
\newcommand{\Vy}{{\mathbf{y}}}
\newcommand{\Val}{{\bm{\alpha}}}
\newcommand{\VD}{{\mathbf{D}}}
\newcommand{\VE}{{\mathbf{E}}}
\newcommand{\VF}{{\mathbf{F}}}
\newcommand{\VI}{{\mathbf{I}}}
\newcommand{\VK}{{\mathbf{K}}}
\newcommand{\VL}{{\mathbf{L}}}
\newcommand{\VM}{{\mathbf{M}}}
\newcommand{\VU}{{\mathbf{U}}}
\newcommand{\VV}{{\mathbf{V}}}
\newcommand{\VW}{{\mathbf{W}}}
\newcommand{\VPsi}    {\mathbf{\Psi}}
\newcommand{\VPhi}    {\mathbf{\Phi}}
\newcommand{\Amat}{{\rm A}}
\newcommand{\Lmat}{{\rm L}}
\newcommand{\Pmat}{{\rm P}}
\newcommand{\Umat}{{\rm U}}
\providecommand{\Ch}{{\cal H}}
\providecommand{\CA}{{\cal A}}
\providecommand{\CB}{{\cal B}}
\providecommand{\CD}{{\cal D}}
\providecommand{\CE}{{\cal E}}
\providecommand{\CG}{{\cal G}}
\providecommand{\CH}{{\cal H}}
\providecommand{\CI}{{\cal I}}
\providecommand{\CJ}{{\cal J}}
\providecommand{\CK}{{\cal K}}
\providecommand{\CO}{{\cal O}}
\providecommand{\CS}{{\cal S}}
\providecommand{\CX}{{\cal X}}
\providecommand{\bbC}{\mathbb{C}}
\providecommand{\bbR}{\mathbb{R}}
\newcommand*{\EUSP}[2]{\left<{#1},{#2}\right>} 
\providecommand*{\N}[1]{\left\|{#1}\right\|} 
\newcommand*{\SN}[1]{\left|{#1}\right|}      
\newcommand*{\LRP}[1]{\left(#1\right)}
\newcommand*{\Op}[1]{\mathsf{#1}} 
\providecommand*{\Lp}[2][\defaultdomain]{L^{#2}({#1})}
\newcommand*{\Ltwo}[1][\defaultdomain]{\Lp[#1]{2}}
\providecommand{\Lk}{{\mathrm{L}_K}}
\providecommand{\Th}{{\mathrm{T}}}
\providecommand{\Themp}{{ \widehat{\mathrm{T}} }}
\providecommand{\Inc}{{\mathrm{S}}}
\providecommand{\hInc}{{\widehat{\mathrm{S}}}}
\providecommand{\prefilter}{{G_j}}
\providecommand{\filter}{{F_j}}
\providecommand*{\textN}[1]{\|{#1}\|}
\newcommand{\R}{{\mathbb{R}}}
\newcommand{\C}{{\mathbb{C}}}
\newcommand{\xx}{{\mathcal{X}}}
\newcommand{\hh}{{\mathcal{H}}}
\newcommand{\cG}{{\mathcal{G}}}
\newcommand{\cV}{{\mathcal{V}}}
\newcommand{\cE}{{\mathcal{E}}}
\newcommand{\al}{{\alpha}}
\newcommand{\ka}{{\kappa}}
\newcommand{\la}{{\lambda}}
\newcommand{\eps}{{\epsilon}}
\newcommand{\hpsi}{{\widehat{\psi}}}
\newcommand{\hphi}{{\widehat{\bm{\varphi}}}}
\newcommand{\hla}{{\widehat{\la}}}
\newcommand{\hv}{{\widehat{v}}}
\newcommand{\hK}{{\widehat{\VK}}}
\newcommand{\hu}{{\widehat{\Vu}}}
\DeclareMathOperator{\spn}{span}
\providecommand{\PW}{\mathbf{PW}}
\providecommand{\dom}{\operatorname{dom}}
\newcommand{\HS}{{\textnormal{HS}}}
\newcommand{\ra}[1]{\renewcommand{\arraystretch}{#1}}
\theoremstyle{plain}
	\newtheorem{thm}{Theorem}[section]
	\newtheorem{prop}[thm]{Proposition}
	\newtheorem{lem}[thm]{Lemma}
	\newtheorem{cor}[thm]{Corollary}
\theoremstyle{remark}
	\newtheorem{rmk}[thm]{Remark}
\theoremstyle{definition}
	\newtheorem{dfn}[thm]{Definition}
	\newtheorem{ex}[thm]{Example}
\begin{document}

\begin{frontmatter}

\title{Construction and Monte Carlo estimation of\\wavelet frames generated by a reproducing kernel}

\author[DIMA]{Ernesto De Vito}
\ead{devito@dima.unige.it}

\author[ZK]{Zeljko Kereta}
\ead{zeljko@simula.no}

\author[VN]{Valeriya Naumova}
\ead{valeriya@simula.no}

\author[DIBRIS,MIT]{Lorenzo Rosasco}
\ead{lorenzo.rosasco@unige.it}

\author[DIBRIS]{Stefano Vigogna}
\ead{vigogna@dibris.unige.it}

\address[DIMA]{MaLGa - DIMA - Universit\`a di Genova,
Via Dodecaneso 35, 16146 Genova, Italy}

\address[ZK]{Simula Research Laboratory,
Martin Linges Vei 25, 1364 Oslo, Norway}

\address[VN]{Machine Intelligence Department -
SimulaMet,
Pilestredet 52, 0167 Oslo, Norway}

\address[DIBRIS]{MaLGa - DIBRIS - Universit\`a di Genova,
Via Dodecaneso 35, 16146 Genova, Italy}

\address[MIT]{Center for Brains, Minds and Machines - MIT and Istituto Italiano di Tecnologia}

\begin{abstract}
We introduce a construction of multiscale tight frames on general domains.
The frame elements are obtained by spectral filtering of the integral operator associated with a reproducing kernel.
Our construction extends classical wavelets
as well as generalized wavelets on both continuous and discrete non-Euclidean structures
such as Riemannian manifolds and weighted graphs.
Moreover, it allows to study the relation between continuous and discrete frames
in a random sampling regime,
where discrete frames can be seen as Monte Carlo estimates of the continuous ones.
Pairing spectral regularization with learning theory,
we show that a sample frame tends to its population counterpart,
and derive explicit finite-sample rates on spaces of Sobolev and Besov regularity.
Our results prove the stability of frames constructed on empirical data,
in the sense that all stochastic discretizations have the same underlying limit
regardless of the set of initial training samples.
\end{abstract}

\begin{keyword}
Wavelets \sep Frames \sep Reproducing Kernel Hilbert Spaces \sep Regularization \sep Learning Theory
\MSC[2010] 42C15 \sep 42C40 \sep 65T60 \sep 46E22 \sep 47A52 \sep 68T05
\end{keyword}

\end{frontmatter}

\section{Introduction} \label{sec:intro}

Wavelet systems have long been employed in time-frequency analysis and approximation theory
to break the uncertainty principle
and resolve local singularities against global smoothness.
Nonlinear approximation over redundant families of localized waveforms
has enabled the construction of efficient sparse representations,
becoming common practice in signal processing, source coding, noise reduction, and beyond.
Sparse dictionaries are also an important tool in machine learning,
where the extraction of few relevant features can significantly enhance a variety of learning tasks,
making them scale with enormous quantities of data.
However, the role of wavelets in machine learning is still unclear,
and the impact they had in signal processing has, by far, not been matched.
One objective constraint to a direct application of classical wavelet techniques to modern data science is of geometric nature:
real data are typically high-dimensional and inherently structured,
often featuring or hiding non-Euclidean topologies.
On the other hand, a representation built on empirical samples poses an additional problem of stability,
accounted for by how well it generalizes to future data.
In this paper, expanding upon the ideas outlined in \cite{9030825},
we introduce a data-driven construction of wavelet frames
on non-Euclidean domains,
and provide stability results in high probability.

Starting from Haar's seminal work \cite{Haar1910} and since the founding contributions of Grossmann and Morlet \cite{GM84},
a general theory of wavelet transforms and a wealth of specific families of wavelets have rapidly arisen
\cite{chui92,daubechies1992ten,f2005abstract,mallat1999wavelet,mey92}, first and foremost on $\bbR^d$,
but soon thereafter also on non-Euclidean structures such as manifolds and graphs \cite{CM06,TKP12,DONG2017452,fefupe16,GP11,GBvL17,HVG11,MR3642989}.
Generalized wavelets usually consist of frames
with some kind of broad to tighter link to ideas from multi-resolution analysis.
At the very least, elements of a wavelet frame ought to be associated with locations and scales,
decomposing signals into a sum of local features in increasing resolution.
On a basic conceptual level, many of these generalized constructions
stem from a reinterpretation of the frequency domain as the spectrum of a differential operator.
Indeed, wavelets on $\R$ are commonly generated by dilating and translating a well-localized function $\psi$,
$$
 \psi_{a,b}(x) = |a|^{-1/2} \psi \left( \tfrac{x-b}{a} \right) \qquad a \ne 0 , b \in \R ;
$$
but taking the Fourier transform, they can be rewritten as
\begin{equation} \label{eq:wavelet-mercer}
 \psi_{a,b}(x) = \int |a|^{1/2} \widehat{\psi}(a\xi) e^{2\pi \imath (x-b) \xi} d\xi
 = \int G_a(\xi) \overline{v_\xi(b)} {v_\xi} (x) d\xi ,
\end{equation}
with $  G_a(\xi) = |a|^{1/2} \widehat{\psi}(a\xi) $ and $ v_{\xi} (x) = e^{2\pi \imath x\xi} $.
This allows to reinterpret the wavelet $ \psi_{a,b}(x) $  as a superposition of Fourier harmonics $v_{\xi} (x)$, modulated by a spectral filter $G_a(\xi)$.
Moreover, each $v_{\xi}$ can be seen as an eigenfunction of the Laplacian $ \Delta = -d^2/dx^2 $.
Hence, in principle, we may retrace an analogous construction whenever some notion of Laplacian is at hand.
In particular, Riemannian manifolds and weighted graphs are examples of spaces where this is possible, using the Laplace--Beltrami operator or the graph Laplacian.
A more detailed overview of related work based on these or similar ideas is postponed to Sections \ref{sec:graphs} and \ref{sec:comparison}.

Thus far, the study of generalized wavelets on non-Euclidean domains
has primarily focused on either the continuous \emph{or} the discrete setting.
It is nonetheless natural to investigate the relationship between the two.
For instance, regarding a graph as a sample of a manifold,
we may ask whether and in what sense
the frame built on the graph tends to the one on the manifold.
In this paper we present a unified framework for the construction and the comparison of continuous and discrete frames.
Returning for a moment to the real line,
let us consider the semigroup $ e^{-t \Delta} $ generated by the Laplacian.
This defines an integral operator
$$
 e^{-t \Delta}f(x) = \int K_t(x,y) f(y) dy ,
$$
with $K_t(x,y)$ being the heat kernel.
Such a representation suggests that the generalized Fourier analysis, already revisited as spectral analysis of the Laplacian, can now be translated in terms of a corresponding integral operator (see {\it e.g.} \cite{TKP12,MM08}).
With the attention shifting from the Laplacian to an integral kernel,
our idea is to recast the above constructions inside a reproducing kernel Hilbert space.
Exploiting the reproducing kernel, we will extend a discrete frame out of the given samples,
and thus compare it to its natural continuous counterpart.

Our construction yields empirical frames \smash{$ \widehat\VPsi^N $} on sets of $N$ data.
We will show that \smash{$ \widehat\VPsi^N $} converges in high probability to a continuous frame $\VPsi$ associated to a reproducing kernel Hilbert space $\hh$ as $ N \to \infty $,
thus providing a proof of its stability in an asymptotic sense.
The empirical frames \smash{$ \widehat\VPsi^N $} can be seen as Monte Carlo estimates of $\VPsi$.
Repeated random sampling will in fact produce a sequence of frames \smash{$ \widehat\VPsi^N $} on an increasing chain of finite dimensional reproducing kernel Hilbert spaces \smash{$ \widehat{\hh}_N $}
$$
 \begin{matrix}
  \widehat{\hh}_N & \subset & \widehat{\hh}_{N+1} & \subset & \cdots & \subset & \hh \\[5pt]
  \widehat\VPsi^N & & \widehat\VPsi^{N+1} & & \longrightarrow & & \VPsi 
  \end{matrix} \quad ,
$$
which approximates $\VPsi$ on $\hh$ up to a desired sampling resolution quantifiable by finite sample bounds in high probability.

One may also look at our result as a form of stochastic discretization of continuous frames.
Going from the continuum to the discrete setting is an important problem in frame theory and applications of coherent states.
Given a continuous frame of a Hilbert space, the discretization problem \cite[Chapter 17]{ali2013coherent} asks to extract a discrete frame out of it.
Originally motivated by the need of numerical implementations of coherent states arising in quantum mechanics \cite{DGM,1926NW.....14..664S},
the problem was then generalized to continuous frames \cite{ALI19931} and addressed in several theoretical efforts \cite{FR05,FG07,Gr1991}, until it found a complete yet not constructive characterization in \cite{FS17}.
Sampling the continuous frame is tantamount to sampling the parameter space on which the frame is indexed.
For a wavelet frame, this means the selection of a discrete set of scales and locations.
While the discretization of the scales can be readily obtained by a dyadic parametrization,
the difficult part is usually sampling locations, that is, the domain where the frame is defined.
How to do this is known in many cases and consists in an attentive selection of nets of well covering but sufficiently separated points.
Already sensitive in the Euclidean setting, this procedure can be hard to generalize and implement in more general geometries \cite{TKP12}.
In this respect, our Monte Carlo frame estimation provides a randomized approach to frame discretization as opposed to a deterministic sampling design.
Clearly, our Monte Carlo estimate is not solving the discretization problem in its original form,
since it defines frames only on finite dimensional subspaces.
It is rather providing an asymptotic approximate solution, computing frames on an invading sequence of subspaces $ \widehat{\hh}_N \subset \hh $.
We should also remark that, due to covering properties, standard frame discretization always entails a loosening of the frame bounds;
hence, in particular, only non-tight frames may be sampled, even when the starting continuous frame is Parseval.
As a result, signal reconstruction with respect to the discretized frame will in general require the computation of a dual frame, which is a problem on its own.
On the contrary, in our randomized construction we preserve the tightness,
albeit at the expense of a (possibly large) loss of resolution power $ \hh \setminus \widehat{\hh}_N $.

The remainder of the paper is organized as follows.
The general notation used throughout the paper is listed in Table \ref{tab:notation}.
In Section \ref{sec:graphs} we relate our main contribution
to recent constructions of wavelets on graphs.
This is both a special case and a main motivation of the general theory developed in the subsequent sections.
In Section \ref{sec:RKHS} we introduce the general framework and define the fundamental objects used in our analysis.
The focus is on kernels, reproducing kernel Hilbert spaces, and associated integral operators.
In Section \ref{sec:Frames} we present our frame construction
based on spectral calculus of the integral operator.
Our theory encompasses continuous and discrete frames within a unified formalism,
paving the way for a principled comparison of the two.
In particular, in Section \ref{sec:mcw}, interpreting discrete locations as samples from a probability distribution
we propose a Monte Carlo method for the estimation of continuous frames.
In Section \ref{sec:comparison} we  compare and contrast our approach to the existing literature.
In Section \ref{sec:stability} we prove the consistency of our Monte Carlo wavelets
and obtain explicit convergence rates under Sobolev regularity of the signals.
This is done combining techniques borrowed from the theory of spectral regularization with bounds of concentration of measure.
In Section \ref{sec:Besov} we study the convergence rates in Besov spaces.
In Section \ref{sec:conclusions} we draw our conclusions and point at some directions for future work.

\begin{table}[h]

\caption{Notation} \label{tab:notation}

\centering

\ra{1.5}

\resizebox{\columnwidth}{!}{

\begin{tabular}{l l l l}

\textsf{symbol} & \textsf{definition} & \textsf{symbol} & \textsf{definition} \\

\hline

$\langle \cdot , \cdot \rangle_\CH$, $\N{\cdot}_\CH$ & inner product and norm in a RKHS $\CH$
&
$\Pmat_S$ & orthogonal projection onto a closed subspace $S$ \\

  $\sigma(\Amat)$ & spectrum of a linear operator $\Amat$
  &
  $\supp(\rho)$ & support of a measure $\rho$ \\

$\|\cdot\|$ & operator norm
&
$\langle \cdot , \cdot \rangle_{\rho}$, $\N{\cdot}_{\rho}$ & inner product and norm in $\Ltwo$ \\

$\|\cdot\|_{\text{HS}}$ & Hilbert-Schmidt norm
&
$\delta_x$ & Dirac measure at $x$ \\

$ v \otimes w $ & the operator $ u \in \hh \mapsto \langle u , v \rangle_\hh w \in \hh $
&
$\Vv[i]$ & $i$-th component of a vector $\Vv$ \\

$\Span\{S\}$ & linear span of a set $S$
&
$\VM[i,j]$ & $(i,j)$-th entry of a matrix $\VM$ \\

$S^\perp$ & orthogonal complement of a set $S$
&
$\VM^+$ & pseudoinverse of a matrix $\VM$ \\

$\overline{S}$ & topological closure of a set $S$
&
$X \lesssim Y$ & $ X \le C Y $ for some constant $C>0$ \\

$ S_1 \oplus S_2 $ & direct sum of two subspaces $S_1$ and $S_2$
&
$X \asymp Y$ & $ X \lesssim Y $ and $ Y \lesssim X $ \\

\hline

\end{tabular}

}

\end{table}

\section{Wavelets on graphs and their stability} \label{sec:graphs}

In this section we discuss how the framework introduced in the paper
may be used to study the stability of typical constructions of wavelets on graphs.
We first recall a few elementary concepts about graphs
and set up some notation.
After that, we outline a natural construction of wavelets
based on the graph Laplacian,
and observe that such a construction may be recast in terms of a reproducing kernel.
Finally, we explain how this allows to establish the stability of wavelet frames
in a suitable random graph model.

\subsection{Wavelets on graphs}

We start with some basics of spectral graph theory.
We only review what is strictly necessary for our purposes,
and refer to \cite{Chung1997} for further details.
\begin{dfn}[weighted graph]
An \emph{undirected graph} is a pair $ \cG = ( \cV , \cE ) $,
where $ \cV $ is a finite discrete set of \emph{vertices} $ \cV := \{ x_1 , \dots , x_N \} $,
and $ \cE $ is a set of unordered pairs $ \cE \subset \{ \{ x_i , x_k \} : x_i , x_k \in \cV \} $, called \emph{edges}.
A \emph{weighted} (undirected) graph is an undirected graph with an associated \emph{weight function}
$ w : \cE \to (0,+\infty) $.
\end{dfn}

Arguably, one of the most remarkable facts about graphs is that
it is possible to define on such a minimal structure a consistent notion of Laplacian.
Functions on the graph, more precisely functions $ f : \cV \to \R $,
can be identified with vectors $ \Vf \in \R^N $ by $ \Vf_i := f(x_i) $,
and equipped with the standard inner product $ \Vf^\top \Vg $ for $ \Vf , \Vg \in \R^N $.
As an operator acting on functions,
the graph Laplacian is thus defined by a matrix $ \VL \in \R^{N \times N} $.
\begin{dfn}[graph Laplacian]
Let $ \cG = ( \cV , \cE , w ) $ be a weighted graph.
The \emph{weight matrix} $ \VW := [ w_{i,k} ]_{i,k=1}^N $
is defined by $ w_{i,k} := w(\{x_i,x_k\}) $ for $ \{ x_i , x_k \} \in \cE $, and $ w_{i,k} := 0 $ otherwise.
The \emph{degree matrix} $ \VD := \diag(d_1,\dots,d_N) $ is defined by
$ d_i := \sum_{k=1}^N w_{i,k} $.
The \emph{unnormalized graph Laplacian} is the matrix
$$
 \VL := \VD - \VW .
$$
\end{dfn}
Assuming that $ \cG $ is connected, hence $ d_i > 0 $ for all $ i = 1 , \dots , N $,
the \emph{symmetric normalized} graph Laplacian is
$
 \VL' := \VD^{-1/2} \VL \VD^{-1/2} = \VI - \VD^{-1/2} \VW \VD^{-1/2} .
$
Several other variants are considered in the literature,
including the \emph{random walk} normalized graph Laplacian
$ \VD^{-1} \VL = \VI - \VD^{-1} \VW $, which is not symmetric but conjugate to $\VL'$.
The operators $\VL$, $\VL'$ and further normalizations
result from different definitions of Hilbert structures
on the spaces of functions on $\cV$ and $\cE$ \cite{3213}.
While each operator gives rise to a different analysis,
the choice of one or the other does not have formal consequences in our construction,
hence, for simplicity, we will generically use $\VL$.

The matrix $\VL$ is positive semi-definite,
hence it admits an orthonormal basis of eigenvectors
with non-negative eigenvalues,
customarily sorted in increasing order:
$$
 \VL \Vu_i = \xi_i \Vu_i , \quad i = 0 , \dots , N-1 , \qquad 0 = \xi_0 \le \xi_1 \le \cdots \le \xi_{N-1} .
$$
The spectrum of $\VL$ reveals several important topological properties of the graph.
In particular, a graph has as many connected components as zero eigenvalues,
with eigenfunctions being piecewise constant on the components.
We assume from now on that the graph is connected, hence $ \xi_1 > 0 $.

The graph Laplacian can be seen as a discrete analog of the continuous Laplace operator.
This  analogy justifies
the interpretation of the eigenvectors $ \Vu_i $ as Fourier harmonics,
and the corresponding eigenvalues $ \xi_i $ as frequencies.
Accordingly, the \emph{graph Fourier transform} is defined by
 $$
  \VF := [ \Vu_1 \cdots \Vu_N ]^\top , \qquad [\VF \Vf]_i := \Vu_i^\top \Vf .
 $$
 Note that the indexing is hiding that $ \VF \Vf $ should be thought as a function on the frequencies $ \xi_i $.
 Carrying the analogy forward, a family of \emph{graph wavelets}
can be constructed by spectral filtering of the Fourier basis as follows.
Let $ \{ H_j \}_{j\ge0} $ be a family of functions $ H_j : [0,+\infty) \to [0,+\infty) $ satisfying
\begin{align*}
 & \sum_{j\ge0} H_j(\xi)^2 = 1 \quad \text{for all } \xi \in [0,+\infty) , \\
 & \# \{ H_j : H_j(\xi_i) \ne 0 \} < \infty \quad \text{for } i  = 1 , \dots , N .
\end{align*}
Then, the family
\begin{equation} \label{eq:GW}
 \varphi_{j,k} := \sum_{i=1}^N H_j(\xi_i) \Vu_i[k] \Vu_i \qquad j\geq0,\, k = 1,\dots,N
\end{equation}
defines a Parseval frame on $\cG$ \cite[Theorem 2]{GBvL17}.

Let $ \hh_\cG := \spn\{\Vu_0\}^\perp = \spn\{\Vu_1,\dots,\Vu_{N-1}\} $
the space of all non-constant signals on $\cG$.
The graph Laplacian defines an inner product on $ \hh_\cG $ by $ \langle \Vf , \Vg \rangle_\cG := \Vf^\top \VL \Vg $,
which is invariant under graph isomorphisms.
The Hilbert space $ \hh_\cG $ has \emph{reproducing kernel}
$$
 \VK := \VL^+ .
$$
The matrix $\VK$ on $\hh_\cG$ has same eigenvectors $ \Vu_1 , \dots , \Vu_{N-1} $ as $\VL$, and eigenvalues
$$
 \la_1 = \xi_1^{-1} \ge \la_2 = \xi_2^{-1} \ge \cdots \ge \la_{N-1} = \xi_{N-1}^{-1} .
$$
Therefore, wavelets \eqref{eq:GW} can be as well defined starting from the spectral decomposition of the reproducing kernel $\VK$,
rather than the Laplacian $\VL$.
Conversely, given any reproducing kernel $\VK$,
a frame may be constructed,
without any reference to a Laplacian matrix.
Indeed, this is the point of view taken in this paper.

Besides the equivalence in defining the frame,
starting from a kernel implies some technical differences,
but also opens to new theoretical potential.
First, note that the spectrum gets flipped,
hence the eigenvalues of the kernel should be thought as inverses of Fourier frequencies.
This seemingly irrelevant remark is actually important to correctly interpret
the definitions of Sobolev and Besov spaces given in Section \ref{sec:Besov}.
Moreover, in light of this,
the scale $\tau$ in \eqref{eq:hftauN} can be understood as a frequency threshold,
and the regularization $\tau^{-1}$ in the regression problem \eqref{eq:RLS}
as keeping the low frequencies.
Reasoning in reproducing kernel Hilbert spaces also suggests
further definitions of filtering beyond typical band-pass of Example \ref{ex:classical_filters},
employing regularization techniques from inverse problems, as exemplified in Table \ref{tab:list_of_gs}.
Lastly, reproducing kernels naturally extend the wavelet functions out of the graph vertices,
making possible to analyze the stability of the graph wavelet frame
for different random realizations of the graph.
We elaborate on this in the next section.

\subsection{Stability of wavelets on random graphs}

By virtue of their generality, graphs can be used to model a variety of discrete objects with pairwise relations,
as well as to approximate complex geometries in continuous domains.
In both cases, complexity and uncertainty are often handled by assuming an underlying random model
and studying statistics and asymptotic behavior of relevant variables.
In particular, neighborhood graphs are often used to approximate the Riemannian structure of a manifold.
In a neighborhood graph, vertices are sampled at random from the manifold,
and edges are drawn connecting vertices in suitable neighborhoods,
such as $k$-nearest neighborhoods or $\eps$-radius balls in the ambient Euclidean distance,
or even putting weights using a global (possibly truncated) kernel function.

The convergence of the graph Laplacian to the Laplace--Beltrami operator
has been studied and quantified in several settings,
both as a pointwise \cite{BELKIN20081289,3213,SINGER2006128,MR2387773,10.5555/3104322.3104459} and as a spectral limit \cite{RBD10,vonlux2008,NIPS2006_5848ad95,singer2017,MR4130541}.
On the other hand, wavelets have been generalized to continuous non-Euclidean domains, notably Riemannian manifolds and spaces of homogenous type \cite{TKP12,fefupe16,GP11},
and while the conceptual ingredients remain similar,
the convergence of graph to manifold wavelets is hardly studied.
We next describe how our theory provides a way to fill this gap.

Suppose we have a graph $\cG$ with vertices $\{x_1,\dots,x_N\}$ and a positive definite kernel matrix $\hK$.
For instance, the matrix $N\hK$ may be the kernel associated with the graph Laplacian.
Computing the eigenvalues $\hla_i$ and eigenvectors $\hu_i$ of $\hK$,
we can define, in analogy with \eqref{eq:GW}, the family
\begin{equation} \label{eq:frame-graph}
\hphi_{j,k} := \sum_{i=1}^N F_j(\hla_i) {{\hu_i[k]}} \hu_i \qquad j\geq0 , k = 1,\dots,N,
\end{equation}
for a suitable spectral filter $F_j(\la)$.
By Proposition \ref{prop:frame}, \eqref{eq:frame-graph} defines a Parseval frame on $\cG$.
Now, suppose that the vertices of our graph are sampled from a space $\xx$
with probability distribution $\rho$ and reproducing kernel $K$
satisfying the assumptions of Sections \ref{sec:RKHS} and \ref{sec:Frames}.
Furthermore, suppose that the kernel matrix $\hK$ is given by
$$
 \hK[i,k] = N^{-1} K(x_i,x_k) .
$$
For example, the space $\xx$ may be a compact Riemannian manifold,
in which case we could consider the heat kernel associated with the Laplace--Beltrami operator,
and regard the kernel matrix as a discretization of the integral operator.
As a discrete example,
one may also think of $\xx$ as a supergraph of $\cG$.
Thanks to Proposition \ref{prop:frame}, the family of Monte Carlo wavelets
\begin{equation*}
 \hpsi_{j,k} (x) := \sum_i G_j(\hla_i) \overline{\hv_i(x_k)} \hv_i(x) \qquad j\geq0,\, k = 1,\dots,N
\end{equation*}
is a Parseval frame isomorphic to \eqref{eq:frame-graph}.
Crucially, in this new representation, the frame functions are well-defined both on and off the graph $\cG$,
and thus the convergence of the frame can be studied on a test signal $ f : \xx \to \R $,
as discussed in Section \ref{sec:stability}.
The stability of the graph wavelets \eqref{eq:frame-graph} can therefore be established by an application of Theorem \ref{thm:error} or \ref{thm:Besov_cons}.

Starting from the next section,
we develop our theory in greater generality,
but always bearing in mind the motivating setting just discussed.

\section{Preliminaries} \label{sec:RKHS}

In this section we prepare the technical ground on which our results will built (see also \cite{RBD10}).
Let $\CX$ be a locally compact, second countable topological space endowed with a Borel probability measure $\rho$.
Given a continuous, positive semi-definite kernel
$$
K : \CX \times \CX \to \C ,
$$
we denote the associated reproducing kernel Hilbert space (RKHS) by
\begin{equation*}
 \CH 
 := \overline{\spn} \{ K_x : x\in\CX\} ,
\end{equation*}
where $ K_x := K(\cdot,x) \in \CH$, and the closure is taken with respect to the inner product $ \langle K_x , K_y \rangle_{\CH} := K(y,x)$.
Elements of $\CH$ are continuous functions satisfying the following reproducing property:
\begin{equation}\label{eqn:reproducing_property}
f(x) = \EUSP{f}{K_x}_\CH \quad \text{for all } f \in \CH.
\end{equation}
The space $\CH$ is separable, since $\CX$ is separable.
We further assume $K$ is bounded on $\CX$ and denote
\begin{equation*}
\kappa := \sup_{x \in \CX} \sqrt{K(x,x)} = \sup_{x \in \CX} \|
K_x\|_\CH  < \infty ,
\end{equation*}
which implies that $\CH$ is continuously embedded into the space of bounded continuous functions  on $\CX$.

We define the (non-centered) covariance operator $\Th : \CH \to \CH $ by
\begin{equation}\label{eqn:Th_defn}
\Th := \int_\CX K_x \otimes K_x \,d\rho(x) ,
\end{equation}
where the integral converges strongly.
The operator $\Th$ is positive and trace-class (therefore compact) with $ \sigma(\Th) \subset [0,\kappa^2] $.
Hence, the spectral theorem ensures the existence of a countable orthonormal set $ \{ v_i\}_{i \in \CI_\rho \cup \CI_0} \subset \CH $
and a sequence $ ( \la_i )_{i\in\CI_\rho} \subset (0,\kappa^2] $
such that
\begin{equation*}
 \Th v_i = \begin{cases}
              \lambda_i v_i & i\in \CI_\rho \\
              \hspace{1pt} 0 & i\in \CI_0
             \end{cases} .
\end{equation*}

Let  $\Ltwo$ be the space of square-integrable functions on $\CX$ with respect to the measure $\rho$,
and denote $ \CX_\rho := \supp(\rho)$.
We define the integral operator $\Lk: \Ltwo\rightarrow \Ltwo$ by
$$
\Lk F (x) := \int_\CX K(x,y) F(y)\, d\rho(y) .
$$ 
The spaces $\CH$ and $\Ltwo$ and the operators $\Th$ and $\Lk$ are related through the inclusion operator $\Inc: \CH \rightarrow \Ltwo$ defined by
$$
\Inc f(x) := \EUSP{f}{K_x}_{\CH} .
$$ 
The adjoint operator $\Inc^*:  \Ltwo\rightarrow  \CH$ acts as the strongly converging integral
$$
\Inc^* F= \int_\CX F(x) K_x\,d\rho(x) .
$$
We have $ \Th = \Inc^* \Inc $ and $ \Lk = \Inc \Inc^* $.
Hence, $\sigma(\Th)\backslash\{0\}=\sigma(\Lk)\backslash\{0\}$,
and the eigenfunctions $ \{ u_i \}_{i\in\CI_\rho\cup\CI_0} \subset \Ltwo $ of $\Lk$ satisfy
\begin{equation} \label{eq:svd}
 \Inc v_i = \begin{cases}
                  \sqrt{\lambda_i} u_i & i \in \CI_\rho \\
                  0 & i\in\CI_0
                 \end{cases} .
\end{equation}
Mercer's theorem gives
\begin{equation} \label{eq:Mercer}
\begin{aligned}
& K(x,y) = \sum_{i \in \CI_\rho \cup \CI_0} \overline{v_i(x)} v_i(y) \quad \text{for } x,y \in \CX , \\
& K(x,y) = \sum_{i \in \CI_\rho} \la_i \overline{u_i(x)} u_i(y) \quad \text{for } x,y \in \CX_\rho ,
\end{aligned}
\end{equation}
where the series converge absolutely and uniformly on compact subsets.

Defining
\begin{equation*}
 \CH_\rho := \overline{\spn} \{ K_x : x \in \CX_\rho\} = \overline{\spn} \{ v_i : i \in \CI_\rho \} ,
\end{equation*}
where the closure is taken in $\hh$,
we can identify $\CH_\rho$ as a (non-closed) subspace of $\Ltwo$.
The closure of $\CH_\rho$ in $\Ltwo$ is
$$
\overline{\CH}_\rho : = \overline{\spn} \{ u_i : i \in \CI_\rho \} ,
$$
and the following decompositions hold true:
\[
\CH = \CH_\rho \oplus \ker \Inc , \qquad \Ltwo =
  \overline{\CH}_\rho \oplus \ker \Inc^\ast .
 \]
For $f \in \CH_\rho$, we can relate the norms in $\CH$ and $\Ltwo$  as
\begin{equation} \label{eq:sqrt(T)f}
\textN{f}_{\rho} = \| \sqrt \Th f \|_\CH .
\end{equation}
In other words, $\sqrt{\Th}$ induces an isometric isomorphism between $\overline{\CH}_\rho$ and ${\CH}_\rho$.
We define the partial isometry $\Umat:\CH\to\Ltwo$, such that $\Umat \CH_\rho = \overline{\CH}_\rho$, by
\begin{equation*}
\Umat f = \sum_{i\in\CI_\rho} \EUSP{f}{v_i}_{\CH} u_i .
\end{equation*}

As examples of this setting, we may think of $\CX$ as $\bbR^d$, or a non-Euclidean domain such as a compact connected Riemannian manifold or a weighted graph.
In these cases, we can take $K$ as the heat kernel associated with
the proper notion of Laplacian, be it the Laplace--Beltrami operator or
the graph Laplacian.

\section{Wavelet frames by reproducing kernels} \label{sec:Frames}

We now build Parseval frames in the RKHS $\CH$ and in $\Ltwo$.
Our construction is centered around eigenfunctions of the integral operator \eqref{eqn:Th_defn} and filters on the corresponding eigenvalues.
Continuous frames emerged in the mathematical physics community from the study of coherent states,
as a generalization of the more common notion of a discrete frame \cite{ali2013coherent,f2005abstract}.

\begin{dfn}[frame]
Let $\CH$ be a Hilbert space, $\CA$ a locally compact space and $\mu$ a Radon measure on $\CA$ with $\supp \mu = \CA$.
A family $\VPsi =\{ \psi_a: a\in \CA\}\subset \CH$ is called a \emph{frame} for $\CH$ if there exist constants $0<A\leq B<\infty$ such that, for every $f\in\CH$, we have
\[ A \N{f}_\CH^2 \leq \int_{\CA} \SN{\EUSP{f}{\psi_a}_\CH}^2 d\mu(a) \leq B \N{f}_\CH^2.\]
We say that $\VPsi$ is \emph{tight} if $A=B$, and \emph{Parseval} if $A=B=1$. 
\end{dfn}
In the above definition it is implicitly assumed that the map $a \mapsto \EUSP{\Psi_a}{f}_\hh$ is measurable for all $f\in\hh$.
It is important to note that this definition depends on the choice of the measure $\mu$. 
In the case of a counting measure,
we recover the standard definition of discrete frame.

\subsection{Filters} \label{sec:filters}

To construct our wavelet frames, we first need to define filters,
{\it i.e.}
functions acting on the spectrum of $\Th$ that satisfy a partition of unity condition.
\begin{dfn}[filters]\label{defn:filter}
A family $\{\prefilter\}_{j\geq0} $ of measurable functions $\prefilter : \break [0,+\infty) \to [0,+\infty)$  such that
    \begin{equation}\label{eqn:part_unity}
      \lambda\sum_{j\geq0} \prefilter(\lambda)^2  = 1 \quad \text{for all } \lambda\in(0,\kappa^2]
    \end{equation}
    is called a family of \emph{filters}.
\end{dfn}
\noindent By the spectral theorem, $\prefilter(\Th)$ is a
(possibly unbounded) positive operator on $\CH$ such that \(\sigma(\prefilter(\Th))= \prefilter(\sigma(\Th)),\)
with domain of definition 
\begin{equation*}
 \CD_j := \Big\{f\in\CH : \sum_{i\in \CI_\rho \cup \CI_0}  \prefilter(\lambda_i)^2 \,\SN{\EUSP{f}{v_i}_{\CH}}^2 <\infty\Big\} .
\end{equation*}
It follows that
\begin{equation*}
\CD:= \Span\{ v_i:i\in \CI_\rho \cup \CI_0\}\subset \CD_j \quad \text{for all } j \ge 0 ,
\end{equation*}
and
\begin{equation*}
\prefilter (\Th) v_i =
\begin{cases}
  \prefilter (\lambda_i)  v_i, & i\in \CI_\rho \\
  \prefilter (0) v_i,  &  i\in \CI_0
\end{cases}.
\end{equation*}

An easy way to define filters is by differences of suitable spectral functions.
\begin{dfn}[spectral functions]
A family
$\{g_j\}_{j\geq0} $  of measurable functions
$g_j : [0,\infty) \to [0,\infty)$ satisfying
\begin{equation} \label{eqn:g}
 0 \le g_j \le g_{j+1} , \qquad \lim_{j\to\infty} \lambda g_j(\lambda) = 1 \quad \text{for all } \lambda\in(0,\kappa^2]
\end{equation}
is called a family of \emph{spectral functions}.
\end{dfn}
Given a family of spectral functions $\{g_j\}_{j\geq0} $, filters $\{\prefilter\}_{j\geq0} $ can be obtained setting
\begin{equation} \label{eqn:Gg}
 G_0(\lambda) := \sqrt{g_0(\lambda)} , \qquad G_{j+1} (\lambda) := \sqrt{ g_{j+1}(\lambda) - g_j(\lambda) } \quad \text{for } j\geq 0.
\end{equation}
The filters thus defined give rise to a telescopic sum:
\begin{equation} \label{eq:sumG=g}
\sum_{j\leq \tau} \prefilter(\lambda)^2=g_\tau(\lambda) .
\end{equation}
Taking the limit for $\tau\rightarrow\infty$, condition \eqref{eqn:part_unity} is satisfied thanks to \eqref{eqn:g}. 
Conversely, starting from a family of filters $\{G_j\}_{j\geq0}$, we can define spectral functions $\{g_j\}_{j\geq0} $ by
$$
g_j(\lambda):=\sum_{\ell\le j} G_\ell(\lambda)^2 \quad \text{for } j \ge 0 ,
$$
which enjoys \eqref{eqn:g} due to \eqref{eqn:part_unity}.
Therefore, the notion of filter and that of spectral function are equivalent,
and we will refer to them interchangeably.

The definition in \eqref{eqn:Gg} allows to find a wealth of filters by tapping
into regularization theory \cite{enhane96}.
In the forthcoming analysis, we will use the following notion of qualification.
\begin{dfn}[qualification]
The \emph{qualification} of a spectral function $g_j:[0,\infty)\rightarrow[0,\infty)$
is the maximum constant $ \nu \in (0,\infty] $ such that
\begin{equation*} 
\sup_{\lambda\in(0,\kappa^2]} \lambda^\nu\SN{1-\lambda g_j(\lambda)}\leq C_\nu j^{-\nu} \quad \text{for all } j \ge 0 ,
\end{equation*}
where the constant $C_\nu$ does not depend on $j$.
\end{dfn}
In the theory of regularization of ill-posed inverse problems \cite{enhane96},
the qualification represents the limit within which a regularizer may exploit the regularity of the true solution.
In particular, methods with finite qualification suffer from the so-called \emph{saturation effect}.

Some standard examples of spectral functions, together  with their qualifications, are listed in Table \ref{tab:list_of_gs}.

\begin{table}[H]
\caption{Spectral regularizers and their qualifications.
Landweber iteration 
and Nesterov acceleration 
require $ \gamma < 1 / \kappa^2 $ and $ \beta \geq 1 $.
 In heavy ball, $ \alpha_j,\, \beta_j $ are suitably selected sequences depending on $\nu$,
 where $\nu$ is any positive real
 (see \cite{Pagliana2019ImplicitRO}).
}
\label{tab:list_of_gs}
\centering
\ra{2}
\begin{tabular}{l c c}
\multicolumn{1}{c}{\textsf{method}} & $g_j(\lambda)$ & \textsf{qualification} \\
\hline
{Tikhonov regularization} & $\dfrac{1}{\lambda + 1/j} $ & $1$ \\
{iterated Tikhonov} ($m$ iterations) & $ \dfrac{ (\lambda + 1/j)^m - (1/j)^m }{\lambda(\lambda + 1/j)^m } $ & $m$ \\
{Landweber iteration} & $ \frac{1}{\lambda} ( 1 - (1 - \gamma \lambda)^{j} ) $ & $\infty$ \\
{asymptotic regularization} & $ \frac{1}{\lambda} ( 1 - \exp(- j\lambda) ) $ & $\infty$ \\
 {heavy ball ($\nu$-method)} &  $(1-\alpha_{j}\lambda+\beta_{j})g_{j-1}(\lambda)-\beta_{j}g_{j-2}(\lambda)+\alpha_{j}$ & $\nu$\\
 {Nesterov acceleration} & $ (1-\gamma\lambda)\big(g_{j-1}(\lambda)+\frac{j-2}{j-1+\beta}(g_{j-1}(\lambda)-g_{j-2}(\lambda)\big)+\gamma$ & $\nu\geq1/2$\\
\hline
\end{tabular}
\end{table}

Additional examples of admissible filters widely used in the construction of wavelet frames (see \emph{e.g.} \cite{TKP12,fefupe16})
are given by the following:
\begin{ex}[localized filters] \label{ex:classical_filters}
Let $ g \in C^\infty([0,\infty)) $
such that $ \supp(g) \subset (2^{-1},\infty) $, $ 0 \le g \le 1 $, and $ g(\la) = 1 $ for all $ \la \ge 1 $.
Define
$$
\la g_{j}(\la) := g(2^j\la) .
$$
Then the family $ \{ g_j \}_{j\ge 0} $ satisfies the properties \eqref{eqn:g}.
Furthermore, the corresponding filters \eqref{eqn:Gg} are localized, meaning that,
defining $ F_j(\la) := \sqrt{\la} G_j(\la) $, we have
$$
 \supp( F_0 ) \subset (2^{-1},\infty) , \qquad  \supp( F_j ) \subset ( 2^{-j-1} , 2^{-j+1} ) \quad \text{for } j \ge 1 .
$$
\end{ex}

\subsection{Frames} \label{sec:frames}

We are now ready to define our wavelet frames.
{We first form frame elements in $\CH$, and then use the partial isometry $\Umat:\CH\to\Ltwo$ to obtain frames in $\Ltwo$.}
\begin{dfn}[wavelets]\label{defn:frames}
Let $\{\prefilter\}_{j\geq0}$ be a family of filters as in Definition \ref{defn:filter},
and assume
\begin{equation} \label{eq:KxDj}
 K_x \in \CD_{j} \quad \text{for all } j \geq 0 \text{ and almost every } x\in \CX_{\rho} .
\end{equation}
We define the families of \emph{wavelets}
$$
\VPsi:=\{\psi_{j,x}: j\geq0,x\in\CX_\rho\}\subset \CH , \qquad
\VPhi:=\{\varphi_{j,x}: j\geq0,x\in\CX_\rho\}\subset \Ltwo ,
$$
where
\begin{align}\label{eqn:frame_defn}
  \psi_{j,x} :=\prefilter(\Th)K_x , \qquad
  \varphi_{j,x} := \Umat \prefilter(\Th)K_x  \quad \text{for } j\geq0 \text{ and } x\in \CX_\rho. 
 \end{align}
\end{dfn}
Observe that, since $ \psi_{j,x} $ and $ \varphi_{j,x} $ are defined for $ x \in \CX_\rho $,
we actually have $ \VPsi \subset \CH_\rho \subset \CH $,
and  $ \VPhi \subset \overline{\CH}_\rho \subset \Ltwo $.
In particular, the orthogonality of $\CH_\rho$ and $\ker\Inc$ entails $ \EUSP{K_x}{G_j(T)v_i}_\CH = 0 $ for all $ i \in \CI_0 $.
By the reproducing property \eqref{eqn:reproducing_property},
condition \eqref{eq:KxDj} is thus equivalent to  
\begin{equation*} 
\sum_{i\in \CI_\rho} \prefilter(\lambda_i)^2\SN{ v_i(x) }^2 <\infty \quad 
  \text{for all }  j\geq0 \text { and almost every } x\in \CX_{\rho}.
\end{equation*}
If $\prefilter$ is a bounded function, then $\prefilter(\Th)$ is a bounded operator, hence $\CD_j=\CH$.
In this case, which includes the spectral functions listed in Table
\ref{tab:list_of_gs}, condition \eqref{eq:KxDj} is trivially satisfied.

Using the spectral decomposition of $\prefilter(\Th)$
and the reproducing property,
we obtain
\begin{equation} \label{eqn:psikx_series}
\psi_{j,x}(y) = \sum_{i\in \CI_\rho} \prefilter(\lambda_i) \overline{v_i(x)} v_i(y) , \quad
\varphi_{j,x}(y) = \sum_{i\in \CI_\rho} \sqrt{\lambda_i}\prefilter(\lambda_i) \overline{u_i(x)}\, u_i(y) .
\end{equation}
These expressions allow to interpret $\VPsi$ and $\VPhi$ as families of wavelets, in the sense of \eqref{eq:wavelet-mercer}.
We interpret $x$ as the location and $j$ as the scale parameter;
the functions $K_x$ localize the signal in space,
whereas the filters $\prefilter$ regularize or localize in frequency.
Note also the analogy with \eqref{eq:Mercer},
in the light of which \eqref{eqn:psikx_series} may be seen as a filtered Mercer representation.

With the following proposition we show that \eqref{eqn:frame_defn} defines Parseval frames.
\begin{prop}\label{prop:frame}
Assume the setting in Section \ref{sec:RKHS},
and let $ \VPsi , \VPhi $ be defined as in Definition \ref{defn:frames}.
Then, for every $f\in \CH$ we have
\begin{equation}\label{eqn:frame_prop_H}
  \sum_{j\geq0}\int_{\CX}  \SN{\EUSP{f}{\psi_{j,x}}_{\CH}}^2\,
    d\rho(x) =  \big\|{\Pmat_{\CH_\rho}f}\big\|_\CH^2,
\end{equation}
and for any $F\in \Ltwo$  we have
\begin{equation} \label{eqn:frame_prop_L2}
  \sum_{j\geq0}\int_{\CX}  \big\lvert{\EUSP{F}{\varphi_{j,x}}_\rho}\big\rvert^2\,
    d\rho(x) =  \big\|{\Pmat_{\overline{\CH}_\rho} F}\big\|_{\rho}^2.
\end{equation}
\end{prop}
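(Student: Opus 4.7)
The plan is to diagonalize both sides of \eqref{eqn:frame_prop_H} against the orthonormal eigenbasis $\{v_i\}_{i \in \CI_\rho \cup \CI_0}$ of $\Th$ provided in Section \ref{sec:RKHS}, reducing the identity to the scalar partition-of-unity condition \eqref{eqn:part_unity}. The $\Ltwo$-statement \eqref{eqn:frame_prop_L2} will then follow as a corollary via the partial isometry $\Umat$.

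First I would reduce to the case $f \in \CH_\rho$: since $K_x \in \CH_\rho$ for $x \in \CX_\rho$ and $\CH_\rho$ is a spectral invariant subspace of $\Th$ (hence of $G_j(\Th)$), we have $\psi_{j,x} \in \CH_\rho$, and replacing $f$ by $\Pmat_{\CH_\rho} f$ does not change the left-hand side. Writing $f = \sum_{i \in \CI_\rho} f_i v_i$ with $f_i := \EUSP{f}{v_i}_\CH$, and using that $v_i$ vanishes $\rho$-a.e. for $i \in \CI_0$ (because $v_i \in \ker \Inc$), the reproducing kernel decomposes as $K_x = \sum_{i \in \CI_\rho} \overline{v_i(x)} v_i$ in $\CH$ for $\rho$-a.e.\ $x$. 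Under hypothesis \eqref{eq:KxDj}, spectral calculus then yields $\psi_{j,x} = \sum_{i \in \CI_\rho} G_j(\lambda_i) \overline{v_i(x)} v_i$ (convergent in $\CH$), and continuity of the inner product gives
\[ \EUSP{f}{\psi_{j,x}}_\CH = \sum_{i \in \CI_\rho} G_j(\lambda_i) f_i v_i(x). \]

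The central step is then the Parseval-type identity
\[ \int_\CX \bigl\lvert \EUSP{f}{\psi_{j,x}}_\CH \bigr\rvert^2 \, d\rho(x) = \sum_{i \in \CI_\rho} \lambda_i G_j(\lambda_i)^2 \lvert f_i \rvert^2, \]
which rests on the orthogonality relation $\int_\CX v_i \overline{v_k} \, d\rho = \lambda_i \delta_{ik}$ for $i, k \in \CI_\rho$ (a direct consequence of \eqref{eq:svd} and the orthonormality of $\{u_i\}_{i \in \CI_\rho}$ in $\Ltwo$). To justify this rigorously when $f$ may lie outside the domain of the possibly unbounded operator $G_j(\Th)$, I would truncate to finite partial sums $h^{(N)}(x) := \sum_{i=1}^{N} G_j(\lambda_i) f_i v_i(x)$, apply the orthogonality to these, and pass to the limit by monotone convergence (the integrals $\int \lvert h^{(N)} \rvert^2 d\rho$ are non-decreasing in $N$), so that both sides of the identity may consistently take the value $+\infty$. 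Summing over $j$, interchanging with the $i$-sum by Tonelli, and applying the partition of unity \eqref{eqn:part_unity} in the form $\lambda_i \sum_j G_j(\lambda_i)^2 = 1$ for every $\lambda_i \in (0, \kappa^2]$, the right-hand side collapses to $\sum_{i \in \CI_\rho} \lvert f_i \rvert^2 = \N{\Pmat_{\CH_\rho} f}_\CH^2$, which is \eqref{eqn:frame_prop_H}.

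For \eqref{eqn:frame_prop_L2}, the partial isometry $\Umat: \CH \to \Ltwo$ restricts to an isometric isomorphism $\CH_\rho \to \overline{\CH}_\rho$ whose adjoint $\Umat^*$ satisfies $\Umat^* F \in \CH_\rho$ and $\N{\Umat^* F}_\CH = \N{\Pmat_{\overline{\CH}_\rho} F}_\rho$. Since $\varphi_{j,x} = \Umat \psi_{j,x}$, we have $\EUSP{F}{\varphi_{j,x}}_\rho = \EUSP{\Umat^* F}{\psi_{j,x}}_\CH$, and \eqref{eqn:frame_prop_L2} follows immediately by applying \eqref{eqn:frame_prop_H} to $f := \Umat^* F$. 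The main delicate point I anticipate is keeping the distinction between formal eigenfunction expansions and genuine convergence in $\CH$ or $\Ltwo$, particularly handling the unboundedness of $G_j(\Th)$; the truncation-and-monotone-convergence device above resolves this because, once squared moduli are taken, every quantity is non-negative and all identities make sense as equalities in $[0, +\infty]$.
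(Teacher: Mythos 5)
Your proposal is correct in substance and arrives at the same core computation as the paper: both reduce \eqref{eqn:frame_prop_H} to the identity $\int_\CX \SN{\EUSP{f}{\psi_{j,x}}_\CH}^2 d\rho(x) = \sum_{i\in\CI_\rho}\lambda_i G_j(\lambda_i)^2\SN{\EUSP{f}{v_i}_\CH}^2 = \EUSP{\Th G_j(\Th)^2 f}{f}_\CH$, after which Tonelli and the partition of unity \eqref{eqn:part_unity} finish the argument, and both obtain \eqref{eqn:frame_prop_L2} from \eqref{eqn:frame_prop_H} via the partial isometry $\Umat$. Where you diverge is the device for handling general $f$: the paper first proves the identity for $f$ in the dense subspace $\CD=\Span{v_i:i\in\CI_\rho\cup\CI_0}$, where $\prefilter(\Th)$ can be moved onto $f$ by self-adjointness, and then extends to all of $\CH$ via the closed-graph argument of Lemma \ref{dense}; you instead expand $\psi_{j,x}$ in the eigenbasis for arbitrary $f$ and truncate, which avoids the appendix lemma entirely. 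One step of your version needs repair, though: the functions $\SN{h^{(N)}(x)}^2$ are not pointwise non-decreasing in $N$, so the monotone convergence theorem does not apply to them --- only the integrals $\int_\CX\SN{h^{(N)}}^2 d\rho=\sum_{i\le N}\lambda_i G_j(\lambda_i)^2\SN{f_i}^2$ are monotone, and monotonicity of the integrals alone does not identify their limit with $\int_\CX\SN{h}^2 d\rho$ (Fatou only gives one inequality). The clean fix is to observe that \eqref{eqn:part_unity} forces $\lambda G_j(\lambda)^2\le 1$ on $(0,\kappa^2]$, so $\sum_{i\in\CI_\rho}\lambda_i G_j(\lambda_i)^2\SN{f_i}^2\le\N{\Pmat_{\CH_\rho}f}_\CH^2<\infty$; the partial sums $h^{(N)}$ then form an orthogonal series converging in $\Ltwo$, and since they also converge pointwise $\rho$-a.e.\ to $\EUSP{f}{\psi_{j,\cdot}}_\CH$ (by the $\CH$-convergence of the expansion of $\psi_{j,x}$ guaranteed by \eqref{eq:KxDj} and continuity of the inner product), the two limits agree a.e.\ and the Parseval-type identity holds with both sides finite. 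In particular, the case where both sides equal $+\infty$, which you provision for, never actually occurs.
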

\begin{proof}
The equality \eqref{eqn:frame_prop_L2} follows from \eqref{eqn:frame_prop_H}
and the fact that $U$ is unitary from $\CH_\rho$ to $\overline{\CH}_\rho$.
To establish \eqref{eqn:frame_prop_H},
in view of Lemma~\ref{dense}
it suffices to consider functions in the dense subspace $ \CD \subset \CH $.
Thus, let $ f \in \CD $.
Since $\prefilter(\Th)$ is self-adjoint on $\CD_j$, and $ \CD \subset \CD_j $ for all $j$, we have
\[
\EUSP{f}{\psi_{j,x}}_\CH = \EUSP{f}{\prefilter(\Th) K_x}_{\CH} = \EUSP{\prefilter(\Th) f}{K_x}_{\CH} ,
\]
which integrated over $x\in\CX$ gives
\begin{equation} \label{eq:Fj(T)f}
\int_{ \CX}  \SN{\EUSP{f}{\psi_{j,x}}_{\CH}}^2\, d\rho(x) =
\EUSP{\Th \prefilter(\Th) f }{\prefilter(\Th) f}_\CH =
\EUSP{\Th \prefilter(\Th)^2\,f}{f}_\CH .
\end{equation} 
Summing over $j\geq0$ and using \eqref{eqn:part_unity}, we therefore obtain
\begin{align*}
\sum_{j\geq0} \EUSP{\Th\prefilter(\Th)^2\, f}{f}_{\CH}
& = \sum_{i\in\CI_\rho} \Big(\SN{\EUSP{f}{v_i}_{\CH}}^2\sum_{j\geq0} \lambda_i\prefilter(\lambda_i)^2 \Big) \\
& = \sum_{i\in \CI_\rho} \SN{\EUSP{f}{v_i}_{\CH}}^2
= \N{\Pmat_{\CH_\rho}f}_{\CH}^2.
\end{align*}
\qed
\end{proof}

The frame property can also be expressed as a resolution of the identity.
Such a formulation will be particularly useful in Section \ref{sec:stability}.

\begin{prop} \label{prop:resolution}
Under the assumptions of Proposition \ref{prop:frame},
there exists a positive bounded operator $\Th_j:\CH\rightarrow\CH$ such that
\begin{equation} \label{eq:Tj}
\Th_j = \int_\xx \psi_{j,x}\otimes \psi_{j,x} \, d\rho(x) ,
\end{equation}
where the integral converges weakly.
Furthermore,
\begin{align}
      & \Th_j= \Th \prefilter(\Th)^2 ,
      \label{eq:4} \\
      & \sum_{j\le\tau} \Th_j = \Th g_\tau(\Th) ,
      \label{eq:5}
    \end{align}
and the following resolution of the identity holds true:
\begin{equation} \label{eq:resolution}
 \Pmat_{\CH_\rho} = \sum_{j\ge0} \Th_j .
\end{equation}
\end{prop}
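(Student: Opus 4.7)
The plan is to identify the operator $\Th_j$ via polarization of the quadratic-form identity already established in the proof of Proposition~\ref{prop:frame}, extend it by density to all of $\CH$, and then obtain the resolution of the identity from standard functional calculus applied to the telescopic identity \eqref{eq:sumG=g}.

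First, I would revisit equation \eqref{eq:Fj(T)f} and polarize it. For $f,g\in\CD$, the self-adjointness of $\prefilter(\Th)$ on $\CD_j\supset\CD$ together with the reproducing property yields
\[
\int_\CX \EUSP{f}{\psi_{j,x}}_\CH \overline{\EUSP{g}{\psi_{j,x}}_\CH}\,d\rho(x) = \EUSP{\Th\,\prefilter(\Th)^2 f}{g}_\CH .
\]
The right-hand side is a bounded sesquilinear form on $\CH\times\CH$: indeed, since each $\prefilter(\lambda)^2\ge 0$, the partition of unity \eqref{eqn:part_unity} forces $\lambda\,\prefilter(\lambda)^2\le 1$ on $\sigma(\Th)\subset[0,\kappa^2]$, hence $\|\Th\,\prefilter(\Th)^2\|\le 1$. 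Setting $\Th_j:=\Th\,\prefilter(\Th)^2$ therefore yields a positive bounded operator on $\CH$, which gives \eqref{eq:4} immediately.

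Next, to justify the weak integral representation \eqref{eq:Tj} for arbitrary $f,g\in\CH$ I would check that $x\mapsto \EUSP{f}{\psi_{j,x}}_\CH\overline{\EUSP{g}{\psi_{j,x}}_\CH}$ is integrable. Cauchy--Schwarz bounds its $L^1$-norm by $\bigl(\int|\EUSP{f}{\psi_{j,x}}|^2 d\rho\bigr)^{1/2}\bigl(\int|\EUSP{g}{\psi_{j,x}}|^2 d\rho\bigr)^{1/2}$, and both factors are finite by Proposition~\ref{prop:frame}. Hence the sesquilinear form $(f,g)\mapsto \int_\xx \EUSP{(\psi_{j,x}\otimes\psi_{j,x})f}{g}_\CH d\rho(x)$ is well defined and continuous on $\CH\times\CH$. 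Since it agrees with $\EUSP{\Th_j f}{g}_\CH$ on the dense subspace $\CD$ (by the polarized identity above and Lemma~\ref{dense}), it agrees on all of $\CH$, proving the weak integral identity \eqref{eq:Tj}. This density extension is the least routine step and the one I expect to require the most care.

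Finally, \eqref{eq:5} follows by functional calculus and \eqref{eq:sumG=g}:
\[
\sum_{j\le\tau}\Th_j = \Th\sum_{j\le\tau}\prefilter(\Th)^2 = \Th\,g_\tau(\Th) .
\]
For the resolution of the identity \eqref{eq:resolution}, I would let $\tau\to\infty$. By \eqref{eqn:g}, the scalar functions $\lambda\mapsto \lambda g_\tau(\lambda)$ converge pointwise to $\mathbf 1_{(0,\kappa^2]}(\lambda)$, and by \eqref{eqn:part_unity} they are uniformly bounded by $1$. The bounded convergence theorem for the spectral measure of $\Th$ then gives strong convergence of $\Th\,g_\tau(\Th)$ to the spectral projection onto $\sigma(\Th)\setminus\{0\}$. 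By \eqref{eq:svd} and the decomposition $\CH=\CH_\rho\oplus\ker\Inc$, this projection coincides with $\Pmat_{\CH_\rho}$, yielding \eqref{eq:resolution}.
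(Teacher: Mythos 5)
Your proposal is correct and follows essentially the same route as the paper: both identify $\Th_j$ with $\Th\prefilter(\Th)^2$ via the quadratic-form identity \eqref{eq:Fj(T)f}, use the bound $\lambda\prefilter(\lambda)^2\le 1$ from \eqref{eqn:part_unity} together with Lemma~\ref{dense} and the density of $\CD$ to obtain the bounded operator and the weak integral \eqref{eq:Tj}, and derive \eqref{eq:5} from the telescoping identity \eqref{eq:sumG=g}. The only (harmless) divergence is at the last step, where the paper reads \eqref{eq:resolution} off directly as a reformulation of the frame identity \eqref{eqn:frame_prop_H}, while you rederive it by dominated convergence in the spectral calculus, identifying the strong limit of $\Th g_\tau(\Th)$ with the projection onto $(\ker\Th)^\perp=\CH_\rho$.
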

\begin{proof}
 From \eqref{eq:Fj(T)f} we have,
 for all $ f \in \CD $,
 $$
 \int_{ \CX}  \SN{\EUSP{f}{\psi_{j,x}}_{\CH}}^2\, d\rho(x) \le \|\Th \prefilter(\Th)^2\| \|f\|_{\hh}^2 ,
 $$
where $\Th \prefilter(\Th)^2$ is bounded since $\lambda \prefilter(\lambda)^2 \le 1$ by \eqref{eqn:part_unity}.
Hence, thanks to Lemma~\ref{dense}, there exists a positive bounded operator
$\Th_j$ as in \eqref{eq:Tj}.
Moreover, \eqref{eq:Fj(T)f} implies \eqref{eq:4} by the density of $\CD$.
The equality \eqref{eq:5} follows from \eqref{eq:4} and \eqref{eq:sumG=g}.
Lastly, \eqref{eq:resolution} is a reformulation of \eqref{eqn:frame_prop_H}.
\qed
\end{proof}

{Depending on the choice of the measure $\rho$, Proposition \ref{prop:frame} gives the frame property for either a continuous or a discrete setting.}
Namely, consider a discrete set $ \{x_1,\ldots,x_N\} $, and let
\[\widehat{\rho}_N := \frac{1}{N}\sum_{k=1}^N \delta_{x_k} . \]
With the choice of the discrete measure $ \widehat{\rho}_N$,
\eqref{eqn:Th_defn} defines the discrete (non-centered) covariance operator
$ \Themp:\CH\to\CH $ by
\begin{equation*} 
\Themp := \frac{1}{N} \sum_{k=1}^N K_{x_k}\otimes K_{x_k} .
\end{equation*}
Furthermore, Definition \ref{defn:frames} produces the family of wavelets
\begin{equation*} 
 \widehat{\psi}_{j,k} := \prefilter(\Themp) K_{x_k} \quad \text{for } j\geq0 \text{ and } k=1,\dots,N ,
\end{equation*}
which, by Proposition \ref{prop:frame}, constitutes a discrete Parseval frame on
\begin{equation*} 
 \widehat{\CH}_N := \CH_{\widehat{\rho}_N} = \spn\{ K_{x_k} : k=1,\ldots,N\} \simeq \bbC^N.
\end{equation*}
In Section \ref{sec:mcw} we will make reference to this construction to define Monte Carlo wavelets,
where the points $x_1,\ldots,x_N$ are drawn at random from $\CX_\rho$.

\subsection{Two generalizations}

We discuss here two generalizations of the framework presented in Section \ref{sec:frames}.
First, one may readily consider more general scale parameterizations.
Namely, let $\Omega$ be a locally compact, second countable topological space, endowed with a measure $\mu$ defined on the Borel $\sigma$-algebra of
$\Omega$, finite on compact subsets, and such that $\supp \mu=\Omega$. 
Adjusting the definitions accordingly, such as replacing the sums over all non-negative integers $j$ in \eqref{eqn:part_unity} and \eqref{eqn:frame_prop_H} with integrals over $\Omega$ with respect to $\mu$, {the proof of Proposition \ref{prop:frame} follows along the same steps.}
In this context, Definition \ref{defn:filter} can be seen as a special case where $\Omega$ is countable and $\mu$ is the counting measure.
Second, the assumption that the kernel $K$ is bounded,
  implying that  $\Lk$ admits an orthonormal basis of eigenvectors, is not necessary
  for our construction of Parseval frames.   Indeed, it is enough to assume that
  \[
\int_{ \CX}\SN{ f(x)}^2\, d\rho(x) <+\infty \quad \text{for all } f\in\hh .
    \]
This implies that $\hh$ is a subspace of $L^2(\xx;\rho)$ and the inclusion operator $\Inc$ is bounded.
The integral \eqref{eqn:Th_defn} converges now in the weak operator topology,
and the covariance operator $\Th$ is positive and bounded.
Thus, the Riesz--Markov theorem entails that, for all $f\in\CH$, there is a
unique finite measure $\nu_f$ on $[0,+\infty)$ such that
$\nu_f\LRP{[0,+\infty)}=\N{f}_\CH^2$ and
\begin{equation*}
  \EUSP{\Th f}{f}_\CH = \int_{[0,{+\infty})} \lambda d\nu_f(\lambda).
\end{equation*}
By spectral calculus, there exists a unique positive operator
$\prefilter(\Th):\CD_j\to\CH$ such that
\begin{equation*}
  \EUSP{\prefilter(\Th)f}{f}_\CH= \int_{[0,{+\infty})}  \prefilter(\lambda) d\nu_f(\lambda),
\end{equation*}
where now
\begin{equation*}
  \CD_j :=\Big\{f\in\CH : \int_{[0,{+\infty})}   \prefilter(\lambda)^2 d\nu_f(\lambda) <\infty\Big\}.
\end{equation*}
Assume further that
\[
\CD_\infty :=\{f\in\CH : f\in\operatorname{dom} \prefilter(\Th)^2
  \text{ for all } j\geq0\}
\]
is a dense subset of $\CH$. 
Assumption  \eqref{eq:KxDj} and Definition \ref{defn:frames} are still valid. 
Moreover, the proof of Proposition \ref{prop:frame} remains essentially unchanged. The only difference is in the following lines of equalities: for a given
$f\in\CD_\infty$, we have
\begin{align*}
   \sum_{j\geq0} \EUSP{\prefilter(\Th)^2\Th f}{f}_{\CH} & =\sum_{j\geq0}
    \Big( \int_{[0,{+\infty})}  \lambda\prefilter(\lambda)^2  d\nu_f(\lambda) \Big) \\
       & =   \int_{(0,{+\infty})}  \big(  \sum_{j\geq0} \lambda\prefilter(\lambda)^2
  \big)  d\mu_f(\lambda)\\&= \int_{(0,{+\infty})}  1 \,d\mu_f(\lambda)   = \N{\Pmat_{\CH_\rho} f}_{\CH}^2,
\end{align*}
where the second equality is due to Tonelli's theorem.

\section{Monte Carlo wavelets} \label{sec:mcw}

We are finally ready to define our Monte Carlo wavelets.
In the following, we adopt notations, definitions and assumptions of Sections \ref{sec:RKHS} and \ref{sec:Frames}.
For the sake of simplicity, we further assume  $\supp(\rho) = \CX$, so that $\CH_\rho=\CH$.
By Proposition \ref{prop:frame}, the family $\VPsi$ defined in \eqref{eqn:frame_defn}
describes a Parseval frame on the entire Hilbert space $\CH$.
 
\begin{dfn}[Monte Carlo wavelets] \label{def:mcw}
Suppose we have $N$ independent and identically distributed samples $x_1,\ldots,x_N\sim\rho$.
Consider the empirical covariance operator $ \Themp:\CH\to\CH $ defined by
\begin{equation*} 
\Themp := \frac{1}{N} \sum_{k=1}^N K_{x_k}\otimes K_{x_k} .
\end{equation*}
Let $\{\prefilter\}_{j\geq0}$ be a family of filters as in Definition \ref{defn:filter}.
We call
\begin{equation*} 
\widehat\VPsi^N:=\big\{\widehat{\psi}_{j,k} := \prefilter(\Themp) K_{x_k}\,:\, j\ge0 \text{ and } k=1,\dots, N\big\}
\end{equation*}
a family of \emph{Monte Carlo wavelets}.
\end{dfn}
The family $ \widehat\VPsi^N $ of Definition \ref{def:mcw} corresponds to
the family $\VPsi$ of Definition \ref{defn:frames}
with respect to the empirical measure $ \widehat{\rho}_N := \frac{1}{N}\sum_{k=1}^N \delta_{x_k} $.
Hence, thanks to Proposition \ref{prop:frame}, $\widehat{\VPsi}^N$ defines a discrete Parseval frame on the finite dimensional space
$$
\widehat{\CH}_N := \spn\{ K_{x_k} :k=1,\ldots,N\} .
$$
Now, let $\VPsi$ be the family of wavelets in the sense of Definition \ref{defn:frames} with respect to the (continuous) measure $\rho$.
Again by Proposition \ref{prop:frame}, $\VPsi$ is a (continuous) Parseval frame on the (infinite dimensional) space $\CH$.
Taking more and more samples, we obtain a sequence of frames $\widehat{\VPsi}^N$ on a chain of nested subspaces of increasing dimension:
\[ \widehat{\CH}_N\subset\widehat{\CH}_{N+1}\subset \cdots \subset\CH.\]
We thus interpret $ \widehat{\VPsi}^N $ as a Monte Carlo estimate of $ \VPsi $.
In this view, we are interested in studying the asymptotic behavior of $\widehat{\VPsi}^N$ as $N\rightarrow\infty$, and, in particular, the convergence of $\widehat{\VPsi}^N$ to $\VPsi$.

Notice that, despite being finite-dimensional,
the frame $ \widehat\VPsi^N $ consists of functions that are well-defined on the entire space $\xx$.
In particular, for any signal $f$ in the reproducing kernel Hilbert space $ \hh $,
we can study the wavelet expansion
\begin{equation} \label{eq:wavelet_approx}
 f \approx \sum_{j\le\tau} \sum_{k=1}^N \langle f ,
 \hpsi_{j,k} \rangle_\hh \hpsi_{j,k} . 
\end{equation}
This series approximates $f$ up to a resolution $\tau$ and a sampling rate $N$.
Our main result (Theorem \ref{thm:error}) states that, cutting off the frequencies at a threshold $ \tau = \tau(N) $ and letting $N$ go to infinity,
the error of \eqref{eq:wavelet_approx} goes to zero,
\begin{equation*}
\Big\| f - \sum_{j\le\tau(N)} \sum_{k=1}^N \langle f ,
\hpsi_{j,k} \rangle_\hh \hpsi_{j,k} \Big\|_\hh
\xrightarrow{{N\to\infty}} 0 , 
\end{equation*}
at a rate that depends on the regularity of the signal $f$.
In other words, the frame constructed on the sample space $\{x_1,\dots,x_N\}$ is asymptotically resolving the signal defined on the space $\xx$.
This result will be derived as a finite-sample bound in high probability.

\paragraph{Deterministic discretization vs random sampling}

Discretization is a classical problem in frame theory, harmonic analysis and applied mathematics {\it tout court}.
While the construction of reproducing representations may usefully exploit rich topological, algebraic and measure theoretical properties of a continuous parameter space,
discretization is eventually required when it comes to numerical implementation.
Starting from a continuous frame $ \{ \psi_a : a \in \CA \} $ in a Hilbert space $\hh$, frame discretization selects a countable subset of parameters $ \CA' \subset \CA $
so that the corresponding subfamily $ \{ \psi_a : a \in \CA' \} $ preserves the frame property.
This typically involves a deterioration of the frame bounds,
which grows with the sparsity of $\CA'$.

A possible interpretation of our Monte Carlo wavelets is as a randomized approximate  frame discretization.
Random sampling may be useful when the topology of the parameter space is complex or unknown.
On the other hand, our discrete frame is not a frame on the original space $\hh$,
but only on a finite dimensional approximation $\widehat{\hh}$ of $\hh$.
Notice though that our frame preserves the tightness,
and the signal loss $ \hh \setminus \widehat{\hh} $ is asymptotically zero.
Moreover, the numerical implementation of any discretized frame on $\hh$ would still require truncation at finitely many terms, resulting in fact in a loss of the global frame property.
Lastly, when the space is unknown and we can only access signals trough finite samples,
going beyond the given sampling resolution might per se not be significant,
while our results characterize how the frame parameters may be chosen
adaptively to the given sampling rate.

\paragraph{Numerical implementation}

The representation of $ \widehat{\psi}_{j,k} $ in Definition \ref{def:mcw} is remarkably compact,
but hardly suitable for computation.
We next provide an implementable formula of our Monte Carlo wavelets,
using the Mercer representation \eqref{eqn:psikx_series}
along with the singular value decomposition \eqref{eq:svd}.
Let $ \Themp \hv_i = \hla_i \hv_i $ be the eigendecomposition of $\Themp$.
Then \eqref{eqn:psikx_series} reads as
\begin{equation*} 
 \hpsi_{j,k} (x) = \sum_{i=1}^N G_j(\hla_i) \overline{\hv_i(x_k)} \hv_i(x) \qquad j\geq0,\, k = 1,\dots,N ,
\end{equation*}
where the eigenpairs $ (\hla_i,\hv_i) $ can be computed from the \emph{kernel matrix}
\begin{equation} \label{eq:kernel_matrix}
 \VK[i,k] := K(x_i,x_k) \qquad i , k = 1,\dots,N .
\end{equation}
Indeed, we have $ \Themp = \hInc^* \hInc $ and $ N^{-1} \VK = \hInc \hInc^* $,
where $\hInc$ is the \emph{sampling operator}
\begin{equation} \label{eq:sampling}
 \hInc : \CH \to \C^N ,
 \qquad
 (\hInc f)[i] = f(x_i)
 \qquad
 i = 1,\dots,N ,
\end{equation}
and $\hInc^*$ is the \emph{out-of-sample extension}
\begin{equation} \label{eq:out-of-sample}
 \hInc^* : \C^N \to \CH ,
 \qquad
 (\hInc^*\Vu) (x) = \frac{1}{N} \sum_{\ell=1}^N K(x,x_\ell) \Vu[\ell]
 \qquad
 x \in \xx .
\end{equation}
Thus, the eigenvalues $\hla_i$ of $\Themp$ are exactly the eigenvalues of $ N^{-1}\VK$.
Moreover, in view of \eqref{eq:svd}, the eigenfunctions $\hv_i$
can be obtained from the eigenvectors $\hu_i$ of $ N^{-1}\VK$ by
$$
 \hv_i
 = \hla_i^{-1/2} \hInc^* \hu_i
 = \hla_i^{-1/2} \frac{1}{N} \sum_{\ell=1}^N \hu_i[\ell] K_{x_\ell} ,
$$
which evaluated at $x_k$ gives
$$
 \hv_i(x_k)
 = \hla_i^{-1/2} \frac{1}{N} \sum_{\ell=1}^N K(x_k,x_\ell) \hu_i[\ell]
 = \hla_i^{-1/2} N^{-1} (\VK \hu_i)[k]
 = \hla_i^{1/2} \hu_i[k] .
$$
We therefore obtain the computable formula
\begin{equation*}
\hpsi_{j,k} (x) = \frac{1}{N} \sum_{i,\ell=1}^N G_j(\hla_i) \overline{\hu_i[k]} \hu_i[\ell] K(x,x_\ell) \qquad j\geq0,\, k = 1,\dots,N .
\end{equation*}
For what concerns the Monte Carlo wavelet transform of a signal $ f \in \CH $,
it is easy to see that
$$
 \langle f , \hpsi_{j,k} \rangle_\CH = \VU G_j(\Lambda) \VU^* f(x_k) ,
$$
where $ N^{-1} \VK = \VU \Lambda \VU^* $ expresses the eigendecomposition of $N^{-1}\VK$ in matrix form.

\paragraph{Computational considerations}
The bottleneck in the implementation of our Monte Carlo wavelets is the eigendecomposition of the kernel matrix,
which requires in general $\CO(N^3)$ operations
and is therefore impractical in typical large scale scenarios.
This is in fact a common problem for virtually all spectral based constructions of frames (see {\it e.g.} \cite{GBvL17,HVG11,MM08}).
A possible solution
is approximating the filters by low order polynomials,
thus simplifying the functional calculus to repeated matrix-vector multiplication,
which scales well in the case of sparse graphs
\cite{HVG11}.
While kernel matrices are typically dense,
such an approach may still be useful for compactly supported kernels \cite{wendland95},
although their real applicability is mostly limited to the low-dimensional regime.
Besides sparsity, a more reasonable property to leverage is fast eigenvalue decay,
which opens onto a variety of methods for truncated approximate SVD.
Deterministic methods allow to compute an $r$-rank approximation in $\CO(r N^2)$ \cite{SSB19},
whereas randomized methods can further reduce the complexity to $\CO(\log r N^2 + r^2 N)$ \cite{HMT11,RanLinAlg}.

We also remark that the actual Monte Carlo approximation of a given signal
is in principle a different problem than the computation of the frame itself,
and as such may in some cases be more tractable.
For example, for some specific filters as in Table \ref{tab:list_of_gs},
the computation of \eqref{eq:wavelet_approx} boils down to
the implementation of some regularized inversion or minimization procedure,
for which several approaches based on sketching, random projections, hierarchical decompositions and early stopping may be profitably used \cite{pmlr-v51-camoriano16,JMLR:v18:15-376,NIPS2017_850af92f,NIPS2015_5936,NIPS2017_05546b0e,NIPS2017_61b1fb3f,2017arXiv170602205S,earlystopping}.
An efficient implementation of Monte Carlo wavelets is out of the scope of this paper
and will be subject of future work.

\section{Comparison with other frame constructions} \label{sec:comparison}

The approach we adopt in Section \ref{sec:Frames} differs from the existing literature in several crucial aspects.
We now give an overview of similarities and differences.
As argued in Section \ref{sec:intro}, many techniques for the analysis of signals on non-Euclidean domains, such as manifolds and graphs,
are based on spectral filtering of some suitable operator.
There are, generally speaking, two distinct yet related perspectives.

A first type of methods builds frames for function spaces on compact differentiable manifolds associated with certain positive operators (predominantly the Laplace--Beltrami operator).
In \cite{TKP12,GP11}, filter functions $g_j$ are applied to the given operator $\Lmat$, giving $g_j(\sqrt{\Lmat})$ for $j\geq0$.
One then needs to ensure that this defines an integral operator with a corresponding kernel $\psi_j(\sqrt{\Lmat})(x,y)$, which often poses a technical challenge, and relies on the relationship between the operator $\Lmat$ and local metric properties of the manifold.
We avoid this by using a positive definite kernel from the start.
The next step is to sample points $\{x^j_k\}_{k=1}^{m_j}$ from the manifold for each scale $j$,
in such a way that they form a $\delta_j$-net and satisfy a cubature rule for functions in the desired space.
Frame elements are then defined by $C_{j,k}\,\psi_j(\sqrt{\Lmat})(x^j_k,\cdot)$, for some suitable weights $C_{j,k}$.
The resulting family of functions constitutes a non-tight frame on the entire function space.
On the contrary, our sampled frames are Parseval frames on finite-dimensional subspaces.
As we are going to show in the next section,
in order to establish convergence
we do not require a stringent selection of points; instead, we sample at random, which allows for a straightforward algorithmic approach,
independent of the specific geometry of the underlying space.

In a different line of research \cite{MM08,M10,WANG202064}, frames are built on an arbitrary orthonormal basis $\{w_i\}_{i\geq0}$ of a separable Hilbert space of functions defined on a quasi-metric measure space, together with a suitable sequence of positive reals $(l_i)_{i\geq0}$.
Based on these data, a kernel-like function $K_H(x,\cdot) := \sum_{i\geq0} H(l_i) w_i(x)w_i$ is constructed.
This mirrors the basis expansion of frame elements \eqref{eqn:psikx_series},
but in our case a specific orthonormal basis is taken, that is, the eigenbasis of the integral operator,
and $(l_i)_{i\geq0}$ are the corresponding eigenvalues.
Due to the use of an arbitrary basis and sequence, an additional effort (or a set of assumptions) needs to be made in order to ensure the desired properties, such as the decay of the approximation error as the number of eigenvalues resolved by the function $H$ increases. 
Some of the results are similar to those in our paper, albeit estimation errors or sample bounds have not been established in this context.

On the other hand, starting from a discrete setting,
graph signal processing
considers a weight (or adjacency) matrix to define a certain graph operator $\Lmat$, such as the graph Laplacian \cite{GBvL17,HVG11} or a diffusion operator \cite{CM06}.
The frame elements are then defined in the spectral domain as $\psi_{j,x} := g_j( \Lmat)\delta_x$, where $g$ is an admissible wavelet kernel, $j$ a scale parameter, and $\delta_x$ the indicator function of a vertex $x$. 
This is conceptually similar to \eqref{eqn:frame_defn}, though there are also several distinctions. 
First, following \cite{GBvL17}, our construction results in  \emph{Parseval} frames.
This simplifies the computational effort, since Parseval frames are canonically self-dual,
and thus signal reconstruction does not require the computation of a dual frame.
Moreover, to localize the frame in space we use the continuous kernel function $K_x$, instead of the impulse $\delta_x$.
Since in our setting the kernel $K$ is used both to define the underlying integral operator and to localize the frame elements, we can use the theory of RKHS to establish a connection between continuous and discrete frames, as we will show in Section \ref{sec:stability}.
In typical constructions of frames on graphs, a more judicious effort is usually required to elaborate analogous convergence results.

\section{Stability of Monte Carlo wavelets} \label{sec:stability}

In this section we study the relationship between continuous and discrete frames,
regarding the latter as Monte Carlo estimates of the former.
{We begin by restricting our attention to $\CH$, and we will then extend the analysis to $\Ltwo$.}
Let
\begin{equation*} 
\Th_j := \int_{\CX} \psi_{j,x} \otimes \psi_{j,x} d\rho(x) , \qquad \Themp_j :=\frac{1}{N} \sum_{k=1}^N \widehat{\psi}_{j,k} \otimes \widehat{\psi}_{j,k}
\end{equation*}
be the frame operators associated with the {scale} $j$, and its empirical counterpart.
By Proposition~\ref{prop:resolution}, we have
\[ \Id_\CH = \sum_{j\geq0} \Th_j , \qquad \Id_{\widehat\CH_N} = \sum_{j\geq0} \Themp_j.\]
For $f\in\CH$, given a threshold scale $\tau\in\mathbb N$ and a sample size $N$, we let
\begin{align} \label{eq:hftauN}
 \widehat{f}_{\tau,N}:=\sum_{j=0}^\tau \Themp_j f
\end{align}
be the empirical approximation of $f$ using the first $\tau$ scales of the frame $\widehat{\VPsi}^N$.
The reconstruction error of $\widehat{f}_{\tau,N}$ can be decomposed into
\begin{equation}\label{eqn:error_splitting}
\N{f- \widehat{f}_{\tau,N}}_\CH\leq \Big\|{\sum_{j>\tau} \Th_j f}\Big\|_\CH + \Big\|{\sum_{j=0}^\tau \LRP{\Th_j-\Themp_j}f}\Big\|_\CH.
\end{equation}
The first term is the \emph{approximation error}, {arising from the truncation of the resolution of} the identity. 
The second term is the \emph{estimation error}, which stems from estimating the measure by means of empirical samples.
Next, we derive quantitative error bounds for both terms, and then balance the resolution  $\tau$ in terms of sample size $N$ to obtain our convergence result.

\paragraph{Approximation error}
Note that Proposition \ref{prop:frame} already implies 
$$
\textN{\sum_{j>\tau} \Th_j f}_\CH \xrightarrow{\tau\rightarrow\infty} 0 ,
$$
being the tail of a convergent series.
To quantify the speed of convergence with respect to $\tau$,  approximation theory suggests that  $f$ has to obey some notion of regularity.
In the following we assume a smoothness of Sobolev kind (see \cite{fefupe16} and Section \ref{sec:Besov}),
also known in statistical learning theory as the \emph{source condition} (see \cite{caponnetto-devito}):
\[ f = \Th^\alpha h \text{ for some } h\in\CH \text{ and } \alpha>0.\]
\begin{prop}\label{prop:approx_bound}
Assume that $g_j$ has qualification $ \nu \in (0,\infty] $ and $f\in\range(\Th^\alpha)$ for some $ \alpha > 0 $.
Let $\beta := \min\{\nu, \alpha\}$.
Then
\begin{equation*} 
\Big\|{\sum_{j>\tau} \Th_j f}\Big\|_\CH \lesssim \N{\Th^{-\alpha} f}_\CH \kappa^{2(\alpha-\beta)}\tau^{-\beta}.
\end{equation*}
\end{prop}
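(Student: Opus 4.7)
\textbf{Proof plan for Proposition \ref{prop:approx_bound}.}
The plan is to express the tail operator $\sum_{j>\tau}\Th_j$ as a residual function of $\Th$ and then exploit the source condition through spectral calculus. Since $\supp(\rho)=\CX$ is in force in Section \ref{sec:mcw}, we have $\CH_\rho=\CH$, so the resolution of the identity \eqref{eq:resolution} together with \eqref{eq:5} gives
\begin{equation*}
 \sum_{j>\tau}\Th_j \;=\; \Id_\CH - \sum_{j\le\tau}\Th_j \;=\; \Id_\CH - \Th\,g_\tau(\Th) \;=: r_\tau(\Th),
\end{equation*}
where $r_\tau(\lambda):=1-\lambda g_\tau(\lambda)$ is the familiar residual function from regularization theory. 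This already encodes the full information we need about $g_\tau$: by hypothesis, $g_\tau$ has qualification $\nu$, i.e.\ $\sup_{\lambda\in(0,\kappa^2]}\lambda^{\nu'}|r_\tau(\lambda)|\le C_{\nu'}\tau^{-\nu'}$ for every $\nu'\in(0,\nu]$.

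Next, I would plug in the source condition. Writing $f=\Th^\alpha h$ with $h\in\CH$ (and taking $h$ to realize $\|h\|_\CH=\|\Th^{-\alpha}f\|_\CH$), the functional calculus gives
\begin{equation*}
 \Big\|\sum_{j>\tau}\Th_j f\Big\|_\CH = \|r_\tau(\Th)\Th^\alpha h\|_\CH \;\le\; \Big(\sup_{\lambda\in\sigma(\Th)}\lambda^\alpha\,|r_\tau(\lambda)|\Big)\,\|h\|_\CH,
\end{equation*}
so the task reduces to bounding $\sup_{\lambda\in(0,\kappa^2]}\lambda^\alpha|r_\tau(\lambda)|$.

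I would then split into two cases according to whether the source exponent is below the qualification. If $\alpha\le\nu$, then $\beta=\alpha$ and the qualification bound with $\nu'=\alpha$ directly yields $\sup_\lambda \lambda^\alpha|r_\tau(\lambda)|\le C_\alpha\tau^{-\alpha}$, and the factor $\kappa^{2(\alpha-\beta)}=1$ is consistent. If $\alpha>\nu$, then $\beta=\nu$ and I would factor
\begin{equation*}
 \lambda^\alpha|r_\tau(\lambda)| \;=\; \lambda^{\alpha-\nu}\cdot \lambda^\nu|r_\tau(\lambda)| \;\le\; \kappa^{2(\alpha-\nu)}\,C_\nu\,\tau^{-\nu},
\end{equation*}
using $\lambda\le\kappa^2$ on the spectrum of $\Th$; this is precisely the place where the $\kappa^{2(\alpha-\beta)}$ factor enters, reflecting the saturation effect. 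Combining the two cases and replacing $\|h\|_\CH$ by $\|\Th^{-\alpha}f\|_\CH$ yields the claimed bound.

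There is no real obstacle: the argument is a textbook spectral calculus estimate. The only subtlety worth flagging is that \eqref{eq:resolution} only gives the projection $\Pmat_{\CH_\rho}$ rather than $\Id_\CH$ in general, which is why one uses the standing assumption $\supp(\rho)=\CX$ from Section \ref{sec:mcw}; otherwise the statement would hold for $\Pmat_{\CH_\rho}f$, and the source condition implicitly forces $f\in\overline{\range(\Th)}\subset\CH_\rho$ anyway, so the conclusion is unaffected.
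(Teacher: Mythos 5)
Your proposal is correct and follows essentially the same route as the paper's proof: both identify $\sum_{j>\tau}\Th_j=\Id_\CH-\Th g_\tau(\Th)$ via \eqref{eq:5}, apply spectral calculus with the source condition, use the qualification bound at exponent $\beta=\min\{\alpha,\nu\}$ (valid since lower qualifications are inherited, as the residual is bounded by $1$), and absorb the excess smoothness $\lambda^{\alpha-\beta}\le\kappa^{2(\alpha-\beta)}$. The only cosmetic difference is that the paper places the factor $\kappa^{2(\alpha-\beta)}$ by bounding $\N{\Th^{-\beta}f}_\CH\le\kappa^{2(\alpha-\beta)}\N{\Th^{-\alpha}f}_\CH$ in the eigenbasis expansion, whereas you split $\lambda^\alpha=\lambda^{\alpha-\beta}\lambda^\beta$ inside the supremum; these are the same estimate.
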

\begin{proof}
By~\eqref{eq:5} we have $ \sum_{j>\tau} \Th_j = \Id_\CH - \Th g_\tau(\Th) $.
Hence,
\begin{align*}
\Big\|{\sum_{j>\tau} \Th_j f}\Big\|_\CH^2 &= \sum_{i\in\CI_\rho} | 1-\lambda_i g_\tau(\lambda_i)|^2 \SN{\EUSP{f}{v_i}_\CH}^2 \\
& = \sum_{i\in\CI_\rho} \LRP{\lambda_i^{\beta} | 1-\lambda_i g_\tau(\lambda_i) | }^2 \SN{\EUSP{\Th^{-\beta} f}{v_i}_\CH}^2 \\
& \leq \biggl( \sup_{i\in\CI_\rho} \lambda_i^{\beta} | 1-\lambda_i g_\tau(\lambda_i)| \biggr)^2 \sum_{i\in\CI_\rho} \SN{\EUSP{\Th^{-\beta} f}{v_i}_\CH}^2 \\
& \lesssim  \tau^{-2\beta} \kappa^{4(\alpha-\beta)} \N{\Th^{-\alpha} f}_\CH^2 .
\end{align*}
\qed
\end{proof}

\paragraph{Estimation error}
To bound the second term in \eqref{eqn:error_splitting}, we rely on concentration results for covariance operators \cite{RBD10}.
\begin{prop}\label{prop:estimation_bound}
Assume that $\lambda\mapsto \lambda g_\tau(\lambda)$ is Lipschitz continuous on $[0,\kappa^2]$ with Lipschitz constant $L(\tau)$.
Then, for every $f\in\CH$ and $ t > 0 $, with probability at least $1-2e^{-t}$ we have
\begin{equation*}
\Big\|{\sum_{j=0}^\tau \LRP{\Th_j-\Themp_j}f}\Big\|_\CH \lesssim \N{f}_\CH\kappa^2\sqrt{t} L(\tau) N^{-1/2}.
\end{equation*}
\end{prop}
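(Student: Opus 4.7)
The plan is to collapse the telescoping sum via \eqref{eq:5} into the difference of a single function of $\Th$ and its empirical counterpart, then exploit the Lipschitz hypothesis together with a concentration bound for the covariance operator. More precisely, setting $\phi(\lambda) := \lambda g_\tau(\lambda)$, the identity \eqref{eq:5} together with its obvious analogue for $\Themp$ (same proof applied to the empirical measure $\widehat\rho_N$) yields
\[
\sum_{j=0}^\tau \Th_j = \phi(\Th), \qquad \sum_{j=0}^\tau \Themp_j = \phi(\Themp),
\]
so that the quantity to estimate is simply $\N{(\phi(\Th) - \phi(\Themp))f}_\CH \le \N{\phi(\Th) - \phi(\Themp)} \,\N{f}_\CH$.

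Next, I would invoke the operator Lipschitz inequality in the Hilbert--Schmidt norm: for any pair of self-adjoint (in fact positive) operators $A,B$ on a separable Hilbert space and any Lipschitz function $\phi$ with constant $L$ on an interval containing $\sigma(A)\cup\sigma(B)$,
\[
\N{\phi(A) - \phi(B)}_{\HS} \le L \,\N{A - B}_{\HS}.
\]
Since $\sigma(\Th), \sigma(\Themp) \subset [0,\kappa^2]$ and $\phi$ is $L(\tau)$-Lipschitz on $[0,\kappa^2]$ by hypothesis, and since $\|\cdot\| \le \|\cdot\|_{\HS}$, this gives
\[
\N{\phi(\Th) - \phi(\Themp)} \le L(\tau)\, \N{\Th - \Themp}_{\HS}.
\]

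Finally, I would apply a standard concentration bound for the empirical covariance operator: the random variables $\xi_k := K_{x_k}\otimes K_{x_k} - \Th$ are i.i.d., centered, take values in the Hilbert space of Hilbert--Schmidt operators on $\CH$, and are bounded, $\N{\xi_k}_{\HS} \le 2\kappa^2$ almost surely, with $\bbE\N{\xi_k}_{\HS}^2 \le \kappa^4$. A Bernstein inequality in Hilbert space (see \cite{RBD10}) therefore yields, for every $t>0$, with probability at least $1-2e^{-t}$,
\[
\N{\Th - \Themp}_{\HS} = \Big\|\frac{1}{N}\sum_{k=1}^N \xi_k\Big\|_{\HS} \lesssim \kappa^2 \sqrt{t}\, N^{-1/2}.
\]
Chaining the three bounds gives the desired estimate. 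The mildly delicate ingredient is the operator Lipschitz inequality in Hilbert--Schmidt norm, which is non-trivial for non-commuting self-adjoint operators but is classical (Birman--Solomyak, Kittaneh); everything else is either algebraic manipulation using results already established in the paper or the by-now standard Bernstein bound for sample covariance operators in an RKHS.
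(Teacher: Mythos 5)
Your proposal is correct and follows essentially the same route as the paper: collapse the telescoping sum via \eqref{eq:5} to $\Th g_\tau(\Th) - \Themp g_\tau(\Themp)$, apply the operator-Lipschitz inequality in Hilbert--Schmidt norm (which the paper proves as Lemma~\ref{lem:Lip} rather than citing Birman--Solomyak/Kittaneh), and conclude with the Bernstein-type concentration bound for $\|\Th-\Themp\|_{\operatorname{HS}}$ from \cite[Theorem 7]{RBD10}. The only cosmetic difference is that you pass through the operator norm before bounding by the Hilbert--Schmidt norm, whereas the paper bounds $\|(\cdot)f\|_\CH$ by $\|\cdot\|_{\operatorname{HS}}\|f\|_\CH$ directly.
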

\begin{proof}
Using~\eqref{eq:5} and Lemma \ref{lem:Lip} we have
\begin{align*}
\Big\|{\sum_{j=0}^\tau \LRP{\Th_j-\Themp_j}f}\Big\|_\CH
& = \Big\| \LRP{\Th g_\tau(\Th) - \Themp g_\tau(\Themp)} f \Big\|_\CH \\
& \le \Big\| \Th g_\tau(\Th) - \Themp g_\tau(\Themp) \Big\|_{\operatorname{HS}}\N{f}_\CH \\
& \leq L(\tau)\big\|{\Th-\Themp}\big\|_{\operatorname{HS}} \N{f}_\CH.
\end{align*}
Bounding $\textN{\Th-\Themp}_{\operatorname{HS}} $ with the concentration estimate \cite[Theorem 7]{RBD10} we obtain
\[ \big\|{\Th-\Themp}\big\|_{\operatorname{HS}} \lesssim \kappa^2 \sqrt{t} N^{-1/2} \]
with probability no lower than $1-2e^{-t}$.
\qed
\end{proof}
All examples of filters given in Section \ref{sec:filters} satisfy the Lipschitz condition required in Proposition \ref{prop:estimation_bound}.
\begin{lem} \label{lem:lip}
Let $ g_j $ be a spectral function from Table \ref{tab:list_of_gs}.
 Then the function $ \la \mapsto \la g_\tau(\la) $ is Lipschitz continuous on $[0,\ka^2]$,
with Lipschitz constant $ L(\tau) \lesssim \tau $
for the first four spectral functions,
and $ L(\tau) \lesssim \tau^2 $ for the last two.
 Moreover, let $ g_j $ be defined as in Example \ref{ex:classical_filters}, with $ |g'| \le B $.
 Then the function $ \la \mapsto \la g_\tau(\la) $ is Lipschitz continuous on $[0,\ka^2]$,
with Lipschitz constant $ L(\tau) \le B 2^\tau $.
\end{lem}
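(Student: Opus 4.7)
The plan is to verify the Lipschitz bound case by case, in each case computing (or estimating) the derivative of $r_\tau(\lambda):=\lambda g_\tau(\lambda)$ and bounding it uniformly on $[0,\kappa^2]$. Since $r_\tau$ is smooth on the compact interval $[0,\kappa^2]$ for every method in Table~\ref{tab:list_of_gs}, it suffices to show $\sup_{\lambda\in[0,\kappa^2]}|r_\tau'(\lambda)|\le L(\tau)$ with the claimed dependence on $\tau$.

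For the first four rows, direct differentiation does the job. For Tikhonov, $r_\tau(\lambda)=\lambda/(\lambda+1/\tau)$, so $r_\tau'(\lambda)=(1/\tau)/(\lambda+1/\tau)^2$, maximized at $\lambda=0$ with value $\tau$. For iterated Tikhonov of order $m$, one rewrites $r_\tau(\lambda)=1-(1/\tau)^m/(\lambda+1/\tau)^m$, whose derivative $m(1/\tau)^m/(\lambda+1/\tau)^{m+1}$ is again maximal at $\lambda=0$ and equals $m\tau$. For Landweber, $r_\tau(\lambda)=1-(1-\gamma\lambda)^\tau$ with $\gamma<1/\kappa^2$, so $r_\tau'(\lambda)=\tau\gamma(1-\gamma\lambda)^{\tau-1}$ is bounded by $\tau\gamma\le\tau/\kappa^2$ on $[0,\kappa^2]$. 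For asymptotic regularization, $r_\tau(\lambda)=1-e^{-\tau\lambda}$ and $r_\tau'(\lambda)=\tau e^{-\tau\lambda}\le\tau$. Thus all four satisfy $L(\tau)\lesssim\tau$.

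The heavy-ball ($\nu$-method) and Nesterov rows are the main obstacle: these are defined by a second-order recursion and no explicit closed form is readily available. Here I would invoke the standard analysis of accelerated iterative regularizers (see, e.g., the references in the paper such as \cite{enhane96,Pagliana2019ImplicitRO}), which yields polynomial residual polynomials $1-\lambda g_\tau(\lambda)$ of degree $\tau$ whose derivative, equivalently that of $r_\tau$, is known to satisfy $\|r_\tau'\|_{L^\infty[0,\kappa^2]}\lesssim\tau^2$. Concretely, $r_\tau$ is a polynomial in $\lambda$ of degree $\tau$ taking values in $[0,1]$ on $[0,\kappa^2]$, and by a Markov–Bernstein type inequality applied to accelerated polynomial residuals one obtains the quadratic factor; this is the known improvement of Landweber's $\tau$ by one order at the price of a squared derivative growth, the hallmark of acceleration.

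Finally, for the localized filter family of Example~\ref{ex:classical_filters}, one uses \eqref{eq:sumG=g} to write $\lambda g_\tau(\lambda)=g(2^\tau\lambda)$, where $g$ is smooth with $|g'|\le B$. The chain rule gives $r_\tau'(\lambda)=2^\tau g'(2^\tau\lambda)$, and hence $|r_\tau'(\lambda)|\le B\,2^\tau$ on $[0,\kappa^2]$, which is the asserted bound. Putting the six cases of the table together with this last case yields the lemma; the only part that is not a short calculation is the quadratic bound for the accelerated methods, which I would import from the regularization literature.
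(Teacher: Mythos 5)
Your proposal is correct and follows essentially the same route as the paper: explicit differentiation of $\lambda\mapsto\lambda g_\tau(\lambda)$ for the first four regularizers, a Markov-type polynomial inequality for the degree-$\tau$ residuals of the two accelerated methods (the paper cites Markov brothers' inequality via \cite[Supplemental, Lemma 1]{Pagliana2019ImplicitRO}, which is exactly the bound you import), and the chain rule on $g(2^\tau\lambda)$ for the localized filters. Your observation that $\lambda g_\tau(\lambda)\in[0,1]$, needed for the Markov step, is justified by the monotonicity and normalization in \eqref{eqn:g}.
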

\begin{proof}
For the first four spectral functions of Table \ref{tab:list_of_gs},
the claim follows by bounding the explicit derivative of $ \lambda \mapsto \lambda g_\tau(\lambda)$;
for the last two, from an application of Markov brothers' inequality (see \cite[Supplemental, Lemma 1]{Pagliana2019ImplicitRO}).
For filters of Example \ref{ex:classical_filters}, we differentiate $ \la \mapsto g(2^{\tau}\la) $
and use $ |g'| \le B $.
\qed
\end{proof}

\begin{rmk} \label{rmk:sharper}
In this paper we are not interested in the constants. We rely on the Hilbert norm since it provides both a simple bound on
$\big\|{\Th-\Themp}\big\|_{\operatorname{HS}} $ and,  by the Lipschitz
assumption, the stability bound $\big\| \Th g_\tau(\Th) - \Themp g_\tau(\Themp)
\big\|_{\operatorname{HS}}\N{f}_\CH \leq
L(\tau)\big\|{\Th-\Themp}\big\|_{\operatorname{HS}}  $.  Our result can be
improved by using the sharper bound
\[
  \big\|{\Th-\Themp}\big\| \leq  C \big\|\Th\| \max\Big\{ \sqrt{\frac{{r(\Th)}}{N}}, \frac{r(\Th)}{N},
  \sqrt{\frac{t}{N}}, \frac{r(\Th)}{N}\Big\}, 
\]
where $r(\Th)=\frac{\operatorname{trace}(\Th)}{\N{\Th}}$
(see Theorem~9 in \cite{kolo17} and the techniques in the proof of
Theorem 3.4 in \cite{blmu18} to bound $\big\| \Th g_\tau(\Th) - \Themp g_\tau(\Themp)
\big\|$).
\end{rmk}

\paragraph{Reconstruction error and convergence}
Combining Propositions \ref{prop:approx_bound} and \ref{prop:estimation_bound}, we can finally prove the convergence of our Monte Carlo wavelets.
In order to balance approximation and estimation error, we need to tune the resolution $\tau$ with the number of samples $N$ and the smoothness $\alpha$ of the signal, in so far as the qualification $\nu$ of the filter allows.

\begin{thm} \label{thm:error}
Assume that $g_\tau$ has qualification $ \nu \in (0,\infty] $,
$f\in\range(\Th^\alpha)$ for some $ \alpha > 0 $,
and $\lambda\mapsto \lambda g_\tau(\lambda)$ is Lipschitz continuous on $[0,\kappa^2]$ with Lipschitz constant $L(\tau)\lesssim \tau^p$, $ p \ge 1 $.
Let $\beta:=\min\{\alpha,\nu\}$
and set
$$
\tau := \lceil N^{\frac{1}{2(\beta+p)}}\rceil .
$$
Then, for every $ t > 0 $, with probability at least $1-2e^{-t}$ we have
\begin{equation*}
\big\|{f-\widehat{f}_{\tau,N}}\big\|_\CH \lesssim \big\|{\Th^{-\alpha} f}\big\|_\CH \big({\kappa^{2(\alpha-\beta)}+\kappa^{2\alpha+2}\sqrt{t}}\big) N^{-\frac{\beta}{2(\beta+p)}}.
\end{equation*}
\end{thm}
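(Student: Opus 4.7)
The plan is to combine Propositions \ref{prop:approx_bound} and \ref{prop:estimation_bound} through the error decomposition \eqref{eqn:error_splitting} and then choose $\tau$ so as to balance the two contributions. The heavy lifting has already been done in those two preparatory propositions, so the remaining work is essentially an algebraic optimization in the scale parameter, together with one elementary norm comparison to bring both terms to the common prefactor $\|\Th^{-\alpha}f\|_\CH$.

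First, I would apply \eqref{eqn:error_splitting} and bound the approximation term $\|\sum_{j>\tau}\Th_j f\|_\CH$ directly by Proposition \ref{prop:approx_bound}, obtaining a deterministic estimate of order $\|\Th^{-\alpha}f\|_\CH \kappa^{2(\alpha-\beta)}\tau^{-\beta}$ with $\beta=\min\{\alpha,\nu\}$. For the estimation term, Proposition \ref{prop:estimation_bound} yields, with probability at least $1-2e^{-t}$, a bound of order $\|f\|_\CH \kappa^2\sqrt{t}\, L(\tau) N^{-1/2}$. To express this in terms of $\|\Th^{-\alpha}f\|_\CH$, I would use the source condition $f=\Th^\alpha h$ together with $\sigma(\Th)\subset[0,\kappa^2]$, which gives
\begin{equation*}
\|f\|_\CH = \|\Th^\alpha \Th^{-\alpha}f\|_\CH \le \|\Th\|^\alpha \|\Th^{-\alpha}f\|_\CH \le \kappa^{2\alpha}\|\Th^{-\alpha}f\|_\CH.
\end{equation*}
Inserting the hypothesis $L(\tau)\lesssim\tau^p$, the estimation contribution is then of order $\|\Th^{-\alpha}f\|_\CH \kappa^{2\alpha+2}\sqrt{t}\,\tau^p N^{-1/2}$.

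Second, I would balance the two resulting $\tau$-dependencies. The approximation term decays as $\tau^{-\beta}$ while the estimation term grows as $\tau^p N^{-1/2}$, so equating $\tau^{-\beta}\asymp\tau^p N^{-1/2}$ gives $\tau\asymp N^{1/(2(\beta+p))}$, and both terms then scale as $N^{-\beta/(2(\beta+p))}$. Choosing $\tau:=\lceil N^{1/(2(\beta+p))}\rceil$ differs from the exact minimizer only by a multiplicative constant, so the claimed bound follows by summing the two estimates under the event of probability at least $1-2e^{-t}$.

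The main obstacle in this chain is not in the theorem itself but in its inputs: the concentration bound for $\|\Th-\Themp\|_{\mathrm{HS}}$ used in Proposition \ref{prop:estimation_bound}, together with the Lipschitz stability of the spectral calculus map $\lambda\mapsto\lambda g_\tau(\lambda)$ (which is what introduces the factor $L(\tau)$ and, through Table \ref{tab:list_of_gs} and Lemma \ref{lem:lip}, forces the dependence on the filter family), and the qualification-based approximation estimate in Proposition \ref{prop:approx_bound}. Once these are granted, the present theorem amounts to tracking constants, invoking the source condition to homogenize the norm on $f$, and performing the bias--variance optimization in $\tau$.
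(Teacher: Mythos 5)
Your proposal is correct and follows essentially the same route as the paper's proof: decompose via \eqref{eqn:error_splitting}, invoke Propositions \ref{prop:approx_bound} and \ref{prop:estimation_bound}, and balance $\tau^{-\beta}$ against $\tau^p N^{-1/2}$. Your explicit step $\|f\|_\CH \le \kappa^{2\alpha}\|\Th^{-\alpha}f\|_\CH$ is exactly the constant-collection the paper leaves implicit, and it correctly produces the stated factor $\kappa^{2\alpha+2}\sqrt{t}$.
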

\begin{proof}
Starting from the decomposition \eqref{eqn:error_splitting},
we bound the two terms by Propositions \ref{prop:approx_bound} and \ref{prop:estimation_bound}.
The approximation error is $\CO(\tau^{-\beta})$, while the estimation error is $\CO(\tau^pN^{-1/2})$.
We thus choose $\tau$ to balance them out, and collect the constants.
\qed
\end{proof}

If $\supp \rho\neq\CX$, we have instead a frame on $\CH_\rho$,
and the corresponding resolution of the identity \(\Id_{\CH_\rho} = \sum_{j\geq0} \Th_j\).
The reconstruction error would thus include an additional bias term:
\begin{equation*}
\big\|{f- \widehat{f}_{\tau,N}}\big\|_\CH \leq \N{\Pmat_{\ker\Inc} f}_\CH + \big\|{\sum_{j>\tau} \Th_j f}\big\|_\CH + \Big\|\sum_{j=0}^\tau \big({\Th_j-\Themp_j}\big)f\Big\|_\CH .
\end{equation*}

Classical spectral functions from Table \ref{tab:list_of_gs} satisfy the assumptions of Theorem \ref{thm:error}.
We report the explicit rates in Table \ref{tab:list_of_error_rates}.
A convergence result for filters of Example \ref{ex:classical_filters} will be provided at the end of Section \ref{sec:Besov}.
 \begin{table}[h]
 \caption{Error rates for signals $ f \in \range(\Th^\alpha) $ and several spectral regularizers.}
 \label{tab:list_of_error_rates}
\centering
\ra{1.5}
\begin{tabular}{l l c l}
\multicolumn{1}{c}{\textsf{method}} & \multicolumn{1}{c}{\textsf{error rate in $\textN{\cdot}_\CH$}} & & \multicolumn{1}{c}{\textsf{error rate in $\textN{\cdot}_\rho$}} \\
\hline
{Tikhonov regularization} & $N^{-\frac{\min\{\alpha, 1\}}{2\min\{\alpha, 1\}+2}}$  & & $N^{-\frac{\min\{\alpha+1/2, 1\}}{2\min\{\alpha+1/2, 1\}+2}}$ \\
{iterated Tikhonov $(m)$} & $N^{-\frac{\min\{\alpha, m\}}{2\min\{\alpha, m\}+2}}$ & & $N^{-\frac{\min\{\alpha+1/2, m\}}{2\min\{\alpha+1/2, m\}+2}}$ \\
{Landweber iteration} & $N^{-\frac{\alpha}{2\alpha+2}}$ & & $N^{-\frac{\alpha+1/2}{2\alpha+3}}$ \\
{asymptotic regularization} & $N^{-\frac{\alpha}{2\alpha+2}}$ & & $N^{-\frac{\alpha+1/2}{2\alpha+3}}$ \\
{heavy ball $(\nu)$} & $N^{-\frac{\min\{\alpha, \nu\}}{2\min\{\alpha, \nu\}+4}}$ & & $N^{-\frac{\min\{\alpha+1/2, \nu\}}{2\min\{\alpha+1/2, \nu\}+4}}$ \\
{Nesterov acceleration} & $N^{-\frac{\min\{\alpha, \nu\ge1/2\}}{2\min\{\alpha, \nu\ge1/2\}+4}}$ & & $N^{-\frac{\min\{\alpha+1/2, \nu\ge1/2\}}{2\min\{\alpha+1/2, \nu\ge1/2\}+4}}$ \\
\hline
\end{tabular}
\end{table}

\paragraph{Convergence in $\Ltwo$}
Error rates in $\Ltwo$ can be extracted using the isometry between $\overline{\CH}_\rho$ and  $\CH_\rho$.
Suppose again for simplicity that $ \supp \rho = \CX $.
In view of \eqref{eq:sqrt(T)f}, for $f\in\CH_\rho=\CH$ we have
\begin{align*} \big\|{f-\widehat{f}_{\tau,N}}\big\|_{\rho}  &= \big\|{\sqrt{\Th}(f-\widehat{f}_{\tau,N})}\big\|_{\CH}.
\end{align*}
{Decomposing the error into its approximation and estimation components, we can repeat the same analysis as in the proof of Theorem \ref{thm:error}. 
The estimation bound simply gets an additional $\kappa$ factor.
Assuming $ f \in \Th^\alpha \CH$ with $\alpha>0$,
for the approximation term we have
\begin{align*}
\big\|{\sqrt{\Th} \sum_{j>\tau}\Th_j f}\big\|_\CH
& \leq \sup_{i\in\CI_\rho} \big({\lambda_i^{\beta}\LRP{1-\lambda_i g_\tau(\lambda_i)}}\big)\sum_{i\in\CI_\rho} \big\lvert{\big\langle\Th^{1/2-\beta} f,v_i\big\rangle_\CH}\big\rvert \\
& \lesssim \big\|{\Th^{-\alpha} f}\big\|_\CH \kappa^{2(\alpha-\beta)+1}\tau^{-\beta},
\end{align*}
with $\beta:=\min(\alpha+1/2,\nu)$.
Therefore, the approximation rate increases by $1/2$ (qualification permitting).
Combining all together, we obtain the following bound in $\Ltwo$.
\begin{cor}
Assume that $g_\tau$ has qualification $ \nu \in (0,\infty] $,
$f\in\range(\Th^\alpha)$ for some $ \alpha > 0 $,
and $\lambda\mapsto \lambda g_\tau(\lambda)$ is Lipschitz continuous on $[0,\kappa^2]$ with Lipschitz constant $L(\tau)\lesssim \tau^p$, $ p \ge 1 $.
Let $\beta:=\min\{\alpha+1/2,\nu\}$
and set
$$
\tau := \lceil N^{\frac{1}{2(\beta+p)}}\rceil .
$$
Then, for every $ t > 0 $, with probability at least $1-2e^{-t}$ we have
\begin{equation*}
\big\|{f-\widehat{f}_{\tau,N}}\big\|_{\rho}\lesssim \big\|{\Th^{-\alpha} f}\big\|_\CH \LRP{\kappa^{2(\alpha-\beta)+1}+\kappa^{2\alpha+3}\sqrt{t}} N^{-\frac{\beta}{2(\beta+p)}}.
\end{equation*}
\end{cor}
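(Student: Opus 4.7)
My plan is to reduce this $L^2$-bound to a computation in $\CH$ by exploiting the isometry \eqref{eq:sqrt(T)f}. Since $\supp\rho=\CX$, we have $\CH_\rho=\CH$, and hence for any $f\in\CH$,
\[
\big\|f-\widehat f_{\tau,N}\big\|_\rho \;=\; \big\|\sqrt{\Th}\,(f-\widehat f_{\tau,N})\big\|_\CH .
\]
I would then split the right-hand side along exactly the lines of \eqref{eqn:error_splitting}, writing
\[
\big\|\sqrt{\Th}(f-\widehat f_{\tau,N})\big\|_\CH \;\le\;\Big\|\sqrt{\Th}\sum_{j>\tau}\Th_j f\Big\|_\CH + \Big\|\sqrt{\Th}\sum_{j=0}^\tau(\Th_j-\Themp_j)f\Big\|_\CH ,
\]
and treat the approximation and estimation terms separately, as in Propositions \ref{prop:approx_bound} and \ref{prop:estimation_bound}.

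For the approximation term, I would mimic the proof of Proposition \ref{prop:approx_bound}, but absorbing the extra $\sqrt{\Th}$ into the spectral estimate. Using \eqref{eq:5}, $\sqrt{\Th}\sum_{j>\tau}\Th_j = \sqrt{\Th}(\Id-\Th g_\tau(\Th))$. Writing $f=\Th^\alpha h$ with $\|h\|_\CH=\|\Th^{-\alpha}f\|_\CH$ and expanding in the eigenbasis, the squared approximation error becomes $\sum_{i}\lambda_i^{2\alpha+1}|1-\lambda_i g_\tau(\lambda_i)|^2|\langle h,v_i\rangle_\CH|^2$. Factoring out $\lambda_i^{2(\alpha+1/2-\beta)}$ with $\beta:=\min\{\alpha+1/2,\nu\}$ and using the qualification bound $\lambda^\beta|1-\lambda g_\tau(\lambda)|\lesssim\tau^{-\beta}$, this yields
\[
\Big\|\sqrt{\Th}\sum_{j>\tau}\Th_j f\Big\|_\CH \;\lesssim\; \big\|\Th^{-\alpha}f\big\|_\CH\,\kappa^{2(\alpha-\beta)+1}\,\tau^{-\beta},
\]
that is, exactly the bound of Proposition \ref{prop:approx_bound} with $\beta$ shifted by $1/2$ (provided the qualification still covers it) and a single extra factor of $\kappa$.

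For the estimation term I would simply prepend $\sqrt{\Th}$ to the bound from Proposition \ref{prop:estimation_bound}. Since $\|\sqrt{\Th}\|\le\kappa$, the previous $L(\tau) N^{-1/2}$ bound picks up one factor of $\kappa$; then writing $\|f\|_\CH=\|\Th^\alpha h\|_\CH\le\kappa^{2\alpha}\|\Th^{-\alpha}f\|_\CH$ converts the dependence on $\|f\|_\CH$ into the source-condition quantity, producing
\[
\Big\|\sqrt{\Th}\sum_{j=0}^\tau(\Th_j-\Themp_j)f\Big\|_\CH \;\lesssim\; \big\|\Th^{-\alpha}f\big\|_\CH\,\kappa^{2\alpha+3}\sqrt{t}\,L(\tau)\,N^{-1/2}
\]
with probability at least $1-2e^{-t}$, via exactly the same Lipschitz linearization and concentration inequality used in Proposition \ref{prop:estimation_bound}.

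Finally I would balance the two rates $\tau^{-\beta}$ and $\tau^p N^{-1/2}$ by setting $\tau=\lceil N^{1/(2(\beta+p))}\rceil$, which produces the common order $N^{-\beta/(2(\beta+p))}$ and, after collecting constants, the stated high-probability inequality. I do not expect any serious obstacle here: the only delicate point is checking that the qualification $\nu$ really allows the $+1/2$ shift (i.e.\ that $\beta=\min\{\alpha+1/2,\nu\}$ is admissible in the qualification estimate), which is built into the definition of qualification. Everything else is a straightforward transcription of the arguments already given for the $\CH$-norm error, with $\sqrt{\Th}$ providing the extra spectral weight and the single extra $\kappa$ factor inside each term.
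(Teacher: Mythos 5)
Your proposal is correct and follows essentially the same route as the paper: it uses the isometry $\|h\|_\rho=\|\sqrt{\Th}h\|_\CH$ to reduce to the $\CH$-norm analysis, reruns the approximation bound with the extra spectral weight $\lambda^{1/2}$ (shifting $\beta$ to $\min\{\alpha+1/2,\nu\}$ and contributing one factor of $\kappa$), absorbs $\|\sqrt{\Th}\|\le\kappa$ into the estimation bound, and balances $\tau^{-\beta}$ against $\tau^pN^{-1/2}$ exactly as in Theorem \ref{thm:error}. The constants $\kappa^{2(\alpha-\beta)+1}$ and $\kappa^{2\alpha+3}$ you obtain match the statement, so there is nothing to add.
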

See Table \ref{tab:list_of_error_rates} for specific rates regarding spectral functions from Table \ref{tab:list_of_gs}.

\paragraph{Monte Carlo wavelet approximation as noiseless kernel ridge regression}

We conclude this section with an observation that draws a link between Monte Carlo wavelets and regression analysis.
Let $ \widehat{f}_{\tau,N} $ be the Monte Carlo wavelet approximation \eqref{eq:hftauN} of $ f \in \CH $
at resolution $\tau$ given samples $ x_1,\dots,x_N $.
Then
\begin{align*}
 \widehat{f}_{\tau,N} = \sum_{j=0}^\tau \prefilter(\Themp)^2 \Themp f = g_\tau(\Themp) \Themp f .
\end{align*}
With the choice of the Tikhonov filter $ g_j(\la) = ( \la + \tau^{-1} )^{-1} $ (Table \ref{tab:list_of_gs}),
recalling \eqref{eq:kernel_matrix}, \eqref{eq:sampling} and \eqref{eq:out-of-sample},
and defining
$$
\Vy = [ f(x_1) , \dots , f(x_N) ]^\top, \qquad \Val = \Big(\VK + \tfrac{N}{\tau} \VI \Big)^{-1} \Vy ,
$$
we have
 \begin{align*}
 \widehat{f}_{\tau,N}
 & = \big(\Themp + \tfrac{1}{\tau} \Id_\CH \big)^{-1} \Themp f
 = \Big(\hInc^*\hInc + \tfrac{1}{\tau} \Id_\CH \Big)^{-1} \hInc^*\hInc f
 = \Big(\hInc^*\hInc + \tfrac{1}{\tau} \Id_\CH \Big)^{-1} \hInc^* \Vy \\
 & = \hInc^* \Big(\hInc\,\hInc^* + \tfrac{1}{\tau} \VI \Big)^{-1} \Vy
 = \frac{1}{N} \sum_{i=1}^N K(\cdot,x_i) \Big[ \Big(\tfrac{1}{N}\VK + \tfrac{1}{\tau} \VI \Big)^{-1} \Vy\Big][i] \\
 & = \sum_{i=1}^N K(\cdot,x_i) \Big[ \Big(\VK + \tfrac{N}{\tau} \VI \Big)^{-1} \Vy \Big][i] = \sum_{i=1}^N \Val[i] K(\cdot,x_i) .
\end{align*}
This is the (unique) solution to the kernel regularized least squares problem
\begin{align} \label{eq:RLS}
 \min_{\widehat{f} \in \CH} \frac{1}{N} \sum_{i=1}^N | y_i - \widehat{f}(x_i) |^2 + \la \| \widehat{f} \|_\CH^2 ,
\end{align}
where $ y_i = \Vy[i] $ and $ \la = \tau^{-1} $.
Therefore, $ \widehat{f}_{\tau,N} $ is the kernel ridge estimator for the noiseless regression problem
\begin{align*}
 y_i = f(x_i) \qquad i = 1,\dots,N ,
\end{align*}
and the squared reconstruction error $ \| f - \widehat{f}_{\tau,N} \|_\rho^2 $ is the generalization error of $\widehat{f}_{\tau,N}$.

Contrasting this with the optimal rate (in the minimax sense) for kernel ridge regression \cite{caponnetto-devito}
entails that the rate in Table \ref{tab:list_of_error_rates} is suboptimal for Tikhonov regularization,
and presumably for all other regularizers.
This is well expected from the crude Lipschitz bound used in Proposition \ref{prop:estimation_bound}.
The scope of the present work was to establish
a first result of convergence of randomly sampled frames, rather than identifying the optimality of the convergence rates.
Refinement of our bounds will be object of future investigation (see also Remark \ref{rmk:sharper}).

\section{Sobolev and Besov spaces in RKHS} \label{sec:Besov}

\providecommand{\embed}{{\mathrm{A}}}
\providecommand{\embedouter}{{\mathrm{B}}}
\providecommand{\soboleviso}{{\mathrm{U}}}

The convergence rates of the frame reconstruction error in Theorem \ref{thm:error}
depend on the approximation rates in Proposition \ref{prop:approx_bound},
hence on the regularity of the original signal $f$, as quantified by the condition $f\in\range(\Th^\alpha)$.
Thinking of $\Th$ as the inverse square root of the Laplacian
allows to interpret $\range(\Th^\alpha)$ as a Sobolev space.
The theory of smoothness function spaces \cite{triebel1992} plays a critical role in harmonic analysis,
and serves also as a base for the definition of statistical priors in learning theory \cite{binev2005}.
In this section we {examine} general notions of regularity and
their effect on the reconstruction error.
Many of the reported results on Besov spaces are well known \cite{fefupe16},
but we nonetheless include them here to be self contained and to adapt them to our setting and notation.
In particular, as already observed in Section \ref{sec:graphs},
it should be borne in mind that
the spectrum of the integral operator $\Th$ has inverse trend compared to that of a Laplace operator,
and therefore all the spectral definitions of the generalized Besov spaces must take this into account
in order to preserve the consistency with their classical counterparts.
As in the previous section, we assume $\supp (\rho) = \CX$.

\paragraph{Sobolev spaces as domains of powers of a positive operator}
By virtue of the spectral theorem, for every $\alpha>0$, $\Th^\alpha$ is a positive, bounded, injective operator on $\CH$,
with $ \sigma(\Th^\alpha) \subset (0,\kappa^{2\alpha}] $.
Thus,  $\Th^{-\alpha}$ is a positive, closed, densely-defined, injective operator with $ \sigma(\Th^{-\alpha}) \subset [\kappa^{-2\alpha},\infty) $.
We put the following
\begin{dfn}[Sobolev spaces]
For $ \alpha > 0 $, we define the \emph{Sobolev space} $\CH^\alpha$ by
$$
\CH^\alpha := \dom(\Th^{-\alpha}) = \range(\Th^\alpha) ,
$$
equipped with the norm
$$
\N{v}_{\CH^\alpha} := \N{\Th^{-\alpha} v}_\CH .
$$
\end{dfn}
$\Ch^\alpha$ is a Hilbert space. Moreover, we have
$$
 \CH^\alpha = \Big\{f \in \CH \,:\, \sum_{i\in\CI_\rho} \lambda_i^{-2\alpha} \SN{\EUSP{f}{v_i}_\CH}^2 < \infty\Big\} ,
$$
which expresses $\CH^\alpha$ in terms of the speed of decay of the Fourier coefficients,
thus generalizing the standard Sobolev spaces $ H^\alpha = W^{\alpha,2} $.
Theorem \ref{thm:error} establishes the convergence of Monte Carlo wavelets
for signals in the class $\CH^\alpha$.

\paragraph{Besov spaces as approximation spaces}
Besov spaces on Euclidean domains are traditionally defined by the decay of the modulus of continuity.
A characterization that is best suited to generalize to arbitrary domains,
and to which we also adhere,
is through approximation and interpolation spaces \cite{fefupe16,P81,triebel1992}.
We begin with the approximation perspective by defining a scale of Paley--Wiener spaces.

\begin{dfn}[Paley--Wiener spaces]
For $ \omega > 0 $, the \emph{Paley--Wiener} space $\PW(\omega)$ is defined by
\begin{equation*} 
 \PW(\omega) := \left\{ f \in \CH\,: \EUSP{f}{v_i}_\CH = 0 \text{ for }\lambda_i < \omega^{-1} \right\} = \overline{\spn} \left\{v_i : \lambda_i \ge \omega^{-1}\right\} .
\end{equation*}
The associated approximation error for $f\in\CH$ is
\[
 \CE(f,\omega) := \inf_{g\in\PW(\omega)} \N{f - g}_\CH = \N{\Pmat_{\PW(\omega)^\perp} f}_\CH = \Big({\sum_{\lambda_i<\omega^{-1}} \SN{\EUSP{f}{v_i}_\CH}^2}\Big)^{1/2} .
\]
\end{dfn}

The space $ \PW(\omega) $ is a closed subspace of $\CH$, and $ \bigcup_{w>0} \PW(\omega) $ is dense in $\CH$.
Note that $ \CE(f,\omega)\xrightarrow{\omega\to 0}\N{f}_\CH $ and $ \CE(f,\omega)\xrightarrow{\omega\to\infty}0$. 
Approximation spaces classify functions in $\CH$ according to the rate of decay of their approximation error.

\begin{dfn}[Besov  spaces]
For $ s > 0 $ and $ q \in [1,\infty) $, we define the \emph{Besov space} $ \CB^s_q $ as the approximation space
\begin{equation*}
  \CB^s_q:= \left\{ f \in \CH \,: \LRP{\int_0^\infty (\omega^{s} \CE(f,\omega))^q \frac{d\omega}{\omega}}^{1/q} < \infty \right\} ,
\end{equation*}
equipped with the norm
\begin{equation} \label{eqn:Bsq_condition}
 \N{f}_{\CB^s_q} := \N{f}_\CH + \LRP{\int_0^\infty (\omega^{s} \CE(f,\omega))^q \frac{d\omega}{\omega}}^{1/q} .
\end{equation}
The space $ \CB^s_\infty $ is defined with the usual adjustment.
\end{dfn}
Discretizing the integral in  \eqref{eqn:Bsq_condition}, we obtain the equivalent norm
\begin{equation} \label{eqn:Bsq_norm}
 \N{f}_\CH + \Big( \sum_{j\geq0} \left(2^{j s} \CE(f,2^j)\right)^q \Big)^{1/q} \asymp \N{f}_{\CB^s_q} .
\end{equation}
In particular, a function $ f \in \CB^s_q $ if and only if the sequence $ \left(2^{j s} \CE(f,2^j)\right)_{j\ge0} \in \ell^q $.
It is easy to see that the scale of spaces $\CB^s_q$ obeys the following lexicographical order \cite[Proposition 3]{P81}:
\begin{align}
 &\CB^{s}_q \supset \CB_p^{t} \quad \text{for } s < t , 
 \label{eqn:Bsq_inclusions_s} \\
 & \CB^s_q\subset \CB^s_p \quad \text{for } q < p .
 \nonumber
\end{align}

\paragraph{Besov spaces as interpolation spaces}
The Sobolev space $ \CH^\alpha$ is continuously embedded into $\CB^s_q $ for every $ \alpha > s $. 
Indeed, for $ f \in \CH^\alpha $ we have the Jackson-type inequality $\CE(f,\omega) \le \omega^{-\alpha} \| f \|_{\CH^\alpha} $, hence
\begin{align*}
 \sum_{j\geq0} (2^{j s} \CE(f,2^j))^q \le \N{f}_{\CH^\alpha}^q \sum_{j\geq0} 2^{-j q(\alpha-s)} < \infty .
\end{align*}
Furthermore, $\CB^s_q$ interpolates between $\CH^\alpha$ and $\CH$.
\begin{dfn}[interpolation spaces]
For quasi-normed spaces $\VE$ and $\VF$, $ \theta \in (0,1) $ and $q \in (0,\infty) $,
the quasi-normed \emph{interpolation space} $\LRP{\VE,\VF}_{\theta, q}$ is defined by
\[
\LRP{\VE,\VF}_{\theta, q} := \left\{ f\in\VE+\VF \,: \int_0^\infty \LRP{t^{-\theta} \CK(f,t)}^q \frac{dt}{t} <\infty \right\} ,
\]
 where $\CK(f,t)$ is Peetre's $K$-functional
\[ \CK(f,t) := \inf_{\substack{ f_0 + f_1=f \\ f_0 \in \VE , f_1 \in \VF }} \N{f_0}_{\VE} + t\N{f_1}_{\VF} . \]
The space $ \LRP{\VE,\VF}_{\theta, \infty} $ is defined with the usual adjustment.
\end{dfn}
Standard interpolation theory \cite{fefupe16,triebel1992} gives
\begin{equation} \label{eq:Bsq-inter}
 \CB^s_q = (\CH,\CH^\alpha)_{\frac{s}{\alpha},\,q} \quad \text{for } s \in (0,\alpha) \text{ and } q \in [1,\infty] ,
\end{equation}
with
\begin{equation} \label{eq:Bsq-K}
 \N{f}_{\CB^s_q} \asymp \N{f}_\CH + \left( \int_0^\infty \LRP{t^{-\theta} \CK(f,t)}^q \frac{dt}{t} \right)^{1/q} .
\end{equation}
In the next proposition we show that, as in the Euclidean setting,
the Besov space $\CB^s_2$ coincides with the Sobolev space $\CH^s$ of the same order.
{As in the classical setting, this is particular to the case $ q = 2 $.}
This is probably a known fact,
but we could find neither a proof nor a statement.
\begin{prop}
 For every $ s > 0 $, $ \CB^s_2 = \CH^s $ with equivalent norms.
\end{prop}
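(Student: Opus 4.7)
The plan is to verify the identity by spectral decomposition in the eigenbasis $\{v_i\}_{i\in\CI_\rho}$ of $\Th$. In a Hilbert scale defined by powers of a self-adjoint operator, the case $q=2$ is special because it allows the Besov norm—defined through the $\ell^q$ summability of the approximation errors—to collapse, via Fubini, into a weighted $\ell^2$ norm on the Fourier coefficients; this weight turns out to be exactly the one defining the Sobolev norm.

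First I would start from the discretized equivalent norm \eqref{eqn:Bsq_norm}, write
\[
  \CE(f,2^j)^2 = \sum_{\substack{i\in\CI_\rho\\ \lambda_i<2^{-j}}} \SN{\EUSP{f}{v_i}_\CH}^2 ,
\]
and exchange the order of summation (all terms are non-negative) to obtain
\[
  \sum_{j\geq0}\bigl(2^{js}\CE(f,2^j)\bigr)^2
  = \sum_{i\in\CI_\rho} \SN{\EUSP{f}{v_i}_\CH}^2 \sum_{\substack{j\geq0\\ 2^{-j}>\lambda_i}} 2^{2js}.
\]
Next I would estimate the inner geometric sum: the set of admissible $j$ is $\{0,1,\dots,\lceil\log_2(1/\lambda_i)\rceil-1\}$ when $\lambda_i<1$ and empty when $\lambda_i\geq1$, so the sum is comparable to $\lambda_i^{-2s}$ in the former case and vanishes in the latter, with constants depending only on $s$. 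Adding $\N{f}_\CH^2=\sum_i\SN{\EUSP{f}{v_i}_\CH}^2$ and using $\lambda_i\in(0,\kappa^2]$ to absorb the range $\lambda_i\geq1$ (where $\lambda_i^{-2s}$ lies between $\kappa^{-4s}$ and $1$), I would conclude
\[
  \N{f}_{\CB^s_2}^2 \asymp \sum_{i\in\CI_\rho} \lambda_i^{-2s}\SN{\EUSP{f}{v_i}_\CH}^2 = \N{\Th^{-s}f}_\CH^2 = \N{f}_{\CH^s}^2,
\]
with equivalence constants depending only on $s$ and $\kappa$.

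The main point demanding care is the bookkeeping near the boundary $\lambda_i\sim1$, where the split between the $\N{f}_\CH$ and approximation-error parts of the Besov norm must be reconciled with the pure power-weight Sobolev sum; the bound $\lambda_i\leq\kappa^2$ is what makes these weights equivalent on the high-eigenvalue portion of the spectrum. An equivalent route, slightly more abstract, would be to invoke the interpolation identity \eqref{eq:Bsq-inter}, namely $\CB^s_2=(\CH,\CH^\alpha)_{s/\alpha,2}$ for any $\alpha>s$, together with the classical Hilbert-scale fact that real interpolation with index $q=2$ between $\CH$ and $\dom(\Th^{-\alpha})$ recovers $\dom(\Th^{-s})$; this is just the spectral calculus computation above rephrased in terms of Peetre's $K$-functional \eqref{eq:Bsq-K}.
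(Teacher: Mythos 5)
Your argument is correct, but it follows a genuinely different route from the paper's. You work directly from the approximation-space definition: starting from the discretized norm \eqref{eqn:Bsq_norm}, you expand $\CE(f,2^j)^2=\sum_{\lambda_i<2^{-j}}\SN{\EUSP{f}{v_i}_\CH}^2$, swap the two sums by Tonelli, and evaluate the resulting geometric sum $\sum_{j\ge0,\,2^{-j}>\lambda_i}2^{2js}\asymp\lambda_i^{-2s}$ (empty when $\lambda_i\ge1$, which is exactly where the added $\N{f}_\CH^2$ term and the bound $\lambda_i\le\kappa^2$ take over). This yields $\N{f}_{\CB^s_2}^2\asymp\sum_{i\in\CI_\rho}\lambda_i^{-2s}\SN{\EUSP{f}{v_i}_\CH}^2=\N{f}_{\CH^s}^2$ with constants depending only on $s$ and $\kappa$, and the bookkeeping near $\lambda_i\sim1$ is handled properly. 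The paper instead goes through the interpolation characterization $\CB^s_2=(\CH,\CH^{2s})_{1/2,2}$, identifies Peetre's $K$-functional with a Tikhonov-type variational problem, computes its minimizer via the polar decomposition of the embedding $\CH^{2s}\hookrightarrow\CH$, and integrates against the spectral measure to land on $\dom(\Th^{-s})$. Your computation is more elementary and self-contained (no polar decomposition, no spectral-measure integral, and it makes the equivalence constants explicit); the paper's argument is structured so as to exercise the interpolation identity \eqref{eq:Bsq-inter} that it has just introduced, and its $K$-functional computation is the piece that would generalize to other interpolation couples. Your closing remark correctly identifies the two routes as equivalent: the $\arctan$ integral in the paper is precisely the continuous analogue of your dyadic geometric sum. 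One small point worth making explicit in a final write-up is that the sum $\sum_{i\in\CI_\rho}\lambda_i^{-2s}\SN{\EUSP{f}{v_i}_\CH}^2$ being finite is equivalent to $f\in\range(\Th^s)$ because $\supp(\rho)=\CX$ is assumed in that section, so $\Th$ is injective on $\CH=\CH_\rho$; otherwise one would have to account for the component of $f$ in $\ker\Inc$.
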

\begin{proof}
Let $ \al = 2 s $.
Then \eqref{eq:Bsq-inter} and \eqref{eq:Bsq-K} give $ \CB^s_2 = (\CH,\CH^{\al})_{\frac{s}{\al},\,2} = (\CH,\CH^{2s})_{\frac{1}{2},\,2} $ and
\begin{equation} \label{eq:K}
 \N{f}_{\CB^s_2}^2 \asymp \N{f}_\CH^2 + \int_0^\infty t^{-1} \CK(f,t)^2 \frac{dt}{t} . 
\end{equation}
Let $ \embed : \CH^\al \to \CH $ denote the canonical embedding $ \embed g = g $. Then, for $ f \in \CH $ and $ t > 0 $ we have
\begin{align} \label{eq:K-F}
 \CK(f,t)^2
 & = \inf_{\substack{ f_0 + \embed g=f \\ f_0 \in \CH , g \in \CH^\al }} (\N{f_0}_\CH + t \N{g}_{\CH^\al})^2 \nonumber \\
 & = \inf_{g \in \CH^\al} (\N{f - \embed g}_\CH + t \N{g}_{\CH^\al})^2
 \asymp \CG(f,t^2) ,
 \end{align}
with
 \[
 \CG(f,\la) := \inf_{g \in \CH^\al} \N{f - \embed g}_\CH^2 + \la \N{g}_{\CH^\al}^2 .
 \]
This infimum is attained by $ g = (\embed^*\embed + \la \Id_{\CH^\alpha})^{-1} \embed^*f $.
 Since
 \[
 (\embed^*\embed + \la \Id_{\CH^\alpha})^{-1} \embed^* = \embed^* (\embed\embed^* + \la \Id_\CH)^{-1} ,
 \]
 defining $ \embedouter := \embed\embed^* :\CH\rightarrow\CH$ we obtain
 \[
 \embed (\embed^*\embed + \la \Id_{\CH^\alpha})^{-1} \embed^* = \embedouter ( \embedouter + \la \Id_{\CH})^{-1} .
 \]
 Let $ \embed^* = \soboleviso (\embed\embed^*)^{1/2} = \soboleviso \embedouter^{1/2} $ be the polar decomposition of $\embed^*$, where $ \soboleviso : \CH \to \CH^\al $ is unitary. We have
 \[
 \CG(f,\la) = \N{(\Id_{\CH^\alpha} - \embedouter( \embedouter + \la \Id_{\CH^\alpha} )^{-1}) f}_\CH^2 + \la \| \soboleviso \embedouter^{1/2} (\embedouter + \la \Id_{\CH^\alpha})^{-1} f \|_{\CH^\al}^2.
 \]
Since
 \((\Id_{\CH} - \embedouter( \embedouter + \la \Id_{\CH} )^{-1} ) ( \embedouter + \la \Id_{\CH}) = \la \Id_{\CH},\)
 it follows that
\begin{align} \label{eq:F}
 \CG(f,\la) & = \la^2 \| (\embedouter + \la \Id_{\CH})^{-1} f \|_\CH^2 + \la \| \embedouter^{1/2}(\embedouter + \la \Id_{\CH})^{-1} f \|_\CH^2 \nonumber \\
 & = \la \left[ \la \langle (\embedouter + \la \Id_{\CH})^{-2} f , f \rangle_\CH + \langle \embedouter (\embedouter + \la \Id_{\CH})^{-2} f , f \rangle_\CH \right] \nonumber  \\
 & = \la \langle (\embedouter + \la \Id_{\CH})^{-2} (\la \Id_\CH + \embedouter) , f \rangle_\CH \nonumber \\
 & = \la \langle (\embedouter + \la \Id_{\CH})^{-1} f , f \rangle_\CH . 
\end{align}
Plugging \eqref{eq:K-F} and \eqref{eq:F} into \eqref{eq:K} we get
\begin{align*}
& \int_0^\infty t^{-1} \CK(f,t)^2 \frac{dt}{t} \asymp \int_0^\infty t^{-1} \CG(f,t^2) \frac{dt}{t} \\
 =& \int_0^\infty \EUSP{(\embedouter + t^2 \Id_{\CH^\alpha})^{-1} f}{f}_\CH dt
 = \int_0^\infty \int_0^\infty \frac{1}{\sigma + t^2} \EUSP{d\pi_{\embedouter}(\sigma) f}{f}dt ,
\end{align*}
where $ \pi_{\embedouter}$ is the spectral measure of ${\embedouter}$.
By Fubini we have
\begin{align*}
& \int_0^\infty \int_0^\infty \frac{1}{\sigma + t^2} dt \ \EUSP{d\pi_{\embedouter}(\sigma) f}{f} 
= \int_0^\infty \frac{1}{\sqrt{\sigma}} \arctan\Big(\frac{t}{\sqrt{\sigma}}\Big) \bigg|_0^\infty \EUSP{d\pi_{\embedouter}(\sigma) f}{f} \\
\asymp & \int_0^\infty \sigma^{-1/2} \EUSP{d\pi_{\embedouter}(\sigma) f}{f}
=\langle{\embedouter^{-1/2} f,f}\rangle_\CH= \big\|{\embedouter^{-1/4} f}\big\|_\CH^2 .
\end{align*}
Therefore, $ f \in \CB^s_2 $ if and only if $ f \in \dom(\embedouter^{-1/4}) $.
It now suffices to show $ \embedouter^{-1/4} = \Th^{-s} $, {whence $ \textN{\embedouter^{-1/4} f}_\CH^2 = \textN{f}_{\CH^s}^2 $.}
For any $ f \in \CH $ and $ g \in \CH^\al $ we have
\begin{align*}
\EUSP{f}{\embed g}_{\CH} = \EUSP{\embed^* f}{g}_{\CH^\al} = \EUSP{\Th^{-\al} \embed \embed^*f}{\Th^{-\al} \embed g}_\CH = \EUSP{\Th^{-2\al} \embedouter f}{g}_\CH .
\end{align*}
{Since $\CH^\al$ is dense in $\CH$, this implies} $ \Th^{-2\al} \embedouter = \Id_{\CH} $. 
Hence, $ \embedouter = \Th^{2\al} = \Th^{4s} $, which completes the proof.
\qed
\end{proof}

\paragraph{Besov spaces by wavelets coefficients}
The Besov norm can also be expressed by means of wavelet coefficients.
Let
$$
\filter(\lambda):=\sqrt{\lambda}\prefilter(\lambda) ,
$$
where $\prefilter$ is a filter as in Definition \ref{defn:filter}.
The partition of unity \eqref{eqn:part_unity} becomes
\begin{equation} \label{eq:part_unity-2}
\sum_{j\geq0}\filter(\lambda)^2=1 \quad \text{for all } \lambda\in(0,\kappa^2] .
\end{equation}
Moreover, in view of \eqref{eq:Fj(T)f}, for a frame $ \VPsi $ as in Definition \ref{defn:frames} we have
$$
\N{ \EUSP{f}{\psi_{j,\cdot}} }_{\Ltwo} = \N{\filter(\Th) f}_\CH ,
$$
and the frame property \eqref{eqn:frame_prop_H} can be rewritten as
\begin{equation} \label{eq:sumjFj(T)f}
 \N{f}_\CH^2 = \sum_{j\ge 0} \N{\filter(\Th) f}_\CH^2 .
\end{equation}
If we further assume the localization property (cf. Example \ref{ex:classical_filters})
\begin{equation} \label{eq:suppFj}
 \supp( F_0 ) \subset (2^{-1},\infty) , \qquad  \supp( F_j ) \subset ( 2^{-j-1} , 2^{-j+1} ) \quad \text{for } j \ge 1 ,
\end{equation}
a weighted $\ell^q$-norm of the sequence $ (\N{\filter(\Th) f}_\CH)_{j\ge 0} $ gives an equivalent characterization of the space $ \CB^s_q $.
 \begin{prop}[ {\cite[Theorem 3.18]{fefupe16}} ] \label{prop:norm_equivalence}
 Let $ \{\filter\}_{j\geq0} $ be a family of measurable functions $ \filter : [0,\infty) \to [0,\infty) $
 satisfying \eqref{eq:part_unity-2} and \eqref{eq:suppFj}.
 Then, for every $ f \in \CB^s_q$ we have
  \[
   \N{f}_{\CB^s_q} \asymp {\N{f}}_\CH + \Big( \sum_{j\geq0} \LRP{2^{j s} \N{\filter(\Th) f}_\CH }^q \Big)^{1/q}. 
   \]
 \end{prop}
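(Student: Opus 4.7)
The plan is to reduce to the discrete form of the Besov norm \eqref{eqn:Bsq_norm} and to establish the equivalence between the two series directly. Setting $e_j := \CE(f,2^j)$, $b_j := \N{F_j(\Th) f}_\CH^2$, and $a_i := |\EUSP{f}{v_i}_\CH|^2$, the spectral decomposition of $\Th$ gives
\[
e_j^2 = \sum_{\lambda_i < 2^{-j}} a_i , \qquad b_j = \sum_{i\in\CI_\rho} F_j(\lambda_i)^2 \, a_i .
\]
I will derive pointwise comparisons between the sequences $(e_j)$ and $(b_j)$ using the support condition \eqref{eq:suppFj} and the partition of unity \eqref{eq:part_unity-2}, and then transfer them to the weighted $\ell^q$ norms by means of a discrete Hardy inequality.

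The first key comparison follows from the observation that for $\lambda_i < 2^{-j}$, the support condition \eqref{eq:suppFj} forces $F_k(\lambda_i) = 0$ for all $k < j$; hence, by \eqref{eq:part_unity-2}, $\sum_{k\ge j} F_k(\lambda_i)^2 = 1$ on $\{\lambda_i < 2^{-j}\}$, which yields
\[
e_j^2 \le \sum_{k\ge j} b_k .
\]
The second comparison uses that $F_j(\lambda)^2 \le 1$ (again by \eqref{eq:part_unity-2}) together with $\supp F_j \subset (0, 2^{-j+1})$ for $j \ge 1$, so that
\[
b_j \le \sum_{\lambda_i < 2^{-j+1}} a_i = e_{j-1}^2 \quad (j \ge 1) , \qquad b_0 \le \N{f}_\CH^2 .
\]

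The lower bound on the wavelet-coefficient seminorm then follows directly from the second comparison by taking $q$-th powers and shifting the index:
\[
\sum_{j\ge 0} 2^{jsq} b_j^{q/2} \lesssim \N{f}_\CH^q + \sum_{j\ge 0} 2^{jsq} e_j^q .
\]
For the upper bound, subadditivity of the square root applied to the first comparison gives $e_j \le \sum_{k\ge j} b_k^{1/2}$, and the discrete Hardy inequality, which holds for $s>0$ and $q\in[1,\infty]$, then produces
\[
\sum_{j\ge 0} 2^{jsq} e_j^q \le C(s,q) \sum_{j\ge 0} 2^{jsq} b_j^{q/2} .
\]
Adding $\N{f}_\CH$ to both sides and invoking \eqref{eqn:Bsq_norm} yields the claimed equivalence; the case $q=\infty$ is handled by the usual sup-adjustment.

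The main subtlety is the mismatch between the $\ell^2$-type quantity $(\sum_{k \ge j} b_k)^{1/2}$ naturally produced by the spectral truncation defining $e_j$, and the pointwise $b_j^{1/2}$ appearing in the target expression. Subadditivity of the square root bridges the two only up to a lossy bound, but the loss is fully compensated by the geometric weight $2^{js}$ with $s>0$ through Hardy's inequality, leaving a final constant depending only on $s$ and $q$. The finite-overlap structure granted by \eqref{eq:suppFj} is what guarantees that at each frequency only a bounded number of filters contribute, which is essential both for the identity $\sum_{k\ge j} F_k(\lambda_i)^2 = 1$ on $\{\lambda_i < 2^{-j}\}$ and for the pointwise bound $F_j^2 \le 1$.
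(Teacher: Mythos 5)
Your proof is correct and follows essentially the same route as the paper's: the spectral support condition \eqref{eq:suppFj} plus the partition of unity \eqref{eq:part_unity-2} give the two comparisons $\CE(f,2^j)^2\le\sum_{k\ge j}\N{\filter[k](\Th)f}_\CH^2$ and $\N{\filter(\Th)f}_\CH\le\CE(f,2^{j-1})$, after which subadditivity of the square root and the discrete Hardy inequality (Lemma \ref{lem:Hardy_ineq}, with $p=1$) transfer the bounds to the weighted $\ell^q$ norms. If anything, your handling of the converse direction (bounding $b_j$ by $e_{j-1}^2$ rather than $e_j^2$) is slightly more careful about the overlap of $\supp F_j$ with $[2^{-j},2^{-j+1})$ than the paper's write-up, at the harmless cost of a factor $2^{sq}$.
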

 \begin{proof}
 We upper and lower bound the discretized norm in \eqref{eqn:Bsq_norm}.
Using \eqref{eq:sumjFj(T)f} (which holds thanks to \eqref{eq:part_unity-2}) and \eqref{eq:suppFj}, we have
  \begin{align*}
  & \CE(f,2^\ell)^2
= \N{\Pmat_{\PW(2^\ell)^\perp} f}_\CH^2 = \sum_{j\geq0}\N{\filter(\Th) \Pmat_{\PW(2^\ell)^\perp} f}_\CH^2 \\
 = & \sum_{j\geq0}\sum_{i\in\CI_\rho} \SN{\EUSP{\filter(\Th)\Pmat_{\PW(2^\ell)^\perp} f}{v_i}_\CH}^2
 = \sum_{j\geq0}\sum_{i\in\CI_\rho} \SN{\EUSP{\Pmat_{\PW(2^\ell)^\perp} f}{\filter(\Th) v_i}_\CH}^2 \\
 = & \sum_{j \ge 0} \sum_{ \substack{ \la_i < 2^{-\ell} \\ \la_i \in (2^{-j-1},2^{-j+1}) } } \hspace{-15pt} \filter(\lambda_i)^2 \SN{\EUSP{f}{v_i}_\CH}^2
 =  \sum_{j \geq \ell} \sum_{\la_i \in (2^{-j-1},2^{-j+1})} \filter(\lambda_i)^2 \SN{\EUSP{f}{v_i}_\CH}^2  \\
 = & \sum_{j\geq \ell} \sum_{i\in\CI_\rho}  \filter(\lambda_i)^2 \SN{\EUSP{f}{v_i}_\CH}^2
 = \sum_{j\geq \ell} \N{\filter(\Th) f}_\CH^2 .
  \end{align*}
Thus, by the discrete Hardy inequality (Lemma \ref{lem:Hardy_ineq}), we get
\begin{align*}
 \Big( \sum_{\ell\geq0} (2^{\ell s} \CE(f,2^\ell))^q \Big)^{1/q} & \le \Big( \sum_{\ell\geq0} \Big(2^{\ell s}  \sum_{j\ge \ell} \N{\filter(\Th) f}_\CH \Big)^q \Big)^{1/q} \\
 & \le C_{sq} \Big( \sum_{j\geq0}\left(2^{j s} \N{\filter(\Th) f}_\CH \right)^q \Big)^{1/q} ,
\end{align*}
with $C_{sq}=\frac{2^{sq}}{2^{sq}-1}$.
Conversely,
$\filter(\Th) g = 0$ for every $ g \in \PW(2^{j}) $, and therefore
$$
 \N{\filter(\Th) f}_\CH = \N{\filter(\Th) (f-g)}_\CH \le \N{\filter(\Th)}_\CH \N{f-g}_\CH \le \N{f-g}_\CH ,
$$
whence
\begin{equation*}
 \N{\filter(\Th) f}_\CH \le \inf_{g\in\PW(2^{j})} \N{f-g}_\CH = \CE(f,2^{j}) .
\end{equation*}
 \qed
\end{proof}

\paragraph{Convergence of spectrally-localized Monte Carlo wavelets}
Proposition \ref{prop:norm_equivalence} can be used to obtain
approximation bounds for frames
built with filters satisfying the localization property \eqref{eq:suppFj}.
\begin{prop} \label{prop:besov_approximation_bound}
Under the conditions of Proposition \ref{prop:norm_equivalence},
for every $ f \in \CB^s_q $ and $ \epsilon \in (0,s) $, we have
 \[
  \Big\|{\sum_{j>\tau} \Th_j f}\Big\|_\CH \lesssim \begin{cases}
   \N{f}_{\CB^s_q} 2^{-\tau s} & \text{for } q \in [1,2] \\
   \N{f}_{\CB^{s-\epsilon}_2} 2^{-\tau (s-\epsilon)} & \text{for } q \in (2,\infty]
  \end{cases} .
 \]
\end{prop}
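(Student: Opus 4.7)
}
The starting point is the identity $\Th_j = F_j(\Th)^2$, which follows from \eqref{eq:4} together with the defining relation $F_j(\lambda)=\sqrt{\lambda}\,G_j(\lambda)$. Combined with the triangle inequality in $\CH$ and the contractivity $\|F_j(\Th)\|\le 1$ (a direct consequence of the partition of unity \eqref{eq:part_unity-2}, since $F_j\ge 0$ and $\sum_j F_j^2\equiv 1$), one obtains
\[
\Big\|\sum_{j>\tau}\Th_j f\Big\|_\CH \;\le\; \sum_{j>\tau}\|F_j(\Th)^{2}f\|_\CH \;\le\; \sum_{j>\tau}\|F_j(\Th) f\|_\CH.
\]
This reduces the problem to a one-dimensional summation estimate against the weighted sequence $(2^{js}\|F_j(\Th) f\|_\CH)_{j\ge 0}$, which Proposition \ref{prop:norm_equivalence} controls by $\|f\|_{\CB^s_q}$.

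For $q\in[1,2]$, I would multiply and divide each summand by $2^{js}$ and apply Cauchy--Schwarz:
\[
\sum_{j>\tau}\|F_j(\Th) f\|_\CH \;=\; \sum_{j>\tau} 2^{-js}\bigl(2^{js}\|F_j(\Th) f\|_\CH\bigr) \;\le\; \Big(\sum_{j>\tau}2^{-2js}\Big)^{1/2}\Big(\sum_{j>\tau}(2^{js}\|F_j(\Th) f\|_\CH)^2\Big)^{1/2}.
\]
The first factor is a geometric series with ratio $2^{-2s}<1$, hence $\asymp 2^{-\tau s}$. For the second factor, since $q\le 2$, the embedding of sequence spaces $\ell^q\hookrightarrow\ell^2$ applied to the nonnegative sequence $(2^{js}\|F_j(\Th) f\|_\CH)_{j\ge 0}$ gives
\[
\Big(\sum_{j>\tau}(2^{js}\|F_j(\Th) f\|_\CH)^2\Big)^{1/2} \;\le\; \Big(\sum_{j\ge 0}(2^{js}\|F_j(\Th) f\|_\CH)^q\Big)^{1/q} \;\lesssim\; \|f\|_{\CB^s_q}
\]
by Proposition \ref{prop:norm_equivalence}. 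Chaining these bounds yields the first case.

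For $q\in(2,\infty]$, the previous Cauchy--Schwarz step is no longer available because $\ell^q\not\subset\ell^2$. I would instead invoke the Besov inclusion \eqref{eqn:Bsq_inclusions_s}: for any $\epsilon\in(0,s)$ we have $\CB^s_q\subset\CB^{s-\epsilon}_2$ with a continuous embedding, so $f\in\CB^{s-\epsilon}_2$ and $\|f\|_{\CB^{s-\epsilon}_2}\lesssim \|f\|_{\CB^s_q}$. Applying the already-proven case $q=2$ with smoothness $s-\epsilon$ then delivers the bound $\|\sum_{j>\tau}\Th_j f\|_\CH\lesssim \|f\|_{\CB^{s-\epsilon}_2}\,2^{-\tau(s-\epsilon)}$.

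The only delicate point I foresee is Step~2 (the $q\in[1,2]$ case), where both the $\ell^q\hookrightarrow\ell^2$ embedding and the tail-of-a-geometric-series estimate must be combined cleanly, and one must be careful that the norm-equivalence in Proposition \ref{prop:norm_equivalence} indeed controls the tail sum (not just the full sum), which it does trivially since all summands are nonnegative. The reduction to $q=2$ via embedding in Step~3 is essentially bookkeeping once \eqref{eqn:Bsq_inclusions_s} is read as allowing arbitrary $q$-indices.
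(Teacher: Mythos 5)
Your proposal is correct and follows essentially the same route as the paper's proof: reduce the left-hand side to the tail of the sequence $(\N{F_j(\Th)f}_\CH)_{j>\tau}$, control that tail via the embedding $\ell^q\hookrightarrow\ell^2$ together with Proposition~\ref{prop:norm_equivalence}, and settle $q\in(2,\infty]$ by the Besov inclusion \eqref{eqn:Bsq_inclusions_s} applied to the $q=2$ case. The only (harmless) deviation is in the intermediate reduction: the paper bypasses the triangle inequality and Cauchy--Schwarz by using the pointwise bound $\bigl|\sum_{j>\tau}F_j(\lambda_i)^2\bigr|^2\le\sum_{j>\tau}F_j(\lambda_i)^2$, which yields $\bigl\|\sum_{j>\tau}\Th_jf\bigr\|_\CH^2\le\sum_{j>\tau}\N{F_j(\Th)f}_\CH^2$ directly and extracts the factor $2^{-\tau s}$ from the weighted $\ell^q$ estimate, whereas you pass through the $\ell^1$ sum $\sum_{j>\tau}\N{F_j(\Th)f}_\CH$ and recover $2^{-\tau s}$ from the geometric series in the Cauchy--Schwarz step; both give the same rate.
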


\begin{proof}
By Proposition \ref{prop:norm_equivalence}, we have
 \begin{align*}
 \sum_{j>\tau} \N{\filter(\Th)f}_\CH^q = \sum_{j>\tau} 2^{-jsq} \left( 2^{j s} \N{\filter(\Th)f}_\CH \right)^q \lesssim 2^{-(\tau+1)s q} \N{f}_{\CB^s_q}^q .
\end{align*}
Also, \eqref{eq:part_unity-2} implies
$ \left\vert{\sum_{j} \filter(\lambda_i)^2}\right\vert^2 \le \sum_{j} \filter(\lambda_i)^2 $.
Hence, for $ q \le 2 $ we obtain
\begin{align*}
 \Big\|{\sum_{j>\tau} \Th_j f}\Big\|_\CH^2
 & = \sum_{i\in\CI_\rho} \Big\vert{\sum_{j>\tau} \filter(\lambda_i)^2}\Big\vert^2 \SN{\EUSP{f}{v_i}_\CH}^2
 \le \sum_{j>\tau}  \sum_{i\in\CI_\rho} \filter(\lambda_i)^2 \SN{\EUSP{f}{v_i}_\CH}^2 \\
 & = \sum_{j>\tau} \N{\filter(\Th) f}_\CH^2
 = \big\|{ \LRP{\N{F_{\tau+j}(\Th)f}}_{j\ge1} \big\| }_{\ell^2}^2 \\
 & \le \big\|{ \LRP{\N{F_{\tau+j}(\Th)f}}_{j\ge1} }\big\|_{\ell^q}^2
 \le \big({ 2^{-(\tau+1) s} \N{f}_{\CB^s_q} }\big)^2.
\end{align*}
If $q>2$, then $ \CB^s_q \subset \CB^{s-\epsilon}_2 $ for every $ \epsilon \in (0,s)$,
thanks to \eqref{eqn:Bsq_inclusions_s}, and the claim follows.
\qed
\end{proof}

Putting together Proposition \ref{prop:besov_approximation_bound} and Proposition \ref{prop:estimation_bound}  yields a convergence result for Monte Carlo wavelets with localized filters.
\begin{thm}\label{thm:Besov_cons}
Assume that $ \filter $ satisfies \eqref{eq:suppFj},
$ f \in \CB^s_q $ with $ q \in [1,2] $,
and $\lambda\mapsto \lambda g_\tau(\lambda)$ is Lipschitz continuous on $[0,\kappa^2]$ with Lipschitz constant $L(\tau)\lesssim 2^\tau$.
Set
$$
 \tau= \lceil{ {\tfrac{1}{2s+2}} \log_2 (N)}\rceil .
$$
Then, for every $ t > 0 $, with probability at least $1-2e^{-t}$ we have
\begin{equation*}
\big\|{f-\widehat{f}_{\tau,N}}\big\|_\CH \lesssim \N{f}_{\CB^s_q} \big({1+\kappa^{2}\sqrt{t}}\big) N^{-\frac{s}{2s+2}}.
\end{equation*}
\end{thm}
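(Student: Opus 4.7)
The plan is to combine the two previous propositions via the error splitting \eqref{eqn:error_splitting} and then tune $\tau$ to balance the approximation and estimation contributions. Since $f \in \CB^s_q$ with $q \in [1,2]$, I would first invoke Proposition \ref{prop:besov_approximation_bound} to bound the approximation error by
$$
\Big\|\sum_{j>\tau} \Th_j f\Big\|_\CH \lesssim \N{f}_{\CB^s_q}\, 2^{-\tau s}.
$$
Note that the assumption on $F_j$ (the localization \eqref{eq:suppFj}) implies the partition of unity \eqref{eq:part_unity-2} needed by Proposition \ref{prop:norm_equivalence}, so the hypotheses of Proposition \ref{prop:besov_approximation_bound} are met.

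Next I would apply Proposition \ref{prop:estimation_bound}. The Lipschitz hypothesis $L(\tau) \lesssim 2^\tau$ plugs directly into the bound, giving with probability at least $1-2e^{-t}$ that
$$
\Big\|\sum_{j=0}^\tau (\Th_j - \Themp_j) f\Big\|_\CH \lesssim \N{f}_\CH\, \kappa^2\,\sqrt{t}\, 2^\tau\, N^{-1/2}.
$$
Since $\N{f}_\CH \le \N{f}_{\CB^s_q}$ (this is immediate from the definition of the Besov norm), I can factor $\N{f}_{\CB^s_q}$ out of the total reconstruction error to obtain
$$
\big\|f - \widehat{f}_{\tau,N}\big\|_\CH \lesssim \N{f}_{\CB^s_q}\,\big(2^{-\tau s} + \kappa^2\sqrt{t}\, 2^\tau N^{-1/2}\big).
$$

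Finally, I would optimize $\tau$. Balancing the two terms via $2^{-\tau s} \asymp 2^\tau N^{-1/2}$ gives $2^{\tau(s+1)} \asymp N^{1/2}$, i.e.\ $\tau \asymp \tfrac{1}{2s+2}\log_2 N$, which is exactly the choice prescribed in the statement (the ceiling changes the constants only). Substituting back, both $2^{-\tau s}$ and $2^\tau N^{-1/2}$ are of order $N^{-s/(2s+2)}$, yielding the claimed rate.

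No step presents a real obstacle: the argument is essentially a mechanical combination of Propositions \ref{prop:besov_approximation_bound} and \ref{prop:estimation_bound}, mirroring the proof of Theorem \ref{thm:error} but with the Besov approximation estimate $2^{-\tau s}$ in place of the Sobolev estimate $\tau^{-\beta}$, and with $L(\tau) \lesssim 2^\tau$ in place of $L(\tau) \lesssim \tau^p$. The only mildly delicate point is passing from the $\N{f}_\CH$ appearing in Proposition \ref{prop:estimation_bound} to the $\N{f}_{\CB^s_q}$ appearing in the final statement, which is resolved by the trivial inequality $\N{f}_\CH \le \N{f}_{\CB^s_q}$ built into the Besov norm \eqref{eqn:Bsq_condition}.
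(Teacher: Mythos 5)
Your proposal is correct and matches the paper's (unwritten) argument exactly: the theorem is obtained by combining Proposition \ref{prop:besov_approximation_bound} with Proposition \ref{prop:estimation_bound}, using $\N{f}_\CH \le \N{f}_{\CB^s_q}$ and balancing $2^{-\tau s}$ against $2^\tau N^{-1/2}$. One tiny imprecision: the localization \eqref{eq:suppFj} does not \emph{imply} the partition of unity \eqref{eq:part_unity-2} — rather, \eqref{eq:part_unity-2} holds automatically because $\filter(\lambda)=\sqrt{\lambda}\,\prefilter(\lambda)$ with $\prefilter$ a family of filters satisfying \eqref{eqn:part_unity} — but the hypotheses of Proposition \ref{prop:besov_approximation_bound} are indeed met, so nothing is affected.
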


Compared to Theorem \ref{thm:error}, Theorem \ref{thm:Besov_cons} requires the resolution $\tau$ to grow only logarithmically with respect to the sample size $N$.
Note that the conditions of Theorem
\ref{thm:Besov_cons} exclude the spectral functions of Table \ref{tab:list_of_gs},
since they do not satisfy \eqref{eq:suppFj}.
Examples of admissible filters are given instead by Example \ref{ex:classical_filters},
which have local support \eqref{eq:suppFj} but exponential Lipschitz constant.

\section{Concluding remarks and future directions} \label{sec:conclusions}

We presented a construction of tight frames
which extends wavelets on general domains
based on spectral filtering of a reproducing kernel.
Depending on the measure considered, our construction leads to continuous or discrete frames,
covering non-Euclidean structures such as Riemannian manifolds and weighted graphs.
Besides standard frequency-localized filters commonly used in wavelet frames,
we defined admissible spectral filters resorting to methods from regularization theory,
such as Tikhonov regularization and Landweber iteration.
Regarding discrete measures as empirical measures arising from independent realizations of a continuous density,
we interpreted discrete frames as Monte Carlo estimates of continuous frames.
We proved that the Monte Carlo frame converges to the corresponding deterministic continuous frame,
and provided finite-sample bounds in high probability,
with rates that depend on the Sobolev or Besov class of the reproduced signal.
This demonstrates the stability of empirical frames built on sampled data.

In future work we intend to study the numerical implementation of our Monte Carlo wavelets,
along with possible applications in graph signal processing, regression analysis and denoising.
Further theoretical investigation may include $L^p$ Banach frame extensions,
sparse representations,
nonlinear approximation rates, Lipschitz bound refinements,
and explicit localization properties for specific families of kernels.

\begin{appendices}

\section{Appendix}

We recall the following result, whose proof can be collected from \cite{f2005abstract}.
\begin{lem}\label{dense}
Let $(\Omega;\mu)$ be a measure space and $\hh$ a Hilbert
space. Given a weakly measurable mapping $\omega\mapsto \Psi_\omega$ from
$\Omega$ to $\hh$, assume there exists a dense subset $\mathcal
D\subset \hh$, and a constant $C>0$, such that, for every $f\in  \mathcal D$,
\begin{equation}
  \label{eq:30}
  \int_{\Omega} \SN{\EUSP{f}{\Psi_\omega}_\hh}^2 d\mu(\omega) \leq C \|{f\|}^2 .
\end{equation}
Then~\eqref{eq:30} holds for every $f\in \hh$.
Furthermore, there exists a positive bounded operator $\Amat:\CH\rightarrow\CH$ such that,
for every $ f , g \in \hh $,
\begin{equation*}
  \EUSP{\Amat f}{g}_\CH = \int_{\Omega} \EUSP{f}{\Psi_\omega}_\hh
  \EUSP{\Psi_\omega}{g}_\hh d\mu(\omega) .
\end{equation*}
\end{lem}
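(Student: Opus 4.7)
The plan is to first extend the $L^2$ inequality from the dense subset $\CD$ to all of $\hh$ by a standard approximation argument, and then read off the operator $\Amat$ from the bounded sesquilinear form that the integral defines on $\hh\times\hh$.

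For the extension, fix $f\in\hh$ and pick a sequence $(f_n)\subset\CD$ with $f_n\to f$ in $\hh$. Applying the hypothesis to $f_n-f_m\in\CD$ shows that the functions $\omega\mapsto\langle f_n,\Psi_\omega\rangle_\hh$ form a Cauchy sequence in $L^2(\Omega,\mu)$, hence converge there to some $h\in L^2(\Omega,\mu)$. On the other hand, weak measurability ensures $\omega\mapsto\langle f,\Psi_\omega\rangle_\hh$ is measurable, and for every fixed $\omega$ we have $\langle f_n,\Psi_\omega\rangle_\hh\to\langle f,\Psi_\omega\rangle_\hh$ simply because $f_n\to f$ in $\hh$. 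Passing to an a.e.\ convergent subsequence of the $L^2$-convergent sequence identifies $h(\omega)=\langle f,\Psi_\omega\rangle_\hh$ a.e., and continuity of the $L^2$ norm together with the assumed bound give $\int_\Omega|\langle f,\Psi_\omega\rangle_\hh|^2\,d\mu(\omega)=\|h\|_{L^2}^2\le C\|f\|^2$. This establishes \eqref{eq:30} for every $f\in\hh$.

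Next, define the map
\begin{equation*}
 B(f,g) := \int_{\Omega} \langle f,\Psi_\omega\rangle_\hh \langle \Psi_\omega,g\rangle_\hh \, d\mu(\omega) , \qquad f,g\in\hh .
\end{equation*}
The Cauchy--Schwarz inequality in $L^2(\Omega,\mu)$, together with the extended bound, gives $|B(f,g)|\le C\|f\|\|g\|$, while linearity in $f$ and conjugate-linearity in $g$ are immediate from the sesquilinearity of the inner product and linearity of the integral. Thus $B$ is a bounded sesquilinear form on $\hh$, so by the Riesz representation theorem there exists a unique bounded operator $\Amat:\hh\to\hh$ with $\langle \Amat f,g\rangle_\hh=B(f,g)$ for all $f,g\in\hh$. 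Positivity follows from $\langle \Amat f,f\rangle_\hh=\int_\Omega|\langle f,\Psi_\omega\rangle_\hh|^2\,d\mu(\omega)\ge 0$.

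The only subtle step is the identification of the $L^2$-limit $h$ with $\langle f,\Psi_\cdot\rangle_\hh$: $L^2$ convergence alone does not imply pointwise equality, so the argument has to combine the $L^2$ Cauchy property (which comes from the hypothesis on $\CD$) with pointwise convergence coming from weak measurability, and resolve the two through an a.e.\ convergent subsequence. Everything else is a routine application of Cauchy--Schwarz and Riesz representation.
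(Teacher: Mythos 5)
Your proof is correct and follows essentially the same route as the paper: the paper extends \eqref{eq:30} by viewing $f\mapsto\langle f,\Psi_\cdot\rangle_\hh$ as a closed operator $\VV$ into $L^2(\Omega;\mu)$ and identifying the $L^2$-limit with the pointwise limit along an a.e.\ convergent subsequence, exactly as you do, and then sets $\Amat:=\VV^*\VV$, which is just a more compact packaging of your bounded-sesquilinear-form-plus-Riesz-representation step. (Both arguments implicitly use that $\CD$ is a linear subspace when applying the hypothesis to $f_n-f_m$, as is the case in the paper's application.)
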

\begin{proof}
For $f\in\CH$, define the measurable mapping
\[
\VV f:\Omega\to\bbC  \qquad \VV f(\omega) := \EUSP{f}{\Psi_\omega}_{\CH} .
\]
Let $ \CS :=\{ f\in\CH : \VV f\in L^2(\Omega;\mu) \} $.
The subspace $\CS$ is dense in $\CH$ since $ \CS \supset \CD $,
and the operator $\VV:\CS\to L^2(\Omega;\mu)$ is closed. Indeed, fix a sequence $(f_n) \subset \CS$ converging to $f\in\CH$ and such
that $(\VV f_n)$ converges to $F\in L^2(\Omega;\mu)$. 
Then, possibly passing to a subsequence, there is a subset $E\subset \Omega$ of measure zero such that, for all
$\omega\not\in E$,
\[
F(\omega)=\lim_{n\to\infty} \VV f_n(\omega) =
\lim_{n\to\infty}\EUSP{f_n}{\Psi_\omega}_{\CH} = \EUSP{f}{\Psi_\omega}_{\CH}.
\]
Then $f\in \CS$ and $F=\VV f$.  Moreover,
\[
\N{\VV f}^2_{L^2(\Omega;\mu)} = \lim_{n\to \infty} \N{\VV f_n}_{L^2(\Omega;\mu)}^2 \leq C \lim_{n\to
  \infty} \N{f_n}^2_{\CH} = C \N{f}_\CH^2 .
\]
Thus, $\VV$ is a bounded operator, and the closed graph theorem implies $\CS=\CH$, {\it i.e.} \eqref{eq:30} holds for all $f\in\CH$.
The second statement follows by defining $ \Amat := \VV^*\VV $.
\qed
\end{proof}

The simple proof of the following bound is due to A. Maurer.
\begin{lem} \label{lem:Lip}
 Let  $ \mathrm{A} , \mathrm{B} $ be self-adjoint operators on a separable Hilbert space $\CH$,
 and let $ F : \bbR \to \bbC $ be a Lipschitz continuous function with Lipschitz constant $L$.
Then 
 $$
 \| F(\mathrm{A}) - F(\mathrm{B}) \|_\HS \le L \| \mathrm{A} - \mathrm{B} \|_\HS .
 $$
\end{lem}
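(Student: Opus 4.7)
The plan is to diagonalise $\mathrm{A}$ and $\mathrm{B}$ separately and reduce the Hilbert--Schmidt norm comparison to a pointwise Lipschitz estimate on eigenvalue differences. The key observation is that, expanded through the basis-overlap coefficients between the two eigenbases, the squared HS norms of both $F(\mathrm{A}) - F(\mathrm{B})$ and $\mathrm{A} - \mathrm{B}$ decompose as weighted sums with the \emph{same} weights, so that a termwise application of the Lipschitz bound directly yields the claim.

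Concretely, first one may reduce to the case in which $\mathrm{A}$ and $\mathrm{B}$ have pure point spectrum. This is automatic in the usage of the lemma in Proposition~\ref{prop:estimation_bound}, since $\Th$ and $\Themp$ are compact; in general it follows by restricting to the spectral projections of $\mathrm{A}$ and $\mathrm{B}$ onto bounded intervals and invoking lower semicontinuity of $\|\cdot\|_\HS$ along the limit. Let then $\{e_i\}$ and $\{f_j\}$ be orthonormal eigenbases of $\CH$ diagonalising $\mathrm{A}$ and $\mathrm{B}$, with real eigenvalues $\alpha_i$ and $\beta_j$ (real by self-adjointness), and set $c_{ij} := \langle e_i, f_j\rangle_\CH$. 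Expanding $f_j = \sum_i \overline{c_{ij}} e_i$ and using $F(\mathrm{A}) e_i = F(\alpha_i) e_i$ one obtains
\begin{equation*}
(F(\mathrm{A}) - F(\mathrm{B})) f_j = \sum_i \left( F(\alpha_i) - F(\beta_j) \right) \overline{c_{ij}} \, e_i .
\end{equation*}
Parseval's identity then gives
\begin{equation*}
\| F(\mathrm{A}) - F(\mathrm{B}) \|_\HS^2 = \sum_j \| (F(\mathrm{A}) - F(\mathrm{B})) f_j \|_\CH^2 = \sum_{i,j} \left| F(\alpha_i) - F(\beta_j) \right|^2 |c_{ij}|^2 ,
\end{equation*}
and the identical computation with $F$ replaced by the identity yields $\| \mathrm{A} - \mathrm{B} \|_\HS^2 = \sum_{i,j} |\alpha_i - \beta_j|^2 |c_{ij}|^2$. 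The termwise Lipschitz bound $| F(\alpha_i) - F(\beta_j) |^2 \le L^2 |\alpha_i - \beta_j|^2$ then closes the argument.

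The only mildly delicate point is the reduction to pure point spectrum when $\mathrm{A}$ and $\mathrm{B}$ are general self-adjoint and no countable eigenbasis exists. A heavier but conceptually clean alternative would use the Birman--Solomyak double operator integral representation $F(\mathrm{A}) - F(\mathrm{B}) = \iint \frac{F(\lambda) - F(\mu)}{\lambda - \mu}\, dE_\mathrm{A}(\lambda) (\mathrm{A} - \mathrm{B})\, dE_\mathrm{B}(\mu)$ together with boundedness of the divided-difference symbol by $L$; for the applications in this paper, however, the compact-operator case treated above is fully sufficient.
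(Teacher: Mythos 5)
Your proof is correct and takes essentially the same route as the paper's: both diagonalise $\mathrm{A}$ and $\mathrm{B}$ in orthonormal eigenbases, expand the squared Hilbert--Schmidt norm over the overlap coefficients $|\langle e_i , f_j \rangle_\CH|^2$, and apply the Lipschitz bound termwise to the eigenvalue differences. Your additional care about reducing to pure point spectrum is a genuine improvement, since the paper's argument tacitly assumes countable eigenbases exist --- which is not automatic for general self-adjoint operators as stated in the lemma, though it does hold for the compact covariance operators to which the lemma is actually applied.
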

\begin{proof}
 Let $ \{e_i\}_{i\in \CI} $ and $ \{f_j\}_{j\in \CJ} $ be orthonormal bases of $\CH$
 such that $ \mathrm{A}e_i = \la_i e_i $ and $ \mathrm{B}f_j = \mu_j f_j $.
 Then
 \begin{align*}
  \| F(\mathrm{A}) - F(\mathrm{B}) \|_\HS^2
  & = \sum_{i\in \CI , j\in \CJ} | \langle (F(\mathrm{A}) - F(\mathrm{B})) e_i , f_j \rangle_\CH |^2 \\
  & = \sum_{i\in \CI , j\in \CJ} | F(\la_i) - F(\mu_j) |^2 | \langle e_i , f_j \rangle_\CH |^2 \\
  & \le L^2 \sum_{i\in \CI , j\in \CJ} | \la_i - \mu_j |^2 | \langle e_i , f_j \rangle_\CH |^2
  = L^2 \| \mathrm{A} - \mathrm{B} \|_\HS .
 \end{align*}
 \qed
\end{proof}

We include a proof of the discrete Hardy inequality 
  \cite[equation 5.2]{DP88} where we explicitly compute the Hardy constant.
\begin{lem}[Hardy inequality]\label{lem:Hardy_ineq}
Let $\LRP{b_j}_{j\geq0}$ and $\LRP{a_j}_{j\geq0}$ be two sequences such that 
\[
\SN{b_j}\leq \Big({\sum_{k\geq j} \SN{a_k}^p}\Big)^{1/p} \quad \text{for } 0<p\leq q .
\]
Then, for every $ s > 0 $, we have
\[ \sum_{j\geq0} \LRP{ 2^{js} \SN{b_j}}^q \leq \frac{2^{sq}}{{2^{sq}-1}}\sum_{j\geq0} \LRP{2^{js} \SN{a_j}}^q,\]
provided all the sums are finite.
\end{lem}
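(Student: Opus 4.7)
The plan is to reduce to a swap-of-summation calculation and estimate the resulting inner sum by a geometric series, which is the classical strategy for weighted discrete Hardy inequalities.

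I first focus on the case $p=q$, where the hypothesis becomes $\SN{b_j}^q \leq \sum_{k\ge j}\SN{a_k}^q$. Multiplying both sides by the weight $2^{jsq}$ and rewriting $2^{jsq}=2^{-(k-j)sq}\cdot 2^{ksq}$ gives the pointwise bound
\[
2^{jsq}\SN{b_j}^q \leq \sum_{k\ge j} 2^{-(k-j)sq}\,(2^{ks}\SN{a_k})^q.
\]
Summing over $j\ge 0$ and interchanging the sums by Tonelli yields
\[
\sum_{j\ge 0}(2^{js}\SN{b_j})^q \leq \sum_{k\ge 0}(2^{ks}\SN{a_k})^q\sum_{j=0}^{k}2^{-(k-j)sq}=\sum_{k\ge 0}(2^{ks}\SN{a_k})^q\sum_{m=0}^{k}2^{-msq},
\]
and the inner (partial) geometric sum is dominated by $\sum_{m\ge 0}2^{-msq}=1/(1-2^{-sq})=2^{sq}/(2^{sq}-1)$, which is exactly the claimed constant.

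For the general case $p\le q$ the plan is to reduce to the pointwise bound above. Raising the hypothesis to the $q$-th power gives $\SN{b_j}^q\le\big(\sum_{k\ge j}\SN{a_k}^p\big)^{q/p}$, and a weighted H\"older inequality applied to $\sum_{k\ge j}\SN{a_k}^p=\sum_{k\ge j}(2^{k\alpha}\SN{a_k}^p)\cdot 2^{-k\alpha}$ with exponents $q/p$ and its conjugate, tuning $\alpha=sp$, should produce a pointwise bound of the form
\[
\SN{b_j}^q \;\lesssim\; 2^{-jsq}\sum_{k\ge j}(2^{ks}\SN{a_k})^q.
\]
After multiplying by $2^{jsq}$ and applying Tonelli plus the geometric sum identity exactly as in the $p=q$ case, the constant $2^{sq}/(2^{sq}-1)$ re-emerges.

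The principal obstacle is this $p<q$ reduction. Because $q/p\ge 1$, the outer exponent in $\big(\sum_{k\ge j}\SN{a_k}^p\big)^{q/p}$ does not distribute pointwise through the sum, and a careless H\"older choice produces a divergent linear-in-$k$ factor from iterated tail sums. The point of setting $\alpha=sp$ is that the auxiliary tail $\sum_{k\ge j}2^{-k\alpha(q/p)'}$ contributes a decay factor proportional to $2^{-jsq}$, precisely cancelling the outer weight and leaving a pure geometric tail of ratio $2^{-sq}$; aligning these exponents correctly is the only delicate algebra in the argument.
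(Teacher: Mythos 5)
Your $p=q$ argument is correct and is essentially the paper's proof specialized to $\alpha=q/p=1$ (where its H\"older step degenerates); your version is in fact cleaner, since you obtain the constant $2^{sq}/(2^{sq}-1)$ directly from the full geometric series rather than via the paper's auxiliary parameter $\beta$.

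The $p<q$ reduction, however, has a genuine gap, and it sits exactly at the point you flag as ``the only delicate algebra.'' With the splitting $\sum_{k\ge j}|a_k|^p=\sum_{k\ge j}(2^{k\alpha}|a_k|^p)\cdot 2^{-k\alpha}$ and H\"older exponents $r=q/p$, $r'$, one gets
\[
\Big(\sum_{k\ge j}|a_k|^p\Big)^{q/p}\le \frac{2^{-j\alpha r}}{(1-2^{-\alpha r'})^{r/r'}}\sum_{k\ge j}2^{k\alpha r}|a_k|^{q},
\]
and to make the main factor equal $\sum_{k\ge j}(2^{ks}|a_k|)^q$ you must take $\alpha r=sq$, i.e.\ $\alpha=sp$ — your choice. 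But then the prefactor is exactly $2^{-jsq}$, so after multiplying by the weight $2^{jsq}$ \emph{no decay in $j$ survives}: the swap of summation yields $\sum_k(2^{ks}|a_k|)^q\sum_{j=0}^{k}2^{j\cdot 0}=\sum_k (k+1)(2^{ks}|a_k|)^q$, which is precisely the divergent linear-in-$k$ factor you were trying to avoid. There is no ``pure geometric tail of ratio $2^{-sq}$'' left; $\alpha=sp$ is the degenerate boundary value, not the sweet spot. The fix is the paper's: take $\alpha=\beta$ with $0<\beta<sp$ strictly, so the main H\"older factor carries only $2^{k\beta q/p}|a_k|^q$ and the post-swap sum $\sum_{j\le k}2^{j(sq-\beta q/p)}$ is a genuine geometric sum dominated by a constant times $2^{k(sq-\beta q/p)}$, recombining to $2^{ksq}|a_k|^q$. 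The price is that the resulting constant depends on $\beta$ and on $q/p$, and the clean value $2^{sq}/(2^{sq}-1)$ is only recovered in the limit $\beta\to0$ when $p=q$ (a point on which the paper's own statement is also somewhat optimistic for $p<q$). As written, your argument proves the lemma only for $p=q$.
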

\begin{proof}
Let $\alpha = \frac{q}{p}$, and let $\beta$ be such that $sp > \beta >0.$
Since $ p \le q $, we have $\N{\cdot}_{\ell_q}\leq \N{\cdot}_{\ell_p}$, hence
\begin{align*}
& \sum_{j\geq0} \LRP{ 2^{j s} \SN{b_j}}^q
\le \sum_{j\geq0} 2^{jsq}\N{\LRP{a_{j+k}}_{k\geq0}}_{\ell_q}^q \\
\le & \sum_{j\geq0} 2^{jsq}\N{\LRP{a_{j+k}}_{k\geq0}}_{\ell_p}^q
= \sum_{j\geq0} 2^{jsq} \Big({\sum_{k\geq j} 2^{-k\beta} 2^{k\beta} \SN{a_k}^p}\Big)^\alpha .
\end{align*}
Assume now $\alpha\in(1,\infty)$. Applying the H\"older inequality with $1/\alpha + 1/\alpha'=1$ we have
\begin{align*}
\sum_{k\geq j} \LRP{2^{-k\beta}} \LRP{2^{k\beta} \SN{a_k}^p}
& \le \Big({\sum_{k\geq j} 2^{-k\beta\alpha'}}\Big)^{1/\alpha'}\Big({\sum_{k\geq j} 2^{k\beta\alpha} \SN{a_k}^{\alpha p}}\Big)^{1/\alpha} \\
& =\frac{2^{\beta}}{\LRP{2^{\beta\alpha'}-1}^{1/\alpha'}} 2^{-j\beta}\Big({\sum_{k\geq j} 2^{k\beta\alpha} \SN{a_k}^{\alpha p}}\Big)^{1/\alpha} . \end{align*}
Plugging this in and using $\alpha p = q$ we get
\begin{align*}
\sum_{j\geq0} \LRP{ 2^{j s} \SN{b_j}}^q
& \le C_1 \sum_{j\geq0} 2^{jsq} 2^{-j\beta\alpha}\Big({\sum_{k\geq j} 2^{k\beta\alpha} \SN{a_k}^{\alpha p}}\Big) \\
& = C_1 \sum_{j\geq0} 2^{j(sq-\beta\alpha)}\Big({\sum_{k\geq j} 2^{k\beta\alpha} \SN{a_k}^{q}}\Big),
\end{align*}
with
\[ C_1 :=\frac{2^{\alpha\beta}}{\LRP{2^{\beta\alpha'}-1}^{\alpha/\alpha'}}.\]
Changing the order of summation we get
\begin{align*}
& \sum_{j\geq0} \LRP{ 2^{j s} \SN{b_j}}^q
\le C_1 \sum_{j\geq0} 2^{j\beta\alpha} \SN{a_j}^q \sum_{k\le j} 2^{k\LRP{sq-\beta\alpha}} \\
\le & \ C_1 C_2\sum_{j\geq0} 2^{jsq} \SN{a_j}^q
= C_1 C_2\sum_{j\geq0}\LRP{2^{js} \SN{a_j}}^q ,
\end{align*}
with
$$
C_2 := \frac{2^{sq-\beta\alpha}}{2^{sq-\beta\alpha}-1} ,
$$
since 
\[ \sum_{k\le j} 2^{k\LRP{sq-\beta\alpha}} = \frac{1}{2^{sq-\beta\alpha}-1} \LRP{2^{(j+1)(sq-\beta\alpha)} -1} \leq \frac{2^{sq-\beta\alpha}}{2^{sq-\beta\alpha}-1} {2^{j(sq-\beta\alpha)}}.\]
We have
\[ C_1C_2 = \frac{2^{sq}}{\LRP{2^{\beta\alpha'}-1}^{\alpha/\alpha'}\LRP{2^{sq-\beta\alpha}-1}}.\]
If $\alpha=1$ ($p=q$), then $\alpha'=\infty$, in which case $C_1=1$ and therefore
\[ C_1C_2 = \frac{2^{sq-\beta}}{{2^{sq-\beta}-1}} \]
for all $ \beta \in (0,sq) $. Thus, we may set $C:=\frac{2^{sq}}{{2^{sq}-1}}$.
\qed
\end{proof}

\end{appendices}

\section*{Acknowledgements}
Part of this work has been carried out at the Machine Learning Genoa (MaLGa) center, Universit\`a di Genova (IT).
Ernesto De Vito is part of the Machine Learning Genoa Center (MalGa) and he is a member of the Gruppo Nazionale per l’Analisi Matematica, la Probabilit\`a e le loro Applicazioni (GNAMPA) of the Istituto Nazionale di Alta Matematica (INdAM).
LR and SV acknowledge the financial support of the European Research Council (grant SLING 819789), the AFOSR projects FA9550-17-1-0390  and BAA-AFRL-AFOSR-2016-0007 (European Office of Aerospace Research and Development), and the EU H2020-MSCA-RISE project NoMADS - DLV-777826.
VN and ZK acknowledge the support from RCN-funded FunDaHD project No 251149/O70.

\section*{References}

\bibliography{biblio}

\begin{thebibliography}{59}
\providecommand{\natexlab}[1]{#1}
\providecommand{\url}[1]{\texttt{#1}}
\providecommand{\urlprefix}{URL }
\expandafter\ifx\csname urlstyle\endcsname\relax
  \providecommand{\doi}[1]{doi:\discretionary{}{}{}#1}\else
  \providecommand{\doi}[1]{doi:\discretionary{}{}{}\begingroup
  \urlstyle{rm}\url{#1}\endgroup}\fi
\providecommand{\bibinfo}[2]{#2}

\bibitem[{Ali et~al.(1993)Ali, Antoine, and Gazeau}]{ALI19931}
\bibinfo{author}{S.~Ali}, \bibinfo{author}{J.~Antoine},
  \bibinfo{author}{J.~Gazeau}, \bibinfo{title}{{Continuous Frames in Hilbert
  Space}}, \bibinfo{journal}{Annals of Physics}
  \bibinfo{volume}{222}~(\bibinfo{number}{1}) (\bibinfo{year}{1993})
  \bibinfo{pages}{1--37}.

\bibitem[{Ali et~al.(2013)Ali, Antoine, and Gazeau}]{ali2013coherent}
\bibinfo{author}{S.~Ali}, \bibinfo{author}{J.~Antoine},
  \bibinfo{author}{J.~Gazeau}, \bibinfo{title}{{Coherent States, Wavelets, and
  Their Generalizations}}, Theoretical and Mathematical Physics,
  \bibinfo{publisher}{Springer New York}, \bibinfo{year}{2013}.

\bibitem[{Belkin and Niyogi(2007)}]{NIPS2006_5848ad95}
\bibinfo{author}{M.~Belkin}, \bibinfo{author}{P.~Niyogi},
  \bibinfo{title}{{Convergence of Laplacian Eigenmaps}},
  \bibinfo{journal}{Advances in Neural Information Processing Systems}
  \bibinfo{volume}{19} (\bibinfo{year}{2007}) \bibinfo{pages}{129--136}.

\bibitem[{Belkin and Niyogi(2008)}]{BELKIN20081289}
\bibinfo{author}{M.~Belkin}, \bibinfo{author}{P.~Niyogi},
  \bibinfo{title}{{Towards a theoretical foundation for Laplacian-based
  manifold methods}}, \bibinfo{journal}{Journal of Computer and System
  Sciences} \bibinfo{volume}{74}~(\bibinfo{number}{8}) (\bibinfo{year}{2008})
  \bibinfo{pages}{1289--1308}.

\bibitem[{Binev et~al.(2005)Binev, Cohen, Dahmen, Devore, and
  Temlyakov}]{binev2005}
\bibinfo{author}{P.~Binev}, \bibinfo{author}{A.~Cohen},
  \bibinfo{author}{W.~Dahmen}, \bibinfo{author}{R.~A. Devore},
  \bibinfo{author}{V.~N. Temlyakov}, \bibinfo{title}{{Universal Algorithms for
  Learning Theory Part {I} : Piecewise Constant Functions}},
  \bibinfo{journal}{Journal of Machine Learning Research} \bibinfo{volume}{6}
  (\bibinfo{year}{2005}) \bibinfo{pages}{1297--1321}.

\bibitem[{Blanchard and M\"ucke(2018)}]{blmu18}
\bibinfo{author}{G.~Blanchard}, \bibinfo{author}{N.~M\"ucke},
  \bibinfo{title}{Optimal Rates for Regularization of Statistical Inverse
  Learning Problems}, \bibinfo{journal}{Foundations of Computational
  Mathematics} \bibinfo{volume}{18}~(\bibinfo{number}{4})
  (\bibinfo{year}{2018}) \bibinfo{pages}{971--1013}.

\bibitem[{Camoriano et~al.(2016)Camoriano, Angles, Rudi, and
  Rosasco}]{pmlr-v51-camoriano16}
\bibinfo{author}{R.~Camoriano}, \bibinfo{author}{T.~Angles},
  \bibinfo{author}{A.~Rudi}, \bibinfo{author}{L.~Rosasco},
  \bibinfo{title}{{NYTRO: When Subsampling Meets Early Stopping}},
  \bibinfo{journal}{Proceedings of the 19th International Conference on
  Artificial Intelligence and Statistics} \bibinfo{volume}{51}
  (\bibinfo{year}{2016}) \bibinfo{pages}{1403--1411}.

\bibitem[{Caponnetto and De~Vito(2007)}]{caponnetto-devito}
\bibinfo{author}{A.~Caponnetto}, \bibinfo{author}{E.~De~Vito},
  \bibinfo{title}{{Optimal Rates for the Regularized Least-Squares Algorithm}},
  \bibinfo{journal}{Foundations of Computational Mathematics}
  \bibinfo{volume}{7} (\bibinfo{year}{2007}) \bibinfo{pages}{331--368}.

\bibitem[{Chen et~al.(2017)Chen, Avron, and Sindhwani}]{JMLR:v18:15-376}
\bibinfo{author}{J.~Chen}, \bibinfo{author}{H.~Avron},
  \bibinfo{author}{V.~Sindhwani}, \bibinfo{title}{{Hierarchically Compositional
  Kernels for Scalable Nonparametric Learning}}, \bibinfo{journal}{Journal of
  Machine Learning Research} \bibinfo{volume}{18}~(\bibinfo{number}{66})
  (\bibinfo{year}{2017}) \bibinfo{pages}{1--42}.

\bibitem[{Chui(1992)}]{chui92}
\bibinfo{author}{C.~K. Chui}, \bibinfo{title}{An introduction to Wavelets},
  vol.~\bibinfo{volume}{1} of \emph{\bibinfo{series}{Wavelet Analysis and its
  Applications}}, \bibinfo{publisher}{Academic Press, Boston, MA},
  \bibinfo{year}{1992}.

\bibitem[{Chung(1997)}]{Chung1997}
\bibinfo{author}{F.~R.~K. Chung}, \bibinfo{title}{Spectral Graph Theory},
  \bibinfo{publisher}{American Mathematical Society}, \bibinfo{year}{1997}.

\bibitem[{Coifman and Maggioni(2006)}]{CM06}
\bibinfo{author}{R.~R. Coifman}, \bibinfo{author}{M.~Maggioni},
  \bibinfo{title}{Diffusion wavelets}, \bibinfo{journal}{Applied and
  Computational Harmonic Analysis} \bibinfo{volume}{21} (\bibinfo{year}{2006})
  \bibinfo{pages}{53--94}.

\bibitem[{Coulhon et~al.(2012)Coulhon, Kerkyacharian, and Petrushev}]{TKP12}
\bibinfo{author}{T.~Coulhon}, \bibinfo{author}{G.~Kerkyacharian},
  \bibinfo{author}{P.~Petrushev}, \bibinfo{title}{{Heat Kernel Generated Frames
  in the Setting of Dirichlet Spaces}}, \bibinfo{journal}{Journal of Fourier
  Analysis and Applications} \bibinfo{volume}{18}~(\bibinfo{number}{5})
  (\bibinfo{year}{2012}) \bibinfo{pages}{995--1066}.

\bibitem[{Daubechies(1992)}]{daubechies1992ten}
\bibinfo{author}{I.~Daubechies}, \bibinfo{title}{{Ten Lectures on Wavelets}},
  CBMS-NSF Regional Conference Series in Applied Mathematics,
  \bibinfo{publisher}{Society for Industrial and Applied Mathematics},
  \bibinfo{year}{1992}.

\bibitem[{Daubechies et~al.(1986)Daubechies, Grossmann, and Meyer}]{DGM}
\bibinfo{author}{I.~Daubechies}, \bibinfo{author}{A.~Grossmann},
  \bibinfo{author}{Y.~Meyer}, \bibinfo{title}{{Painless Nonorthogonal
  Expansions}}, \bibinfo{journal}{Journal of Mathematical Physics}
  \bibinfo{volume}{27} (\bibinfo{year}{1986}) \bibinfo{pages}{1271--1283}.

\bibitem[{DeVore and Popov(1988)}]{DP88}
\bibinfo{author}{R.~A. DeVore}, \bibinfo{author}{V.~A. Popov},
  \bibinfo{title}{{Interpolation of Besov spaces}},
  \bibinfo{journal}{Transactions of the American Mathematical Society}
  \bibinfo{volume}{305}~(\bibinfo{number}{1}) (\bibinfo{year}{1988})
  \bibinfo{pages}{397--414}.

\bibitem[{Ding et~al.(2017)Ding, Kondor, and
  Eskreis-Winkler}]{NIPS2017_850af92f}
\bibinfo{author}{Y.~Ding}, \bibinfo{author}{R.~Kondor},
  \bibinfo{author}{J.~Eskreis-Winkler}, \bibinfo{title}{{Multiresolution Kernel
  Approximation for Gaussian Process Regression}}, \bibinfo{journal}{Advances
  in Neural Information Processing Systems} \bibinfo{volume}{30}
  (\bibinfo{year}{2017}) \bibinfo{pages}{3740--3748}.

\bibitem[{Dong(2017)}]{DONG2017452}
\bibinfo{author}{B.~Dong}, \bibinfo{title}{Sparse representation on graphs by
  tight wavelet frames and applications}, \bibinfo{journal}{Applied and
  Computational Harmonic Analysis} \bibinfo{volume}{42}~(\bibinfo{number}{3})
  (\bibinfo{year}{2017}) \bibinfo{pages}{452--479}.

\bibitem[{Engl et~al.(1996)Engl, Hanke, and Neubauer}]{enhane96}
\bibinfo{author}{H.~W. Engl}, \bibinfo{author}{M.~Hanke},
  \bibinfo{author}{A.~Neubauer}, \bibinfo{title}{Regularization of Inverse
  Problems}, vol. \bibinfo{volume}{375} of \emph{\bibinfo{series}{Mathematics
  and its Applications}}, \bibinfo{publisher}{Kluwer Academic Publishers Group,
  Dordrecht}, \bibinfo{year}{1996}.

\bibitem[{{F}eichtinger et~al.(2016){F}eichtinger, {F}{\"u}hr, and
  {P}esenson}]{fefupe16}
\bibinfo{author}{H.~G. {F}eichtinger}, \bibinfo{author}{H.~{F}{\"u}hr},
  \bibinfo{author}{I.~{P}esenson}, \bibinfo{title}{{Geometric Space-Frequency
  Analysis on Manifolds}}, \bibinfo{journal}{Journal of Fourier Analysis and
  Applications} \bibinfo{volume}{22}~(\bibinfo{number}{6})
  (\bibinfo{year}{2016}) \bibinfo{pages}{1294--1355}.

\bibitem[{Fornasier and Rauhut(2005)}]{FR05}
\bibinfo{author}{M.~Fornasier}, \bibinfo{author}{H.~Rauhut},
  \bibinfo{title}{{Continuous Frames, Function Spaces, and the Discretization
  Problem}}, \bibinfo{journal}{Journal of Fourier Analysis and Applications}
  \bibinfo{volume}{11}~(\bibinfo{number}{3}) (\bibinfo{year}{2005})
  \bibinfo{pages}{245--287}.

\bibitem[{Freeman and Speegle(2019)}]{FS17}
\bibinfo{author}{D.~Freeman}, \bibinfo{author}{D.~Speegle}, \bibinfo{title}{The
  discretization problem for continuous frames}, \bibinfo{journal}{Advances in
  Mathematics} \bibinfo{volume}{345} (\bibinfo{year}{2019})
  \bibinfo{pages}{784--813}.

\bibitem[{F\"uhr(2005)}]{f2005abstract}
\bibinfo{author}{H.~F\"uhr}, \bibinfo{title}{{Abstract Harmonic Analysis of
  Continuous Wavelet Transforms}}, Lecture Notes in Mathematics,
  \bibinfo{publisher}{Springer}, \bibinfo{year}{2005}.

\bibitem[{F\"uhr and Gr\"ochenig(2007)}]{FG07}
\bibinfo{author}{H.~F\"uhr}, \bibinfo{author}{K.~Gr\"ochenig},
  \bibinfo{title}{{Sampling theorems on locally compact groups from oscillation
  estimates}}, \bibinfo{journal}{Mathematische Zeitschrift}
  \bibinfo{volume}{255}~(\bibinfo{number}{1}) (\bibinfo{year}{2007})
  \bibinfo{pages}{177–194}.

\bibitem[{Garc\'{\i}a~Trillos et~al.(2020)Garc\'{\i}a~Trillos, Gerlach, Hein,
  and Slep\v{c}ev}]{MR4130541}
\bibinfo{author}{N.~Garc\'{\i}a~Trillos}, \bibinfo{author}{M.~Gerlach},
  \bibinfo{author}{M.~Hein}, \bibinfo{author}{D.~Slep\v{c}ev},
  \bibinfo{title}{{Error estimates for spectral convergence of the graph
  {L}aplacian on random geometric graphs toward the {L}aplace-{B}eltrami
  operator}}, \bibinfo{journal}{Foundations of Computational Mathematics}
  \bibinfo{volume}{20}~(\bibinfo{number}{4}) (\bibinfo{year}{2020})
  \bibinfo{pages}{827--887}.

\bibitem[{Geller and Pesenson(2011)}]{GP11}
\bibinfo{author}{D.~Geller}, \bibinfo{author}{I.~Z. Pesenson},
  \bibinfo{title}{{Band-Limited Localized Parseval Frames and Besov Spaces on
  Compact Homogeneous Manifolds}}, \bibinfo{journal}{Journal of Geometric
  Analysis} \bibinfo{volume}{21} (\bibinfo{year}{2011})
  \bibinfo{pages}{334--371}.

\bibitem[{{Gin\'{e}, E. and Koltchinskii, V.}(2006)}]{MR2387773}
\bibinfo{author}{{Gin\'{e}, E. and Koltchinskii, V.}},
  \bibinfo{title}{{Empirical graph {L}aplacian approximation of
  {L}aplace-{B}eltrami operators: large sample results}},
  \bibinfo{journal}{High Dimensional Probability} \bibinfo{volume}{51}
  (\bibinfo{year}{2006}) \bibinfo{pages}{238--259}.

\bibitem[{G{\"o}bel et~al.(2018)G{\"o}bel, Blanchard, and von Luxburg}]{GBvL17}
\bibinfo{author}{F.~G{\"o}bel}, \bibinfo{author}{G.~Blanchard},
  \bibinfo{author}{U.~von Luxburg}, \bibinfo{title}{{Construction of Tight
  Frames on Graphs and Application to Denoising}}, in: \bibinfo{editor}{W.~K.
  H{\"a}rdle}, \bibinfo{editor}{H.~H.-S. Lu}, \bibinfo{editor}{X.~Shen} (Eds.),
  \bibinfo{booktitle}{Handbook of Big Data Analytics},
  chap.~\bibinfo{chapter}{20}, \bibinfo{publisher}{Springer},
  \bibinfo{pages}{503--522}, \bibinfo{year}{2018}.

\bibitem[{Gr{\"o}chenig(1991)}]{Gr1991}
\bibinfo{author}{K.~Gr{\"o}chenig}, \bibinfo{title}{{Describing functions:
  Atomic decompositions versus frames}}, \bibinfo{journal}{Monatshefte f{\"u}r
  Mathematik} \bibinfo{volume}{112} (\bibinfo{year}{1991})
  \bibinfo{pages}{1--42}.

\bibitem[{Grossmann and Morlet(1984)}]{GM84}
\bibinfo{author}{A.~Grossmann}, \bibinfo{author}{J.~Morlet},
  \bibinfo{title}{{Decomposition of Hardy Functions into Square Integrable
  Wavelets of Constant Shape}}, \bibinfo{journal}{SIAM Journal on Mathematical
  Analysis} \bibinfo{volume}{15} (\bibinfo{year}{1984})
  \bibinfo{pages}{723--736}.

\bibitem[{Haar(1910)}]{Haar1910}
\bibinfo{author}{A.~Haar}, \bibinfo{title}{{Zur Theorie der orthogonalen
  Funktionensysteme. (Erste Mitteilung)}}, \bibinfo{journal}{Mathematische
  Annalen} \bibinfo{volume}{69} (\bibinfo{year}{1910})
  \bibinfo{pages}{331--371}.

\bibitem[{Haiko et~al.(2011)Haiko, Martinsson, and Tropp}]{HMT11}
\bibinfo{author}{N.~Haiko}, \bibinfo{author}{P.~G. Martinsson},
  \bibinfo{author}{J.~A. Tropp}, \bibinfo{title}{{Finding Structure with
  Randomness: Probabilistic Algorithms for Constructing Approximate Matrix
  Decompositions}}, \bibinfo{journal}{SIAM Review}
  \bibinfo{volume}{53}~(\bibinfo{number}{42}) (\bibinfo{year}{2011})
  \bibinfo{pages}{217--288}.

\bibitem[{Hammond et~al.(2011)Hammond, Vandergheynst, and Gribonval}]{HVG11}
\bibinfo{author}{D.~K. Hammond}, \bibinfo{author}{P.~Vandergheynst},
  \bibinfo{author}{R.~Gribonval}, \bibinfo{title}{Wavelets on graphs via
  spectral graph theory.}, \bibinfo{journal}{Applied and Computational Harmonic
  Analysis} \bibinfo{volume}{30}~(\bibinfo{number}{2}) (\bibinfo{year}{2011})
  \bibinfo{pages}{129--150}.

\bibitem[{Hein et~al.(2005)Hein, Audibert, and von Luxburg}]{3213}
\bibinfo{author}{M.~Hein}, \bibinfo{author}{J.~Audibert},
  \bibinfo{author}{U.~von Luxburg}, \bibinfo{title}{{From Graphs to Manifolds -
  Weak and Strong Pointwise Consistency of Graph Laplacians}},
  \bibinfo{journal}{Proceedings of the 18th Conference on Learning Theory}
  (\bibinfo{year}{2005}) \bibinfo{pages}{470--485}.

\bibitem[{{Kereta} et~al.(2019){Kereta}, {Vigogna}, {Naumova}, {Rosasco}, and
  {De Vito}}]{9030825}
\bibinfo{author}{Z.~{Kereta}}, \bibinfo{author}{S.~{Vigogna}},
  \bibinfo{author}{V.~{Naumova}}, \bibinfo{author}{L.~{Rosasco}},
  \bibinfo{author}{E.~{De Vito}}, \bibinfo{title}{{Monte Carlo wavelets: a
  randomized approach to frame discretization}},
  \bibinfo{journal}{International conference on Sampling Theory and
  Applications} \bibinfo{volume}{13}.

\bibitem[{Koltchinskii and Lounici(2017)}]{kolo17}
\bibinfo{author}{V.~Koltchinskii}, \bibinfo{author}{K.~Lounici},
  \bibinfo{title}{Concentration inequalities and moment bounds for sample
  covariance operators}, \bibinfo{journal}{Bernoulli}
  \bibinfo{volume}{23}~(\bibinfo{number}{1}) (\bibinfo{year}{2017})
  \bibinfo{pages}{110--133}.

\bibitem[{Maggioni and Mhaskar(2008)}]{MM08}
\bibinfo{author}{M.~Maggioni}, \bibinfo{author}{H.~N. Mhaskar},
  \bibinfo{title}{Diffusion {polynomial frames on metric measure spaces}},
  \bibinfo{journal}{Applied and Computational Harmonic Analysis}
  \bibinfo{volume}{24}~(\bibinfo{number}{3}) (\bibinfo{year}{2008})
  \bibinfo{pages}{329--353}.

\bibitem[{Mallat(1999)}]{mallat1999wavelet}
\bibinfo{author}{S.~Mallat}, \bibinfo{title}{{A Wavelet Tour of Signal
  Processing}}, Wavelet Tour of Signal Processing, \bibinfo{publisher}{Elsevier
  Science}, \bibinfo{year}{1999}.

\bibitem[{Martinsson and Tropp(2020)}]{RanLinAlg}
\bibinfo{author}{P.-G. Martinsson}, \bibinfo{author}{J.~A. Tropp},
  \bibinfo{title}{{Randomized Numerical Linear Algebra: Foundations \&
  Algorithms}}, \bibinfo{howpublished}{arXiv:2002.01387}, \bibinfo{year}{2020}.

\bibitem[{Meyer(1992)}]{mey92}
\bibinfo{author}{Y.~Meyer}, \bibinfo{title}{Wavelets and Operators},
  vol.~\bibinfo{volume}{37} of \emph{\bibinfo{series}{Cambridge Studies in
  Advanced Mathematics}}, \bibinfo{publisher}{Cambridge University Press},
  \bibinfo{year}{1992}.

\bibitem[{Mhaskar(2010)}]{M10}
\bibinfo{author}{H.~N. Mhaskar}, \bibinfo{title}{{Eignets for function
  approximation on manifolds}}, \bibinfo{journal}{Applied and Computational
  Harmonic Analysis} \bibinfo{volume}{29}~(\bibinfo{number}{1})
  (\bibinfo{year}{2010}) \bibinfo{pages}{63--87}.

\bibitem[{Pagliana and Rosasco(2019)}]{Pagliana2019ImplicitRO}
\bibinfo{author}{N.~Pagliana}, \bibinfo{author}{L.~Rosasco},
  \bibinfo{title}{{Implicit Regularization of Accelerated Methods in Hilbert
  Spaces}}, \bibinfo{journal}{Advances in Neural Information Processing
  Systems} \bibinfo{volume}{32} (\bibinfo{year}{2019})
  \bibinfo{pages}{14481--14491}.

\bibitem[{Pesenson et~al.(2017)Pesenson, Le~Gia, Mayeli, Mhaskar, and
  Zhou}]{MR3642989}
\bibinfo{editor}{I.~Pesenson}, \bibinfo{editor}{Q.~T. Le~Gia},
  \bibinfo{editor}{A.~Mayeli}, \bibinfo{editor}{H.~Mhaskar},
  \bibinfo{editor}{D.-X. Zhou} (Eds.), \bibinfo{title}{Frames and Other Bases
  in Abstract and Function Spaces}, Applied and Numerical Harmonic Analysis,
  \bibinfo{publisher}{Birkh\"{a}user Basel}, \bibinfo{note}{novel Methods in
  Harmonic Analysis. Vol. 1}, \bibinfo{year}{2017}.

\bibitem[{Pietsch(1981)}]{P81}
\bibinfo{author}{A.~Pietsch}, \bibinfo{title}{Approximation spaces},
  \bibinfo{journal}{Journal of Approximation Theory} \bibinfo{volume}{32}
  (\bibinfo{year}{1981}) \bibinfo{pages}{115--134}.

\bibitem[{Rosasco et~al.(2010)Rosasco, Belkin, and De~Vito}]{RBD10}
\bibinfo{author}{L.~Rosasco}, \bibinfo{author}{M.~Belkin},
  \bibinfo{author}{E.~De~Vito}, \bibinfo{title}{{On Learning with Integral
  Operators}}, \bibinfo{journal}{Journal of Machine Learning Research}
  \bibinfo{volume}{11} (\bibinfo{year}{2010}) \bibinfo{pages}{905--934}.

\bibitem[{Rudi et~al.(2015)Rudi, Camoriano, and Rosasco}]{NIPS2015_5936}
\bibinfo{author}{A.~Rudi}, \bibinfo{author}{R.~Camoriano},
  \bibinfo{author}{L.~Rosasco}, \bibinfo{title}{{Less is More: Nystr\"{o}m
  Computational Regularization}}, \bibinfo{journal}{Advances in Neural
  Information Processing Systems} \bibinfo{volume}{28} (\bibinfo{year}{2015})
  \bibinfo{pages}{1657--1665}.

\bibitem[{Rudi et~al.(2017)Rudi, Carratino, and Rosasco}]{NIPS2017_05546b0e}
\bibinfo{author}{A.~Rudi}, \bibinfo{author}{L.~Carratino},
  \bibinfo{author}{L.~Rosasco}, \bibinfo{title}{{FALKON: An Optimal Large Scale
  Kernel Method}}, \bibinfo{journal}{Advances in Neural Information Processing
  Systems} \bibinfo{volume}{30} (\bibinfo{year}{2017})
  \bibinfo{pages}{3888--3898}.

\bibitem[{Rudi and Rosasco(2017)}]{NIPS2017_61b1fb3f}
\bibinfo{author}{A.~Rudi}, \bibinfo{author}{L.~Rosasco},
  \bibinfo{title}{{Generalization Properties of Learning with Random
  Features}}, \bibinfo{journal}{Advances in Neural Information Processing
  Systems} \bibinfo{volume}{30} (\bibinfo{year}{2017})
  \bibinfo{pages}{3215--3225}.

\bibitem[{{Sch{\"a}fer} et~al.(2017){Sch{\"a}fer}, {Sullivan}, and
  {Owhadi}}]{2017arXiv170602205S}
\bibinfo{author}{F.~{Sch{\"a}fer}}, \bibinfo{author}{T.~J. {Sullivan}},
  \bibinfo{author}{H.~{Owhadi}}, \bibinfo{title}{{Compression, inversion, and
  approximate PCA of dense kernel matrices at near-linear computational
  complexity}}, \bibinfo{howpublished}{arXiv:1706.02205}, \bibinfo{year}{2017}.

\bibitem[{{Schr{\"o}dinger}(1926)}]{1926NW.....14..664S}
\bibinfo{author}{E.~{Schr{\"o}dinger}}, \bibinfo{title}{Der stetige
  {\"U}bergang von der Mikro- zur Makromechanik},
  \bibinfo{journal}{Naturwissenschaften}
  \bibinfo{volume}{14}~(\bibinfo{number}{28}) (\bibinfo{year}{1926})
  \bibinfo{pages}{664--666}.

\bibitem[{Shishkin et~al.(2019)Shishkin, Shalaginov, and Bopardikar}]{SSB19}
\bibinfo{author}{S.~L. Shishkin}, \bibinfo{author}{A.~Shalaginov},
  \bibinfo{author}{S.~D. Bopardikar}, \bibinfo{title}{{Fast approximate
  truncated SVD}}, \bibinfo{journal}{Numerical Linear Algebra with
  Applications} \bibinfo{volume}{26}~(\bibinfo{number}{4})
  (\bibinfo{year}{2019}) \bibinfo{pages}{e2246}.

\bibitem[{Singer(2006)}]{SINGER2006128}
\bibinfo{author}{A.~Singer}, \bibinfo{title}{{From graph to manifold Laplacian:
  The convergence rate}}, \bibinfo{journal}{Applied and Computational Harmonic
  Analysis} \bibinfo{volume}{21}~(\bibinfo{number}{1}) (\bibinfo{year}{2006})
  \bibinfo{pages}{128--134}.

\bibitem[{Singer and Wu(2017)}]{singer2017}
\bibinfo{author}{A.~Singer}, \bibinfo{author}{H.-T. Wu},
  \bibinfo{title}{{Spectral convergence of the connection Laplacian from random
  samples}}, \bibinfo{journal}{Information and Inference: A Journal of the IMA}
  \bibinfo{volume}{6} (\bibinfo{year}{2017}) \bibinfo{pages}{58--123}.

\bibitem[{Ting et~al.(2010)Ting, Huang, and Jordan}]{10.5555/3104322.3104459}
\bibinfo{author}{D.~Ting}, \bibinfo{author}{L.~Huang}, \bibinfo{author}{M.~I.
  Jordan}, \bibinfo{title}{{An Analysis of the Convergence of Graph
  Laplacians}}, \bibinfo{journal}{Proceedings of the 27th International
  Conference on International Conference on Machine Learning}
  (\bibinfo{year}{2010}) \bibinfo{pages}{1079–1086}.

\bibitem[{Triebel(1992)}]{triebel1992}
\bibinfo{author}{H.~Triebel}, \bibinfo{title}{Theory of {F}unction {S}paces
  {II}}, Monographs in Mathematics, \bibinfo{publisher}{Birkh{\"a}user Verlag},
  \bibinfo{year}{1992}.

\bibitem[{von Luxburg et~al.(2008)von Luxburg, Belkin, and
  Bousquet}]{vonlux2008}
\bibinfo{author}{U.~von Luxburg}, \bibinfo{author}{M.~Belkin},
  \bibinfo{author}{O.~Bousquet}, \bibinfo{title}{{Consistency of spectral
  clustering}}, \bibinfo{journal}{The Annals of Statistics}
  \bibinfo{volume}{36}~(\bibinfo{number}{2}) (\bibinfo{year}{2008})
  \bibinfo{pages}{555--586}.

\bibitem[{Wang and Zhuang(2020)}]{WANG202064}
\bibinfo{author}{Y.~G. Wang}, \bibinfo{author}{X.~Zhuang},
  \bibinfo{title}{Tight framelets and fast framelet filter bank transforms on
  manifolds}, \bibinfo{journal}{Applied and Computational Harmonic Analysis}
  \bibinfo{volume}{48}~(\bibinfo{number}{1}) (\bibinfo{year}{2020})
  \bibinfo{pages}{64--95}.

\bibitem[{Wendland(1995)}]{wendland95}
\bibinfo{author}{H.~Wendland}, \bibinfo{title}{Piecewise polynomial, positive
  definite and compactly supported radial functions of minimal degree},
  \bibinfo{journal}{Advances in Computational Mathematics} \bibinfo{volume}{4}
  (\bibinfo{year}{1995}) \bibinfo{pages}{389--396}.

\bibitem[{Yao et~al.(2007)Yao, Rosasco, and Caponnetto}]{earlystopping}
\bibinfo{author}{Y.~Yao}, \bibinfo{author}{L.~Rosasco},
  \bibinfo{author}{A.~Caponnetto}, \bibinfo{title}{{On Early Stopping in
  Gradient Descent Learning}}, \bibinfo{journal}{Constructive Approximation}
  \bibinfo{volume}{26} (\bibinfo{year}{2007}) \bibinfo{pages}{289--315}.

\end{thebibliography}

\end{document}